\newtheorem{thm}{Theorem}
\newtheorem{definition}{Definition}
\newtheorem{lem}{Lemma}
\newtheorem{prop}{Proposition}
\newtheorem{cor}{Corollary}
\def\indi{\mathrm{\hspace{0.2em}l\hspace{-0.55em}1}}
\def\Re{{\rm Re}}
\newcommand{\bs}[1]{\ensuremath{\boldsymbol{#1}}}
\newcommand{\sss}{\scriptscriptstyle}
\begin{document}
\title{Multiple drawing multi-colour urns by stochastic approximation\thanks{This new arxiv version (v6) corrects a mistake that we discovered in the previous versions of this paper (v1-5). The mistake was in Theorem 1$(a)$ and in the last sentence of Theorem~4. In this new version, Theorem 1$(a)$ has been corrected, and Theorem 4 has been deleted.}}
\author{
Nabil Lasmar\thanks{D\'epartement des Math\'ematiques, Institut Pr\'eparatoire aux \'Etudes d'Ing\'enieur, Monastir, Tunisia (\texttt{nabillasmar@yahoo.fr}).}, 
C\'ecile Mailler\thanks{University of Bath, Department of Mathematical Sciences, Claverton Down, BA2 7AY Bath, UK (\texttt{c.mailler@bath.ac.uk}).}~ and 
Olfa Selmi\thanks{D\'epartement des Math\'ematiques, Facult\'e des Sciences de Monastir, Monastir, Tunisia (\texttt{selmiolfa3@yahoo.fr}).}}
\maketitle

\begin{abstract}
A classical P\'olya urn scheme is a Markov process whose evolution is encoded by a replacement matrix $(R_{i,j})_{1\leq i,j\leq d}$.
At every discrete time-step, we draw a ball uniformly at random, denote its colour $c$, and replace it in the urn together with $R_{c,j}$ balls of colour $j$ (for all $1\leq j\leq d$).

We study multi-drawing P\'olya urns, 
where the replacement rule depends on the random drawing of a set of $m$ balls from the urn (with or without replacement). 
Many particular examples of this situation have been studied in the literature, but the only general results are by 
Kuba \& Mahmoud (ArXiv:1503.09069 and 1509.09053).
These authors prove second order asymptotic results in the $2$-colour case, 
under the so-called {\it balance} and {\it affinity} assumptions,
the latter being somewhat artificial.

The main idea of this work is to apply stochastic approximation methods to this problem, 
which enables us to prove analogous results to Kuba \& Mahmoud, 
but without the artificial {\it affinity} hypothesis,
and, for the first time in the literature, in the $d$-colour case ($d\geq 3$).
We also give some partial results in the two-colour non-balanced case,
the novelty here being that the only results for this case currently in the literature are for particular examples.
\end{abstract}

\section{Introduction}

\subsection{Classical P\'olya urns}

P{\'o}lya urn schemes are the simplest example of stochastic processes with reinforcement.
Understanding these objects is thus the first step in understanding many more intricate models 
such as reinforced random walks, preferential attachment networks, interacting urn models, etc.
Furthermore, reinforced stochastic processes appear in a wide range of applications; 
in biology (e.g. ants walks, reinforced branching processes), 
in finance and clinical trials, and in computer science 
(preferential attachment networks are used to model the complex networks such as 
the internet, the World Wide Web, or social networks). 
We refer the reader to Pemantle's survey~\cite{Pemantle} 
for an overview on both mathematics and applications of reinforced processes.

The classical P{\'o}lya urn, first introduced by Eggenberger \& P{\'o}lya~\cite{EP23}, 
is described as follows: 
an urn contains initially one white ball and one black ball.
At each discrete time step, one picks a ball uniformly at random 
and replaces it in the urn together with another ball of the same colour.
Many generalisations of this model have been studied in the literature, namely 
urns with more than two colours, or with different, possibly random, replacement rules.

The methods used to study P\'olya urns are quite varied: 
since the seminal work of Athreya \& Karlin~\cite{AK}, 
one successful approach is to embed the urn process in continuous time using exponential clocks 
and then apply martingale arguments.
This method can be very effective - see, for example Janson~\cite{Janson04,Janson06}; 
the main difficulty is then to pull the results back into the discrete-time framework.

Since the work of Flajolet, Dumas \& Puyhaubert~\cite{FDP}, 
analytic combinatorics have been used to study P{\'o}lya's urn schemes 
(see Morcrette~\cite{Basile} and Morcrette \& Mahmoud~\cite{MM}).
The main advantage of this method is that, when successful, 
it gives results for fixed finite time and not only asymptotically 
when time goes to infinity as the embedding-in-continuous-time method does.
The major drawback is that this method is often non-tractable 
as it relies on solving a non-linear differential system.

Finally, stochastic approximation (or stochastic algorithms) provides a powerful toolbox to prove strong laws of large numbers 
and central limit theorems for the composition of a P{\'o}lya urn.
This method relies on discrete time martingale methods; 
a good introduction to stochastic approximation is the book by Duflo~\cite{Duflo}. 
The literature on stochastic algorithms is very wide; 
in the urn context, stochastic approximation has already been used, 
for example by Laruelle \& Pag\`es~\cite{LP} 
for the study of P\'olya urns with random replacement rules.
In this article, we apply the stochastic approximation results of Zhang~\cite{Zhang16} and Renlund~\cite{Renlund} 
to a model of multi-drawing P\'olya urns.

\subsection{Multi-drawing P\'olya urns}

In this article, we focus on the multi-drawing generalisation of P{\'o}lya urns:
instead of choosing the replacement rule according to the random drawing of one ball in the urn, 
one picks at random a handful of balls (more precisely a fixed number $m$ of balls), 
and the replacement rule then depends on the colours of those $m$ balls.
This model has numerous applications, such as
degrees in increasing trees and preferential attachment networks (see Kuba \& Sulzbach~\cite{KS}), 
leaves in random circuits (see~\cite{KS}) 
and tournaments {\it \`a la} {\it rock-paper-scissors} (see Laslier \& Laslier~\cite{LL}).

Many particular examples have been studied in the literature, mostly in the two-colour case:
see, for example, Chen \& Wei~\cite{CW}, Chen \& Kuba~\cite{CK} and Kuba, Mahmoud \& Panholzer~\cite{KMP}
and Aguech, Lasmar \& Selmi~\cite{ALS}.

In recent work, Kuba \& Mahmoud~\cite{KM} and Kuba \& Sulzbach~\cite{KS}
considered the general case of two-colour P{\'o}lya urns with multiple drawing 
and proved third-order asymptotics of the composition of the urn under two hypothesis. 
The standard {\it balance} hypothesis makes the total number in the urn deterministic. 
The {\it affinity} hypothesis is the assumption that, 
if one denotes by $W_n$ the number of white balls in the urn at time~$n$,
then there exists two deterministic sequences $(\alpha_n)_{n\geq 0}$ and $(\beta_n)_{n\geq 0}$ such that, 
for all integers $n$, $\mathbb E[W_{n+1}|\mathcal F_n] = \alpha_n W_n +\beta_n$.

\subsection{The main contributions of this paper}
The main idea of this paper is to apply the stochastic approximation methods to the multi-drawing problem.
Our main motivation for doing this was the successful application 
by Laruelle \& Pag\`es~\cite{LP} of these techniques to the
classical P\'olya urn model with random replacement matrices.

Using these methods we prove results in the $d$-colour ($d\geq 2$), multi-drawing case.
In the case $d\geq 3$, these are the first general results in the literature.
In the case $d\geq 2$, our results are analogous to Kuba \& Mahmoud~\cite{KM}, 
but importantly do not require the somewhat artificial affinity hypothesis.

Furthermore, we prove partial results on the two-colour {\it non-balanced} case; 
these are the first results in the literature for this situation,
apart from the study of particular examples in, for example,~\cite{ALS}.


\subsection{Definition of the model and main assumptions}

The model we study is defined as follows. An urn contains balls of
$d\geq 2$ different colours. At time 0, the urn contains
$U_{0,i}\geq 0$ balls of colour $i$, for all $1\leq i\leq d$. We
assume that the urn is originally non empty, namely $\sum_{i=1}^d U_{0,i}>0$. 
We fix an integer $m\geq 1$ and a
replacement rule $R\,:\,\Sigma^{(d)}_m \to \mathbb Z^d$, where
\[\Sigma^{(d)}_m = \{(v_1, \ldots, v_d)\in \mathbb N^d \,:\, \sum_{i=1}^d v_i = m\}.\]
We denote by $R_1, \ldots, R_d$ the coefficient functions of $R$. At each
(discrete) time step~$n\geq 1$, 
we draw $m$ balls in the urn {\it with or without
replacement}, and denote by $\xi_{n,i}$ the number of balls of
colour $i$ among those $m$ balls. Let $\xi_n = (\xi_{n,1}, \ldots,
\xi_{n,d})$. Conditionally on $\xi_n = v$, we then add into the urn
$R_i(v)$ balls of type $i$ into the urn, for all $1\leq i\leq d$.
More precisely, we have $U_n = U_{n-1} + R(\xi_n)$, for all $n\geq 1$, 
where $U_{n,i}$ is the number of balls of colour $i$ in the urn at time $n$, 
and $U_n = (U_{n,1}, \ldots, U_{n,d})$.

Let us denote by $T_n$ the total number of balls
and $Z_{n,i} = \nicefrac{U_{n,i}}{T_n}$ the proportion of
balls of type $i$ in the urn at time~$n$.
Note that, with these notations, in the with-replacement case,
\[P_n(v):= \mathbb P(\xi_{n+1} = v | \mathcal F_n) = \binom m {v_1, \ldots, v_d} \prod_{i=1}^d Z_{n,i}^{v_i},\]
while in the without-replacement case,
\[P_n(v):= \mathbb P(\xi_{n+1} = v | \mathcal F_n) = \frac{1}{\binom{T_n}{m}}\, \prod_{i=1}^d \binom{U_{n,i}}{v_i}.\]

In the literature, it is often assumed that the urn is {\it balanced}, 
meaning that, for all $v\in\Sigma_m^{(d)}$, $r(v):=\sum_{i=1}^d R_i(v) = S$, where $S$ is a positive integer. 
Without this assumption, one needs to control the speed of convergence of $T_n/n$ to its limit, 
and this is not yet understood for $m\geq 2$ (i.e. in the multiple-drawing case). 
It is also standard to assume that the urn is {\it tenable}, 
meaning that it is never asked to remove from the urn balls that are not in the urn: 
we give necessary and sufficient conditions for the tenability to be achieved. 
We give labels to these two assumptions since they will be used all along the article:
\begin{itemize}
\item[\texttt{(B)}] For all $v\in\Sigma_m^{(d)}$, $r(v) := \sum_{i=1}^d R_i(v) = S$. 
\item[\texttt{(T)}] The urn scheme is tenable. 
\end{itemize}
Although the tenability assumption is natural (an alternative would be to work conditionally on the event that no impossible configuration happens), we believe it is an interesting and challenging open question to remove the balance hypothesis altogether. Note that in the classical $m=1$ case, strong results can be proved without the balance assumption (see Janson~\cite{Janson04}); the reason is that the urn scheme embedded in continuous time is a multi-type Galton-Watson process, which is no longer the case when $m\geq 2$. Although our main results for $d$-colour urns require the balance assumption, we are able to prove partial results for two-colour non-balanced urns.

The following function from $\Sigma^{\scriptscriptstyle (d)} = \{(x_1, \ldots, x_d)\in [0,1]^d \colon \sum_{i=1}^d x_i = 1\}$ onto 
$\{(x_1, \ldots, x_d)\in \mathbb R^d \colon \sum_{i=1}^d x_i = 0\}$ (two $(d-1)$-dimensional spaces) and its
zeros will play a crucial role in the article (recall that $r(v):= \sum_{i=1}^d R_i(v)$):
\[h(x) = \sum_{v\in\Sigma_m^{(d)}} \binom {m}{v_1, \ldots, v_d} \left(\prod_{i=1}^d x_i^{v_i}\right)\, \Big(R(v) - r(v)\, x\Big).\]
We will especially focus on the stable zeros of $h$, i.e.\ the zeros at which 
all eigenvalues of $\nabla h$ (the Jacobian matrix of~$h$) have a negative real part: 
we let $\mathcal Z(h)$ denote the set of zeros of $h$, and $\mathtt{Stable}(h)$ denote the set of stable zeros of $h$.

In this article, we focus on the renormalised composition vector $Z_n:= (Z_{n,1}, \ldots, Z_{n,d})$.
In view of the definition above, the process $(Z_n)_{n\geq 0}$ is a Markov process that depends on two parameters, 
the initial composition $Z_0$ and the replacement function $R$.
Before stating them in full detail in the rest of this introduction, 
let us summarise our main results.
We prove the following results for balanced urns:
\begin{itemize}
\item The {\it limit set} (see Definition~\ref{definition:limit_set}) of the renormalised composition vector $Z_n$ is almost surely a compact connected set of $\Sigma^{\scriptscriptstyle (d)}$ stable by flow of the differential equation $\dot{x} = h(x)$ (see Theorem~\ref{th:main}$(a)$). Note that, given a function~$h$, it is a non-trivial question to determine the compact connected sets stable by the flow of $\dot{x} = h(x)$ (we give some examples in Section~\ref{sec:examples}). Also, the limit set of~$Z_n$ a priori depends on the initial composition vector~$Z_0$. Reducing the number of colours to $d=2$ makes the matter much simpler and, in this particular case, we state almost sure convergence to a constant vector $\theta$ as long as the function~$h$ is not constant equal to zero (see Corollary~\ref{th:2col}$(a)$).
\item Assuming that $Z_n$ converges almost surely to a stable zero of $h$, 
we prove convergence of the fluctuations around this almost sure limit 
(see Theorem~\ref{th:main}$(b)$).
\item When $h\equiv 0$ (we call this case the {\it diagonal case}), 
we prove almost sure convergence of the renormalised composition vector to a random vector $Z_{\infty}$ (see Theorem~\ref{th:diag}).
\end{itemize}
In the two-colour non-balanced case, we prove almost sure convergence to a zero of $h$ (as long as $h\not\equiv 0$), 
and some partial result for the fluctuations around this almost sure limit (see Theorem~\ref{th:non-balanced}).

\subsection{Main result for $d$-colour balanced urns}

\begin{definition}[{See, e.g., Pemantle~\cite[Definition~2.11]{Pemantle}}]\label{definition:limit_set}
Given a stochastic process $(\Pi_n)_{n\geq 0}$, we define its limit set as
\[L(\Pi) := \bigcap_{n\geq 0} \overline{\bigcup_{m\geq n} \Pi_m}.\]
\end{definition}

We let $\mathcal Z(h)$ denote the set of zeros of $h$: 
$\mathcal Z(h) = \{z\in\Sigma^{\sss (d)}\colon h(z) = 0\}$.
We also recall the definition of an attractor (see, e.g.\ \cite[p.14]{Pemantle}):

\begin{definition}
For all $x\in\Sigma^{\sss (d)}$ and $t\geq 0$, we let $\Phi_t(x)$ be the value at time $t$ of the (unique because $h$ is Lipschitz) solution of the ODE $\dot y = h(y)$ started at $y(0) = x$.
An attractor of the ODE $\dot y = h(y)$ is a set $\mathcal A\in\Sigma^{\sss (d)}$ that admits a neighbourhood $\mathcal U$ such that
\[\omega(\mathcal U) := \bigcap_{t\geq 0}\overline{\bigcup_{s>t} \Phi_s(\mathcal U)} = \mathcal A.\]
\end{definition}

We define Assumptions (A1) and (A2) as:
\begin{itemize}
\item[(A1)] The zeros of $h$ on $\Sigma^{\sss (d)}$ are isolated and there exists $\theta\in\mathcal Z(h)$ such that, for all $z\in \Sigma^{\sss (d)}\setminus\mathcal Z(h)$, $\langle z-\theta, h(z)\rangle<0$.
\item[(A2)] 
\begin{itemize} 
\item[(i)] Almost surely for all $z\in\mathcal Z(h)\setminus\{\theta\}$, $\langle u-z, h(u)\rangle\geq 0$ for all $u\neq z$ in a neighbourhood of $z$ in $\Sigma^{\sss (d)}$.
\item[(ii)] For all $n\geq 0$, $Z_n \notin \mathcal Z(h)\setminus\{\theta\}$. 
\item[(iii)] In the without-replacement case, and if $\mathcal Z(h)\setminus \{\theta\}\neq \varnothing$, for all $1\leq i\leq d$, $U_{n,i} \to+\infty$ almost surely when $n\to+\infty$.
\end{itemize}
\end{itemize}

{\bf NB:} If (A1) hold and $\mathcal Z(h) = \{\theta\}$, then (A2) holds.

\begin{thm}[Balanced and tenable $d$-colour urns]\label{th:main}
Under Assumptions~\texttt{(B)} and~\texttt{(T)}, we have:
\begin{enumerate}[(a)]
\item The limit set of the renormalised composition vector $Z_n= (Z_{n,1}, \ldots, Z_{n,d})$ 
is almost surely a compact connected set of $\Sigma^{\scriptscriptstyle (d)}$ 
stable by the flow of the differential equation $\dot{x} = h(x)$, 
and the flow of the ODE restricted to $L(Z)$ admits no other attractor except $L(Z)$ itself.
If (A1) holds, then $L(Z) = \{x\}$ for some $x\in\mathcal Z(h)$.
If, in addition, (A2) holds, then $Z_n$ converges almost surely to $\theta$.

\item Assume that there exists a stable zero $\theta$ of $h$ 
such that $Z_n$ converges almost surely to $\theta$ when $n$ goes to infinity.
Let $\Lambda$ be the eigenvalue of $-\nabla h(\theta)$ with the smallest real part,
let \[\Gamma=\frac1{S^2}\,\sum_{v\in\Sigma_m^{(d)}} \binom m{v_1, \ldots, v_d} \left(\prod_{i=1}^d \theta_i^{v_i}\right) (R(v)-S\theta)(R(v)-S\theta)^t.\]
Then,
\begin{itemize}
\item if $\mathtt{Re}(\Lambda)>\nicefrac{S}2$, then $\sqrt{n}(Z_n-\theta) \to \mathcal N(0, \Sigma)$,
in distribution when $n\to\infty$, where
\[\Sigma = \int_{0}^{+\infty}\Bigg(\exp\bigg(\Big(\frac{\nabla h(\theta)}{S}+\frac{\mathtt{Id}}{2}\Big)u\bigg)\Bigg)^{\!\!t}\Gamma\,
\exp\bigg(\Big(\frac{\nabla h(\theta)}{S}+\frac{\mathtt{Id}}{2}\Big)u\bigg)\, du.\]
\end{itemize}
Assume additionally that all Jordan blocks of $\nabla h(\theta)$ associated to $\Lambda$ are of size $1$. Then,
\begin{itemize}
\item if $\mathtt{Re}(\Lambda)=\nicefrac{S}2$, 
then $\sqrt{\nicefrac{n}{\log n}}(Z_n-\theta) \to \mathcal N(0, \Sigma)$,
in distribution when $n\to\infty$, where
\[\Sigma = \lim_{n\to\infty}\frac1{\log n}
\int_{0}^{\log n}\Bigg(\exp\bigg(\Big(\frac{\nabla h(\theta)}{S}+\frac{\mathtt{Id}}{2}\Big)u\bigg)\Bigg)^{\!\!t}\Gamma\,
\exp\bigg(\Big(\frac{\nabla h(\theta)}{S}+\frac{\mathtt{Id}}{2}\Big)u\bigg)\, du;\]
\item if $\mathtt{Re}(\Lambda)<\nicefrac{S}2$, then $n^{\nicefrac{\mathtt{Re}(\Lambda)}{S}}(Z_n-\theta)$ converges almost surely to a random variable.
\end{itemize}
\end{enumerate}
\end{thm}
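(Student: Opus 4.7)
The approach is to recast $(Z_n)$ as a Robbins--Monro stochastic approximation on the compact simplex $\Sigma^{(d)}$ and then invoke the almost-sure and central limit theorems of Renlund~\cite{Renlund} and Zhang~\cite{Zhang16}. Using $T_{n+1}=T_n+S$ from~\texttt{(B)}, an elementary computation gives
\[ Z_{n+1}-Z_n \;=\; \frac{1}{T_n+S}\bigl(R(\xi_{n+1})-S Z_n\bigr). \]
In the with-replacement case, the multinomial form of $P_n$ combined with $r(v)=S$ yields $\mathbb E[R(\xi_{n+1})-S Z_n\mid\mathcal F_n]=h(Z_n)$ directly from the definition of $h$; in the without-replacement case, a Taylor expansion of the hypergeometric probabilities in powers of $1/T_n$ produces the same mean field plus a deterministic remainder of order $1/T_n$. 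Setting $\Delta M_{n+1} := R(\xi_{n+1}) - \mathbb E[R(\xi_{n+1})\mid\mathcal F_n]$, we obtain the scheme
\[ Z_{n+1} \;=\; Z_n + \gamma_n\bigl(h(Z_n)+\Delta M_{n+1}+\rho_n\bigr), \qquad \gamma_n=\frac{1}{T_n+S}\sim \frac{1}{nS}, \]
with $\Delta M_{n+1}$ a uniformly bounded martingale difference and $\rho_n = O(1/T_n)$ (identically zero in the with-replacement case).

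Part~(a) then follows from the classical ODE method: $\sum\gamma_n=\infty$ and $\sum\gamma_n^2<\infty$, the field $h$ is polynomial hence Lipschitz on $\Sigma^{(d)}$, the noise is bounded and $\gamma_n\rho_n$ is summable. Renlund's framework ensures that $(Z_n)$ is an asymptotic pseudo-trajectory of $\dot x=h(x)$, so by Benaim-type theorems $L(Z)$ is almost surely a compact, connected, flow-invariant subset of $\Sigma^{(d)}$. For the convergence-to-$\theta$ statement, we use the Lyapunov function $V(x)=\tfrac12\|x-\theta\|^2$: a short computation gives
\[ \mathbb E[V(Z_{n+1})-V(Z_n)\mid\mathcal F_n] \;=\; \gamma_n\langle h(Z_n),Z_n-\theta\rangle + O(\gamma_n^2), \]
and the strict negativity of the inner product combined with the Robbins--Siegmund almost supermartingale lemma forces $V(Z_n)\to 0$ almost surely.

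For part~(b), linearising $h$ at $\theta$ transforms the recursion into
\[ Y_{n+1} \;=\; \bigl(\mathtt{Id}+\gamma_n\nabla h(\theta)\bigr)Y_n + \gamma_n \Delta M_{n+1} + o(\gamma_n\|Y_n\|), \qquad Y_n := Z_n-\theta, \]
while a direct computation shows that the conditional covariance of $\Delta M_{n+1}$ on the event $\{Z_n\to\theta\}$ converges to $S^2\Gamma$, with $\Gamma$ as in the statement. Applying Zhang's CLT~\cite{Zhang16} to this linearised algorithm with step $\gamma_n\sim 1/(nS)$ produces the three regimes, governed by the sign of $\mathtt{Re}(\Lambda)/S-1/2$: in the super-critical regime the drift dominates the noise and we obtain the standard $\sqrt n$-CLT, whose limit covariance is the unique solution of the appropriate Lyapunov equation written in the integral form appearing in the statement; at criticality, triviality of the Jordan blocks of $\Lambda$ ensures that the same integral grows only logarithmically, yielding the $\sqrt{n/\log n}$ normalisation; in the sub-critical regime the noise dominates and $n^{\mathtt{Re}(\Lambda)/S}Y_n$ is an $L^2$-bounded quasi-martingale whose almost-sure limit is given by Doob's theorem.

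The main technical obstacle I foresee is the careful verification of the hypotheses of Zhang's CLT, in particular the non-degeneracy and Jordan-block conditions at criticality that are needed for the normalised covariance integral to converge, together with a quantitative control of the without-replacement remainder $\rho_n$, to ensure that it contributes neither to the almost-sure limit of part~(a) nor to the limit fluctuation law of part~(b).
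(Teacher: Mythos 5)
Your proposal follows essentially the same route as the paper: write $Z_n$ as a Robbins--Monro recursion with bounded martingale noise and an $\mathcal O(\nicefrac1{T_n})$ without-replacement remainder, get the limit-set statement from the ODE method and the convergence to $\theta$ from a Lyapunov/almost-supermartingale argument (the paper packages both in a quoted theorem of Duflo and Laruelle--Pag\`es), and obtain the fluctuations by checking the hypotheses of Zhang's CLT after computing the limiting conditional covariance $S^2\Gamma$ via $h(\theta)=0$, exactly as the paper does after rescaling the step to $\nicefrac1{(n+1)}$ with $f=\nicefrac hS$ and verifying the $n^{\nicefrac32}$-remainder and $(2+\delta)$-moment conditions. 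One small correction: the $d$-dimensional limit-set result should be attributed to the Bena\"{i}m/Duflo/Laruelle--Pag\`es line of results (as in the paper) rather than to Renlund, whose theorems are one-dimensional and are used in the paper only for the non-balanced two-colour case.
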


{\bf Remark:} 
The fact that both $Z_n$ and $\theta$ are in $\Sigma^{\sss (d)}$ implies that
the matrix $\Sigma$ satisfies $\Sigma \cdot(1, \ldots, 1)^t = 0$.

{\bf Remark:} When applying this theorem to particular examples, 
the difficulty is to understand the flow of the differential equation $\dot{x} = h(x)$.
It is in general non-trivial to describe the limit set of $(Z_n)_{n\geq 0}$ 
(see, e.g., Laslier \& Laslier~\cite{LL} where this is carried out for one particular example), 
and this limit set is a priori not a point and depends on the initial composition $Z_0$.
The strength of our result is that, 
if one can prove, for a particular replacement rule and for one particular initial composition vector $Z_0$, 
that the renormalised composition vector $Z_n$ converges almost surely to a stable zero of $h$, 
then the fluctuations are given relatively easily by Theorem~\ref{th:main}$(b)$. 
We give numerous examples in Section~\ref{sec:examples}.

To discuss our main result, let us first apply it to the well-known particular case $m=1$.
In that case, the replacement rule is encoded by a replacement matrix $R=(R_{i,j})_{1\leq i,j\leq d}$.
At every time step, we pick a ball uniformly at random in the urn, denote by $i$ its colour, 
and put it back in the urn together with $R_{i,j}$ balls of colour $j$, for all $1\leq i,j\leq d$.
Note that this translates into our framework as $R_j(\boldsymbol e_i) = R_{i,j}$ for all $1\leq i,j\leq d$ 
(where $\boldsymbol e_i$ is the vector whose coordinates are all equal to~0 except the $i$th, which is equal to~1).
We thus have $h(x) = (R-S \mathtt{Id}_d) x$. 
If we assume that $R$ is irreducible and that the urn is tenable, 
the Perron-Frobenius theorem implies that $S$ is the spectral radius of $R$, 
the multiplicity of $S$ as eigenvalue of $R$ is one, 
and there exists an eigenvector $\varpi$ associated to $S$ 
whose coordinates are all non-negative and such that $\sum_{i=1}^d \varpi_i = 1$.

In particular, $\varpi$ is the unique zero of $h$ on $\Sigma^{\scriptscriptstyle (d)}$, 
and the eigenvalues of $R-S\mathtt{Id}_d$ restricted to $\Sigma^{\scriptscriptstyle (d)}$ 
all have negative real parts.
Thus $\varpi$ is a stable zero of $h$, and, for all $x\neq \varpi$, 
$\langle h(x),x-\varpi\rangle = \langle R-S\mathtt{Id}_d)(x-\varpi), x-\varpi \rangle < 0$.
Theorem~\ref{th:main}$(a)$ applies and gives almost sure convergence of $Z_n$ to $\varpi$ and
Theorem~\ref{th:main}$(b)$ applies straightforwardly, 
reproving some standard results from the literature 
(see Janson~\cite{Janson04}).

By analogy to the classical $m=1$ case, 
we can say that the cases for which $Z_n$ does not converge to a constant vector 
or converges to a constant vector that depends on $Z_0$ are the {\it non-irreducible} cases.
Unfortunately, we are not yet able to give a nice characterisation of those cases in terms of the replacement function $R$.
Theorem~\ref{th:main}$(a)$ gives a sufficient condition; 
namely, if there exists a zero $\theta$ of $h$ such that $\langle h(x), x-\theta\rangle<0$ 
for all $x\in\Sigma^{(d)}$, then $Z_n$ converges almost surely to $\theta$ 
(for all choices of $Z_0$ satisfying $\mathtt{(T)}$).
Note that, in the classical setting $m=1$, the non-irreducible cases are also more intricate; 
see Janson~\cite{Janson06} for two-colour triangular urns 
and Bose, Dasgupta \& Maulik~\cite{BDM} for the $d$-colour balanced and triangular case.

In the case when $h\equiv 0$, 
which we call {\it diagonal} case
(although~\cite{KM,KS} call it {\it triangular} 
- we choose to change the terminology so that it is coherent with the $m=1$ case), 
we prove the following:
\begin{thm}[Diagonal balanced case]\label{th:diag}
Under Assumption $\mathtt{(B)}$ and $\mathtt{(T)}$, if $h\equiv 0$, 
then $Z_n$ converges almost surely to a random vector $Z_{\infty}$ when $n$ tends to infinity.
\end{thm}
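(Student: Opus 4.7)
The plan is to prove that $(Z_n)_{n\ge 0}$ is a martingale adapted to $(\mathcal F_n)_{n\ge 0}$ that takes values in the compact simplex $\Sigma^{\sss (d)}$, and then to invoke the bounded martingale convergence theorem to get the claimed almost-sure convergence to a random vector $Z_\infty$.

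To check the martingale property, I would start from $U_{n+1}=U_n+R(\xi_{n+1})$ and use the balance assumption $T_{n+1}=T_n+S$ to rewrite the increment as
\[
Z_{n+1}-Z_n=\frac{R(\xi_{n+1})-S\,Z_n}{T_n+S}.
\]
Taking conditional expectation yields
\[
\mathbb E[Z_{n+1}-Z_n\mid\mathcal F_n]=\frac{\mathbb E[R(\xi_{n+1})\mid\mathcal F_n]-S\,Z_n}{T_n+S}.
\]
In the with-replacement case, the conditional law of $\xi_{n+1}$ is $\mathrm{Multinomial}(m,Z_n)$, and comparing with the defining formula of $h$ (using $r(v)=S$) shows that the numerator above is precisely $h(Z_n)$, which vanishes by hypothesis. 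This immediately gives $\mathbb E[Z_{n+1}\mid\mathcal F_n]=Z_n$.

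The without-replacement case requires an extra step, since the hypergeometric and multinomial moments of degree $\ge 2$ in~$v$ disagree. The way I would handle it is to exploit the fact that $h\equiv 0$ forces the replacement function~$R$ to be linear. Indeed the identity
\[
\sum_{v\in\Sigma_m^{(d)}}\binom m{v_1,\ldots,v_d}\Big(\prod_{i=1}^d x_i^{v_i}\Big)R(v)=Sx\qquad\text{for all }x\in\Sigma^{\sss (d)}
\]
is an identity of homogeneous degree-$m$ polynomials on the simplex, and the associated multivariate Bernstein-basis representation provides a bijection between $\mathbb R^d$-valued functions on $\Sigma_m^{(d)}$ and homogeneous degree-$m$ polynomials in $d$ variables. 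A direct check shows that the linear choice $R(v)=(S/m)v$ realises the right-hand side, and so uniqueness forces $R$ to depend linearly on~$v$. Since $\mathbb E[\xi_{n+1,i}\mid\mathcal F_n]=m Z_{n,i}$ in \emph{both} sampling schemes, this yields $\mathbb E[R(\xi_{n+1})\mid\mathcal F_n]=S Z_n$ and hence the martingale property, regardless of the scheme.

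The only slightly non-trivial step is the Bernstein-uniqueness argument enabling the without-replacement case to be folded into the same conclusion as the with-replacement one; everything else is bookkeeping. Once the martingale property is established, the fact that $Z_n\in\Sigma^{\sss (d)}$ for all $n$ makes the process uniformly bounded, and the martingale convergence theorem delivers the desired almost-sure limit $Z_\infty$.
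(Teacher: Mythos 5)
Your proof is correct, but it follows a genuinely different route from the paper's. The paper does not use that $(Z_n)$ is a martingale: it writes $Z_{n+1}=Z_n+\frac{1}{T_0+(n+1)S}\big(\Delta M_{n+1}+\varepsilon_{n+1}\big)$ (the stochastic-algorithm decomposition of Lemma~\ref{lem:algo_sto} with $h\equiv 0$), observes that the martingale part converges almost surely because its increments are bounded and divided by $T_i\sim iS$, so the quadratic variation $\sum_i \mathbb E[\Delta M_{i+1,k}^2|\mathcal F_i]/(T_0+(i+1)S)^2$ is summable, and that the remainder sum converges absolutely since $\|\varepsilon_i\|=\mathcal O(\nicefrac1i)$; this works uniformly in both sampling schemes without any structural information on $R$. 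You instead show that $Z_n$ is an exact bounded martingale: immediate in the with-replacement case since the drift is $h(Z_n)/T_{n+1}=0$, and in the without-replacement case via the structural fact that $h\equiv 0$ under \texttt{(B)} forces $R(v)=(S/m)v$, so that $\mathbb E[R(\xi_{n+1})|\mathcal F_n]=SZ_n$ follows from the first-moment identity $\mathbb E[\xi_{n+1}|\mathcal F_n]=mZ_n$, valid for both the multinomial and the hypergeometric draw. That structural fact is exactly the paper's Lemma~\ref{lem:diagonal}, which the paper proves by a longer induction using tenability but, notably, never uses in its proof of Theorem~\ref{th:diag}; your Bernstein-basis uniqueness argument is cleaner and does not even need \texttt{(T)} (the only point to state carefully is that the identity $\sum_v B_v(x)R(v)=Sx$ holds only on $\Sigma^{\sss(d)}$, so you should first homogenise the right-hand side as $Sx_j\big(\sum_i x_i\big)^{m-1}$, or equivalently note that equality on the simplex of homogeneous degree-$m$ polynomials implies equality of coefficients by scaling). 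What each approach buys: yours is shorter, identifies $(Z_n)$ as a genuine closed martingale (so, e.g., $\mathbb E[Z_\infty]=Z_0$ and in fact $\varepsilon_n\equiv 0$ in the diagonal case), while the paper's argument is more robust, requiring only $\|\varepsilon_n\|=\mathcal O(\nicefrac1n)$ and no classification of diagonal replacement rules.
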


\subsection{Main results for the two-colour case}
In the two-colour case, the renormalised composition vector $Z_n$ of the urn at time $n$ is characterised 
by its first coordinate since $Z_{n,1}+Z_{n,2}=1$. 
Thus, it is enough to study the first coordinate and the problem becomes unidimensional and easier.
Let us introduce simplified notations for this special case.

In the two-colour case, we say that the urn contains white balls and black balls.
We denote by $W_n$ and $X_n = \nicefrac{W_n}{T_n}$ respectively the number and the proportion of white balls in the urn at time $n$.
The replacement function can be seen as a replacement matrix
\[R = \begin{pmatrix}
a_0 & b_0 \\
a_1 & b_1 \\

\vdots & \vdots \\

a_m & b_m
\end{pmatrix},\]
where the $a_i$'s, and $b_i$'s are integers 
(such that $a_i+b_i=S$ for all $1\leq i\leq m$ under Assumption~$\mathtt{(B)}$) .
At each (discrete) time step~$n$, 
we draw $m$ balls in the urn {\it with}- or {\it without}-replacement,
and denote by $\zeta_n$ the number of white balls among those $m$.
Conditionally on $\zeta_n = k$,
we then add into the urn $a_{m-k}$ white balls and $b_{m-k}$ black balls 
(we keep the notations of~\cite{KM, KS}).

Note that, we these notations, in the {\it with}-replacement case,
\[\mathbb P(\zeta_{n+1} = k | \mathcal F_n) = \binom m k X_n^k (1-X_n)^{m-k};\]
and in the {\it without}-replacement case,
\[\mathbb P(\zeta_{n+1} = k | \mathcal F_n) = \binom m k \frac{(W_n)_k (T_n-W_n)_{m-k}}{(T_n)_m},\]
where $(n)_q = n (n-1)\cdots (n-q+1)$ for all integers $q$ and $n$.

The following result is a corollary of Theorem~\ref{th:main}.
\begin{cor}[Two-colour balanced urns]\label{th:2col}
Assume $d=2$, $\mathtt{(B)}$ and $\mathtt{(T)}$ and
let
\[g(x):= \sum_{k=0}^m \binom{m}{k} x^k (1-x)^{m-k} (a_{m-k}-S x).\]
\begin{enumerate}[(a)]
\item If $g\not\equiv 0$, then there exists a random variable $\theta_\star\in[0,1]$ such that
the proportion of white balls in the urn at time $n$, 
denoted by $X_n$, converges almost surely to $\theta_\star$. Furthermore, we have $g(\theta_\star)=0$ and $g'(\theta_\star)\leq 0$ almost surely.
\item Furthermore, conditionally on $\theta_\star$, the following holds with $\Lambda = -\nicefrac{g'(\theta_\star)}{S}$ and 
\[\Gamma = \frac1{S^2} \sum_{k=0}^m \binom mk \theta_\star^k (1-\theta_\star)^{m-k} (a_{m-k}-S\theta_\star)^2:\]
\begin{itemize}
\item If $\Lambda >\nicefrac 12$, then $\sqrt n(X_n - \theta_\star) \to \mathcal N\big(0,\frac{\Gamma}{2\Lambda-1}\big)$ in distribution when $n\to\infty$.
\item If $\Lambda = \nicefrac 12$, then $\sqrt{\nicefrac{n}{\log n}} (X_n-\theta_\star) \to \mathcal N(0,\Gamma)$ in distribution when $n\to\infty$.
\item If $\Lambda <\nicefrac 12$, then $n^{\Lambda}(X_n - \theta_\star)$ converges almost surely to a finite random variable.
\end{itemize}
\end{enumerate}
\end{cor}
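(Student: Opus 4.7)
The plan is to derive the corollary by specialising Theorem~\ref{th:main} to $d=2$. Identifying $\Sigma^{\sss(2)}$ with $[0,1]$ via the first coordinate $(x,1-x)\mapsto x$, we have $Z_n\leftrightarrow X_n$; the tangent space to $\Sigma^{\sss(2)}$ is one-dimensional and the field $h$ is encoded by its first coordinate, which evaluates at $(x,1-x)$ to exactly $g(x)$. The mean-field ODE of Theorem~\ref{th:main} therefore reduces to $\dot x = g(x)$ on $[0,1]$.

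For part (a), Theorem~\ref{th:main}(a) yields that the limit set $L(X)$ is a compact connected subset of $[0,1]$, hence a closed interval $[a,b]$, stable by the flow of $\dot x = g(x)$. Since $g$ is a polynomial of degree at most $m$, it has finitely many zeros on $[0,1]$, and the only chain-recurrent connected subsets of a one-dimensional polynomial flow are its single zeros. Hence $L(X) = \{\theta_\star\}$ almost surely for some random zero $\theta_\star$ of $g$, giving $X_n\to\theta_\star$ a.s.\ with $g(\theta_\star)=0$. To obtain $g'(\theta_\star)\leq 0$ a.s., I would invoke the classical non-convergence result for stochastic approximation algorithms (see, e.g., Pemantle~\cite{Pemantle}, and in the form needed here, Renlund~\cite{Renlund}): on a zero $\theta$ with $g'(\theta)>0$ the mean-field is linearly repulsive, and the martingale increments of $X_n$ carry a non-degenerate variance in that direction whenever the urn is non-trivial, which rules out a.s.\ convergence to such a $\theta$.

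For part (b), fix a stable zero $\theta$ of $g$ and condition on $\{\theta_\star = \theta\}$. Because the zeros of $g$ form a finite set, on this event there is a.s.\ a random time $N$ after which $X_n$ stays in a neighbourhood of $\theta$ containing no other zero of $g$; restricted to $[N,\infty)$ the process is a stochastic approximation with deterministic target $\theta$, and Theorem~\ref{th:main}(b) applies. The asymptotic parameters are obtained by one-dimensional specialisation: $\nabla h(\theta)$ collapses to the scalar $g'(\theta) = -S\Lambda$, the theorem's matrix $\Gamma$ is of rank one with non-trivial entry equal to the scalar $\Gamma$ of the corollary, and the integral formula for the asymptotic variance evaluates to $\Gamma\int_0^\infty e^{-(2\Lambda-1)u}\,du = \Gamma/(2\Lambda-1)$ in the regime $\Lambda>\nicefrac12$, and to $\Gamma$ in the critical case $\Lambda=\nicefrac12$ (Ces\`aro-type limit of a constant). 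The subcritical statement that $n^{\Lambda}(X_n-\theta_\star)$ converges a.s.\ to a random variable is then the direct one-dimensional reading of the last case of the theorem.

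The main technical obstacle is the conditional application of Theorem~\ref{th:main}(b), whose hypothesis requires a deterministic limit while $\theta_\star$ is a priori random; the localisation argument just sketched resolves this, but it is the only place where the two-colour proof departs from a verbatim specialisation of the $d$-colour theorem.
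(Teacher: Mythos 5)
Your proposal is correct in substance but follows a genuinely different route from the paper: the paper explicitly declines to deduce Corollary~\ref{th:2col} from Theorem~\ref{th:main} and instead gives a stand-alone one-dimensional proof, writing $X_{n+1}-X_n=\frac1n\big(\nicefrac{g(X_n)}{S}+\Delta M_{n+1}+\varepsilon_{n+1}\big)$ and invoking the one-dimensional stochastic-approximation results \cite[Corollary~2.7 and Theorem~2.9]{Pemantle} to obtain in one stroke almost sure convergence to a zero $\theta_\star$ of $g$ with $g'(\theta_\star)\leq 0$; part $(b)$ is then obtained by applying Theorem~\ref{CLT-Pages} directly to the one-dimensional recursion, with the scalar $\Gamma$ arising as the limit of $\mathbb E[(\Delta M_{n+1})^2|\mathcal F_n]$. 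Your route buys the statement as a true corollary of the $d$-colour theorem, and your localisation remark about the random limit is welcome (it is essentially the ``convergence with positive probability'' form of Theorem~\ref{CLT-Pages}, which is also what the paper implicitly relies on when it conditions on $\theta_\star$); your scalar specialisations are correct (the tangential eigenvalue of $\nabla h(\theta)$ is $g'(\theta)=-S\Lambda$, the matrix $\Gamma$ has rank one with relevant entry the scalar $\Gamma$, and the integral gives $\nicefrac{\Gamma}{(2\Lambda-1)}$, resp.\ $\Gamma$ in the critical case). The one step you should tighten is the reduction of the limit set to a singleton: Theorem~\ref{th:main}$(a)$ only asserts that $L(X)$ is a compact connected set \emph{stable by the flow} of $\dot x=g(x)$, and an interval $[a,b]$ whose endpoints are zeros of $g$ but with $g\neq 0$ in the interior is invariant under this flow, so invariance alone does not force a singleton; your argument really uses the stronger internal chain-recurrence property of limit sets (Bena{\"\i}m's theorem, cf.\ the results around \cite[Corollary~2.15]{Pemantle}), which does hold and should be cited in place of Theorem~\ref{th:main}$(a)$. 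Likewise, excluding zeros with $g'(\theta)>0$ requires the non-convergence theorem with its noise non-degeneracy hypothesis (\cite[Theorem~2.9]{Pemantle}), exactly as in the paper's proof. With those two references made explicit, your derivation is a valid alternative; the paper's direct one-dimensional argument simply avoids both issues and is shorter.
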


In the two--colour case, we are also able to get some partial results about the non-balanced case.
The following result is not a corollary of Theorem~\ref{th:main}, which only applies to balanced urn schemes.
Note that in the non-balance case, we need a stronger assumption than the tenability assumption: 
we are only able to study urns under the assumption that the total number of balls in the urn grows linearly in~$n$, 
i.e.\ $\liminf_{n\to\infty}\nicefrac{T_n}{n}>0$.
A way to ensure that this is true is for example to assume that $\min_{0\leq k\leq m} (a_k+b_k) \geq 1$.
\begin{thm}[Non-balanced two-colour case]\label{th:non-balanced}
Assume that $\liminf_{n\to\infty}\nicefrac{T_n}{n}>0$ and let
\[\tilde g(x):= \sum_{k=0}^m \binom m k x^k (1-x)^{m-k} \big((1-x)a_{m-k} - x b_{m-k}\big).\]
\begin{enumerate}[(a)]
\item If $\tilde g\not\equiv 0$,
then there exists a random variable $\theta_\star\in[0,1]$ such that the proportion of white balls in the urn, denoted by $X_n$, 
converges almost surely to $\theta_\star$. Furthermore, we have $\tilde g(\theta_\star) = 0$ and $\tilde g'(\theta_\star)\leq 0$ almost surely.
\item Conditionally on $\theta_{\star}$, let
\[\omega:= \sum_{k=0}^m \binom m k \theta_\star^k (1-\theta_\star)^{m-k} c_{m-k},\]
where $c_i := a_i+b_i$ for all $0\leq i \leq m$,
\[H(x) := \sum_{k=0}^m \binom m k x^k (1-x)^{m-k} \big((1-x)a_{m-k} - x b_{m-k}\big)^2,\]
and
\[\lambda := \frac{|h'(\theta_\star)|}{\omega}\quad \text{ and } \quad \sigma^2:= \frac{H(\theta_\star)}{\omega^2}.\]
Assume that $\sigma^2>0$. 
Then, if $\lambda > \nicefrac 12$, 
\[\sqrt n(Z_n-\theta_\star) \to \mathcal N\Big(0,\frac{\sigma^2}{2\lambda-1}\Big) \quad \text{in distribution when }n\to\infty.\]
\end{enumerate}
\end{thm}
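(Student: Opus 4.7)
The plan is to cast $(X_n)_{n\geq 0}$ as a one-dimensional stochastic approximation with random step size and then apply the convergence and CLT results of Renlund~\cite{Renlund} and Zhang~\cite{Zhang16}. A direct computation starting from $X_n=W_n/T_n$ gives
\[
X_{n+1}-X_n \;=\; \frac{1}{T_{n+1}}\bigl((1-X_n)\,a_{m-\zeta_{n+1}} - X_n\, b_{m-\zeta_{n+1}}\bigr).
\]
Replacing $1/T_{n+1}$ by $1/T_n$ (at cost $O(1/T_n^2)$) and taking $\mathcal F_n$-conditional expectations yields the scheme
\[
X_{n+1}-X_n \;=\; \frac{1}{T_n}\bigl(\tilde g(X_n) + \Delta M_{n+1} + r_{n+1}\bigr),
\]
where $\Delta M_{n+1}$ is a bounded martingale increment and $r_{n+1}=O(1/T_n)$. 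In the without-replacement case the hypergeometric/binomial discrepancy contributes an additional $O(1/T_n)$ bias, absorbed into $r_{n+1}$. The assumption $\liminf T_n/n>0$, combined with the deterministic upper bound $T_n\leq T_0+n\max_k c_k$, forces $\gamma_n:=1/T_n = \Theta(1/n)$, so $\sum\gamma_n=\infty$ and $\sum\gamma_n^2<\infty$, which is the regime covered by Renlund's and Zhang's theorems.

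For part (a), Renlund's convergence theorem then gives almost sure convergence of $X_n$ to a (possibly random) zero $\theta_\star$ of $\tilde g$ whenever $\tilde g\not\equiv 0$. The stability bound $\tilde g'(\theta_\star)\leq 0$ follows from the classical non-convergence-to-unstable-equilibria principle (as exposed in Pemantle~\cite{Pemantle}): the conditional variance of $\Delta M_{n+1}$ is bounded below by a positive constant whenever $X_n$ remains in a compact subset of $(0,1)$ around an unstable zero, and this noise prevents trapping at a point where $\tilde g'>0$.

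For part (b), I would work conditionally on the event $\{X_n\to\theta_\star\}$. On this event the increments $T_{n+1}-T_n = c_{m-\zeta_{n+1}}$ have $\mathcal F_n$-conditional mean converging to $\omega$ and uniformly bounded variance, so the Chow strong law for martingale differences gives $T_n/n\to\omega$ almost surely; hence $\gamma_n\sim 1/(\omega n)$. Similarly, the $\mathcal F_n$-conditional variance of the numerator $Y_{n+1}:=(1-X_n)a_{m-\zeta_{n+1}} - X_n b_{m-\zeta_{n+1}}$ converges to $H(\theta_\star)$. Applying Zhang's CLT for stochastic approximation with random step size, with effective step coefficient $1/\omega$, drift $\tilde g'(\theta_\star)$ and noise variance $H(\theta_\star)$, yields (under $2\lambda-1>0$) the announced Gaussian limit with asymptotic variance $H(\theta_\star)/\omega^2/(2|\tilde g'(\theta_\star)|/\omega-1) = \sigma^2/(2\lambda-1)$. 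The main obstacle is precisely this random, state-dependent step size $\gamma_n=1/T_n$, which in the non-balanced case is only asymptotically linear in $n$ on the conditioning event $\{X_n\to\theta_\star\}$: one therefore cannot apply the deterministic-step CLT directly, and must rely on a stochastic approximation CLT robust to random step sizes whose deterministic equivalent is only visible after conditioning, the intermediate input being the a.s.\ convergence $T_n/n\to\omega$ obtained from part (a).
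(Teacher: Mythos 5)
Your overall architecture is the paper's: write $X_n$ as a one-dimensional stochastic approximation with bounded martingale noise, an $\mathcal O(\nicefrac1{T_n})$ remainder and the random step $\nicefrac1{T_n}$ (the paper keeps $\nicefrac1{T_{n+1}}$, Lemma~\ref{lem:algo_sto2}, but your shift into the remainder is harmless and even simplifies the check $|\mathbb E[\gamma_{n+1}V_{n+1}|\mathcal F_n]|\leq K\gamma_n^2$); get a.s.\ convergence from Renlund; prove $T_n/n\to\omega$ conditionally on $\{X_n\to\theta_\star\}$ by a martingale SLLN plus Ces\`aro (this is exactly Proposition~\ref{lem:cv_Tn}); then feed $\lambda$ and $\sigma^2$ into a CLT that tolerates the random step. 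Two points, however, are genuine gaps rather than stylistic differences.

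First, the CLT you invoke does not exist in the form you cite it. Zhang's theorem, as stated and used here (Theorem~\ref{CLT-Pages}), requires the deterministic step $\gamma_n=\nicefrac1n$; it is precisely because no deterministic-step result applies that the paper switches, in the non-balanced case, to Renlund's CLT (Theorem~\ref{th:Renlund_CLT}), whose hypotheses are formulated for random $\gamma_n$ through the quantity $\hat\gamma_n=n\gamma_n\hat f(X_{n-1})$ and the conditional second moment of $n\gamma_n V_n$. Your verification sketch ($T_n/n\to\omega$, hence $\hat\gamma_n\to\lambda$ and $\mathbb E[(n\gamma_n V_n)^2|\mathcal F_{n-1}]\to H(\theta_\star)/\omega^2$) is exactly what is needed for that theorem, so the plan is repaired simply by citing the right tool; but as written, "Zhang's CLT robust to random step sizes" is not an available ingredient. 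Second, your argument for $\tilde g'(\theta_\star)\leq 0$ is incomplete. The paper obtains this directly from Renlund's law of large numbers (Theorem~\ref{th:renlund}), whose conclusion already contains $f'(\theta_\star)\leq 0$; you instead appeal to the non-convergence-to-unstable-equilibria principle with a noise lower bound "on a compact subset of $(0,1)$". That lower bound is indeed available at interior unstable zeros (one can check that $H(\theta_0)=0$ with $\theta_0\in(0,1)$ forces $\tilde g'(\theta_0)\leq 0$, so near an interior zero with $\tilde g'>0$ the conditional variance stays bounded away from $0$), but you neither verify it nor address the boundary zeros $\theta_0\in\{0,1\}$, where the sampling noise degenerates (near $X_n=1$ the draw is almost surely all white and the conditional variance of $\Delta M_{n+1}$ tends to $0$), so the classical repulsion argument does not apply there and convergence to a boundary zero with positive derivative is not excluded by your reasoning. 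Since the statement asserts $\tilde g'(\theta_\star)\leq 0$ almost surely, this case must be covered, and the cleanest fix is the paper's: use the full conclusion of Renlund's theorem rather than a separate noise-based argument.
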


{\bf Remark: } Note that in both Corollary~\ref{th:2col} and Theorem~\ref{th:non-balanced},
Statement~$(a)$ gives almost sure convergence to a random variable $\theta_\star$ 
which belongs to the set of zeros of the function $g$ (resp. $\tilde g$). 
The fact that~$g$ (resp. $\tilde g$) is a non-zero polynomial function ensures that 
this set is a set of at most $m$ isolated points of $[0,1]$. 
If~$g$ admits a unique zero $\theta_0$ in $[0,1]$ such that $g'(\theta_0)\leq 0$, 
then the proportion of white balls converges almost surely to this zero independently of the initial composition of the urn;
we give examples in Section~\ref{sec:examples}. 
This particular case is what we called the {\it irreducible} case when discussing Theorem~\ref{th:main}$(a)$.
If~$g$ admits at least two such zeros, then the almost sure limit of $X_n$ depends on the initial composition $X_0$;
we refer the reader to Example~4.1.5 for such an example. 

\subsection{Plan of the article}
The end of this introduction section (see Section~\ref{sec:tenability}) is devoted to 
stating  an algebraic necessary and sufficient condition for the urn to be tenable: 
it is a straightforward generalisation of the two-colour case studied by Kuba and Sulzbach~\cite[Lemma~1]{KS}.
Section~\ref{sec:LLN&CLT} contains the proofs of Theorems~\ref{th:main} and~\ref{th:diag}.
Section~\ref{sec:two_colour} treats the two-colour balanced case and contains the proof of Corollary~\ref{th:2col}.
We give in Section~\ref{sec:examples} numerous two-colour and three-colour examples and show how to apply our main results to them.
Finally, Section~\ref{sec:non_balanced} treats the two-colour non-balanced case and contains the proof of Theorem~\ref{th:non-balanced} as well as some examples.

\subsection{Tenability}\label{sec:tenability}

We here state a lemma giving necessary and sufficient conditions for tenability. This lemma and its proof are very similar to those stated in Kuba \& Sulzbach's article~\cite{KS} in the case of two-colour urns. The proof in the $d$-colour case would be very similar to the $d=2$ case and would not give additional insight. We therefore do not detail it.

\begin{lem}\label{lem:tenability}
Consider the urn process with initial composition $U_0$ and replacement function $R$.
For all $1\leq i\leq d$, we denote by $\nu_i$ the greatest common divisor of $\{R_i(v)\colon v\in \Sigma_m^{(d)}\setminus \{m\bs e_i\}\}$.
For all integers $\ell$, we denote by $[\ell]_i$ the remainder of division of $\ell$ by $\nu_i$.
\begin{itemize}
\item In the with-replacement case, the urn scheme is tenable if and only if, for all $1\leq i\leq d$, $R_i(v)\geq 0$ for all $v\neq m \bs e_i$,
and $R_i(m\bs e_i)\geq 0$ or $-R_i(m\bs e_i)$ is a divisor of $U_{0,i}$ and $\nu_i$.
\item In the without-replacement case, the urn scheme is tenable if and only if, for all $1\leq i\leq d$, $R_i(v)\geq -v_i$ for all $v\neq m\bs e_i$ and
\[R_i(m\bs e_i) \in [-m, \infty] \cup 
\left([-m-\nu_i+1, -m) \cap 
\Big\{\ell\in -\mathbb N \colon [U_{0,i}]_i \in \big\{[-\ell]_i, [-\ell+1]_i, \ldots, [m+\nu_i-1]_i\big\}\Big\}\right).\]
\end{itemize}
\end{lem}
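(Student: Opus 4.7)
The proof will follow the two-colour argument of Kuba and Sulzbach by separating sufficiency and necessity for each sampling scheme, with the modular arithmetic of the $R_i(v)$'s playing the central role. The key observation is that between draws of $m\bs e_i$, every change to $U_{n,i}$ is a multiple of $\nu_i$, so the residue class $[U_{n,i}]_i$ is an invariant of the process on such subintervals. Once a draw of $m\bs e_i$ does occur, the residue class shifts by $[R_i(m\bs e_i)]_i$. This lets us describe exactly the set of values $U_{n,i}$ can take, and hence determine whether a dangerous configuration (one where the next draw would push $U_{n,i}$ negative) is reachable.

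For the sufficiency direction in the with-replacement case, if all $R_i(v)\geq 0$ for $v\neq m\bs e_i$ then only draws of $m\bs e_i$ can reduce $U_{n,i}$. Such a draw has positive probability only when all other coordinates are nonzero at least in the relevant colours; and in fact $v = m\bs e_i$ requires $U_{n,j}=0$ for all $j\neq i$, i.e.\ the urn is purely colour $i$. When $-R_i(m\bs e_i)$ divides $\nu_i$, the invariant $[U_{n,i}]_i \equiv [U_{0,i}]_i$ actually holds \emph{across} draws of $m\bs e_i$ too (modulo $-R_i(m\bs e_i)$), so divisibility of $U_{0,i}$ by $-R_i(m\bs e_i)$ forces $U_{n,i}$ to be either $0$ or at least $-R_i(m\bs e_i)$ at every step, ruling out the violating transition. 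In the without-replacement case the argument is analogous but the threshold becomes $m$ instead of $0$ because we need $m$ balls of colour $i$ to draw $m\bs e_i$; the interval $[-m-\nu_i+1,-m)$ together with the explicit list of admissible residues $\{[-\ell]_i,\ldots,[m+\nu_i-1]_i\}$ is precisely the set of $(R_i(m\bs e_i),U_{0,i})$ for which the residue class of $U_{n,i}$ cannot hit $\{m,m+1,\ldots,m-R_i(m\bs e_i)-1\}$.

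For the necessity direction, assuming a violation of any of the listed conditions we construct a sample path of positive probability that removes non-existent balls. If $R_i(v)<0$ for some $v\neq m\bs e_i$, one first drives the urn to a state where $U_{n,i}<|R_i(v)|$ while every $j$ with $v_j>0$ still has at least one ball (with-replacement) or at least $v_j$ balls (without-replacement); this is possible because the replacement function is fixed while the urn can grow arbitrarily large in the other colours. If $R_i(m\bs e_i)<0$ with $-R_i(m\bs e_i)$ failing the divisibility / residue condition, one first manoeuvres the urn into the monochromatic-$i$ state (in the with-replacement case), respectively into the bad residue class for $U_{n,i}$ (in the without-replacement case), and then applies $m\bs e_i$ draws until the reservoir crosses the forbidden threshold.

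The main obstacle is precisely this last step of the necessity argument in the $d$-colour setting: verifying that the bad configurations are actually reachable from the initial composition $U_0$ with positive probability. In the $d=2$ case this is essentially automatic since there is only one alternative colour; for $d\geq 3$ one must trace an explicit sequence of draws that depletes $d-1$ colours simultaneously (or puts $U_{n,i}$ into a prescribed residue class) without inadvertently emptying the urn along the way. This requires a small but careful combinatorial argument exploiting the fact that Assumption \texttt{(T)} is being negated only locally (at one colour $i$ and one replacement vector $v$), so the other colours' replacement rules remain flexible enough to steer the trajectory to the desired state. Once this reachability has been established, the rest of the argument is a verbatim extension of~\cite[Lemma~1]{KS}, which is why the authors say the details add no new insight.
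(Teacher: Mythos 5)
The paper itself does not prove this lemma at all---it defers to Kuba \& Sulzbach~\cite[Lemma~1]{KS} and merely asserts that the $d$-colour case is analogous---so your sketch can only be compared with that intended argument. Your overall strategy (the residue class of $U_{n,i}$ modulo $\nu_i$ is invariant between draws of $m\bs e_i$, plus a reachability argument for bad configurations) is indeed the right one, but the with-replacement half contains a genuine error: you claim that the draw $\xi_{n+1}=m\bs e_i$ ``requires $U_{n,j}=0$ for all $j\neq i$''. This is false. With replacement, $\mathbb P(\xi_{n+1}=m\bs e_i\mid\mathcal F_n)=Z_{n,i}^m>0$ as soon as $U_{n,i}\geq 1$, whatever the other colours contain (it is the \emph{without}-replacement scheme that imposes a threshold, namely $U_{n,i}\geq m$). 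This mistake derails your necessity argument: the plan of ``manoeuvring the urn into the monochromatic-$i$ state'' is both unnecessary and in general impossible (under the nonnegativity constraints already imposed, the colours $j\neq i$ typically can never be emptied, so no positive-probability path reaches a monochromatic state). The correct and simpler route is that once $U_{n,i}\geq 1$ the draw $m\bs e_i$ is always available; repeating it, together with one draw of a vector $v$ with $-R_i(m\bs e_i)\nmid R_i(v)$ when the divisibility by $\nu_i$ fails, produces with positive probability a time at which $0<U_{n,i}<-R_i(m\bs e_i)$ and the next $m\bs e_i$ draw violates tenability. As written, your sketch does not establish the ``only if'' direction in the with-replacement case.

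Two further gaps remain. First, the reachability step that you yourself single out as ``the main obstacle'' (steering the urn, with positive probability, into a state where the offending replacement can actually be triggered, without emptying the colours needed for the triggering draw) is precisely the content a proof must supply in the $d$-colour case; announcing that it requires ``a small but careful combinatorial argument'' is not a proof of it, and it interacts with the error above, since the states you propose to reach are not the right ones. Second, your construction of a violating state for $v\neq m\bs e_i$ with $R_i(v)<0$ asks for $U_{n,i}<|R_i(v)|$ while the draw of $v$ is still possible; in the with-replacement case with $v_i\geq 1$ this forces $1\leq U_{n,i}\leq |R_i(v)|-1$, which is empty when $R_i(v)=-1$, so this corner needs a separate discussion rather than the uniform treatment you give. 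The without-replacement paragraph (threshold $m$, admissible residues avoiding the window $\{m,\ldots,-R_i(m\bs e_i)-1\}$) is directionally correct but is stated at the same level of generality and inherits the same unproved reachability step.
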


\section{Law of large numbers and central limit theorem for balanced urn schemes}\label{sec:LLN&CLT}

\subsection{This model can be described as a stochastic algorithm}
\begin{lem}\label{lem:algo_sto}
Under Assumption~$\mathtt{(B)}$ and~$\mathtt{(T)}$,
the renormalised composition vector of the urn at time $n$, denoted by
$Z_n$, satisfies the following recursion:
\begin{equation}
Z_{n+1} = Z_n + \frac1{T_{n+1}} \big(h(Z_n) + \Delta M_{n+1} + \varepsilon_{n+1}\big),
\end{equation}
where
\[
h(x) = \sum_{v\in\Sigma_m^{(d)}} \binom {m}{v_1, \ldots, v_d} 
\left(\prod_{i=1}^d x_i^{v_i}\right) \big(R(v) - r(v)\, x\big)\quad
\text{ and }\quad
\Delta M_{n+1} =Y_{n+1}-\mathbb E[Y_{n+1}|\mathcal F_{n}]
\]
with \[Y_{n+1}=R(\xi_{n+1}) - r(\xi_{n+1}) Z_n,\]
where $\xi_{n+1}$ stands for the (random) vector of balls drawn at time $n+1$,
and finally $\varepsilon_{n+1} = 0$ in the \emph{with}-replacement case 
and $\varepsilon_{n+1}$ is a $\mathcal F_{n+1}$-adapted term satisfying $\varepsilon_n\to 0$ almost surely 
when $n$ tends to infinity in the \emph{without}-replacement case.
\end{lem}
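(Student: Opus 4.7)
The plan is to derive the recursion directly from the definition of the urn dynamics, then decompose into a predictable drift plus a martingale increment plus a (possibly zero) error.

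First I would exploit balance: since $r(\xi_{n+1}) = S$ almost surely under $\mathtt{(B)}$, we have $T_{n+1} = T_n + S$, so $T_n = T_0 + nS$ is deterministic (and goes to infinity). Writing $Z_{n+1} = U_{n+1}/T_{n+1} = (U_n + R(\xi_{n+1}))/T_{n+1}$ and inserting $T_n = T_{n+1} - r(\xi_{n+1})$ gives, after elementary manipulation,
\[
Z_{n+1} - Z_n = \frac{1}{T_{n+1}}\big(R(\xi_{n+1}) - r(\xi_{n+1}) Z_n\big) = \frac{Y_{n+1}}{T_{n+1}}.
\]
Tenability $\mathtt{(T)}$ ensures that $\xi_{n+1}$ is well defined. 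Then I simply write $Y_{n+1} = \mathbb E[Y_{n+1}\mid\mathcal F_n] + \Delta M_{n+1}$, where by construction $\Delta M_{n+1}$ is a martingale difference; the only remaining task is to identify $\mathbb E[Y_{n+1}\mid \mathcal F_n]$ as $h(Z_n)$ up to a vanishing error $\varepsilon_{n+1}$.

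For the with-replacement case this is immediate: $\xi_{n+1}\mid\mathcal F_n$ is multinomial with parameters $(m,Z_{n,1},\ldots,Z_{n,d})$, so
\[
\mathbb E[Y_{n+1}\mid\mathcal F_n] = \sum_{v\in\Sigma_m^{(d)}} \binom{m}{v_1,\ldots,v_d}\prod_{i=1}^d Z_{n,i}^{v_i}\,\big(R(v)-r(v)Z_n\big) = h(Z_n),
\]
so one may take $\varepsilon_{n+1}\equiv 0$.

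The main technical work is the without-replacement case, where I would show that the hypergeometric probability differs from the multinomial one by $O(1/T_n)$, uniformly in $v$. Concretely, using $P_n(v) = \binom{m}{v_1,\ldots,v_d}\prod_i (U_{n,i})_{v_i}/(T_n)_m$, I would compare this with $\binom{m}{v_1,\ldots,v_d}\prod_i Z_{n,i}^{v_i} = \binom{m}{v_1,\ldots,v_d}\prod_i U_{n,i}^{v_i}/T_n^m$. Expanding the falling factorials as $(U_{n,i})_{v_i} = U_{n,i}^{v_i} + O(U_{n,i}^{v_i-1})$ and $(T_n)_m = T_n^m + O(T_n^{m-1})$, and using $U_{n,i}\leq T_n$ together with the fact that $v_i\leq m$ is bounded, I get
\[
\left|\frac{\prod_i (U_{n,i})_{v_i}}{(T_n)_m} - \prod_i Z_{n,i}^{v_i}\right| = O\!\left(\frac{1}{T_n}\right).
\]
Since $R$ takes finitely many values on $\Sigma_m^{(d)}$ and $Z_n\in\Sigma^{(d)}$ is bounded, the factor $R(v)-r(v)Z_n$ is uniformly bounded. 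Summing over the finite set $\Sigma_m^{(d)}$ yields $\varepsilon_{n+1} := \mathbb E[Y_{n+1}\mid\mathcal F_n] - h(Z_n) = O(1/T_n)$, which is $\mathcal F_n$-measurable (hence $\mathcal F_{n+1}$-adapted) and tends to $0$ almost surely since $T_n = T_0 + nS \to\infty$. The only real obstacle is keeping this error estimate genuinely uniform even when some coordinates $U_{n,i}$ remain bounded (a color becomes rare); this is handled by noting that the corresponding terms $(U_{n,i})_{v_i}$ vanish whenever $v_i > U_{n,i}$, so the bound $O(1/T_n)$ survives unchanged.
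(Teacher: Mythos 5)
Your proposal is correct and follows essentially the same route as the paper: the same algebraic identity $Z_{n+1}-Z_n=\frac{1}{T_{n+1}}\big(R(\xi_{n+1})-r(\xi_{n+1})Z_n\big)$, the same martingale/drift decomposition, and the same $\mathcal O(\nicefrac{1}{T_n})$ comparison of the hypergeometric and multinomial weights (your falling-factorial expansion of $(U_{n,i})_{v_i}/(T_n)_m$ is just a rewriting of the paper's estimate on $\prod_i\prod_{j=0}^{v_i-1}(Z_{n,i}-\nicefrac{j}{T_n})/\prod_{j=0}^{m-1}(1-\nicefrac{j}{T_n})$), with uniformity over the finitely many $v\in\Sigma_m^{(d)}$ handled identically.
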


\begin{proof}
Recall that 
$U_{n+1} = U_n + R(\xi_{n+1})$ and $T_{n+1}= T_n + r(\xi_{n+1})$,
implying that
\begin{align*}
Z_{n+1} - Z_n
&= \frac{U_n + R(\xi_{n+1})}{T_{n+1}} - \frac{U_n}{T_n} 
 = \frac1{T_{n+1}} \left(U_n + R(\xi_{n+1}) - \frac{T_n + r(\xi_{n+1})}{T_n}\, U_n\right)\\
&= \frac1{T_{n+1}} \left(R(\xi_{n+1}) - r(\xi_{n+1})\, Z_n\right)
 = \frac1{T_{n+1}} \big(\mathbb E[Y_{n+1}|\mathcal F_n] + \Delta M_{n+1}\big).
\end{align*}
Note that, in the {\it with}-replacement case,
\[\mathbb E[Y_{n+1}|\mathcal F_n]
= \sum_{v\in\Sigma_m^{(d)}} P_n(v) (R(v) - r(v)\, Z_n)
= h(Z_n),\]
since $P_n(v) = \binom{m}{v_1, \ldots, v_d} \prod_{i=1}^d Z_{n,i}^{v_i}$,
which concludes the proof in the {\it with}-replacement case.
In the {\it without}-replacement case,
\[\mathbb E[Y_{n+1}|\mathcal F_n]
= \sum_{v\in\Sigma_m^{(d)}} \frac{\prod_{i=1}^d \binom{U_{n,i}}{v_i}}{\binom{T_n}{m}}\, \big(R(v) - r(v)\, Z_n\big)
= h(Z_n) + \varepsilon_{n+1},\]
with
\begin{align}
\varepsilon_{n+1}
&:= \sum_{v\in\Sigma_m^{(d)}} \left(\frac{\prod_{i=1}^d \binom{U_{n,i}}{v_i}}{\binom{T_n}{m}} - \binom{m}{v_1, \ldots, v_d} \prod_{i=1}^d Z_{n,i}^{v_i}\right) \big(R(v)-r(v) Z_n\big)\notag\\
&= \sum_{v\in\Sigma_m^{(d)}} \binom{m}{v_1, \ldots, v_d} \big(R(v) - r(v) Z_n\big)
\left(\frac{\prod_{i=1}^d\prod_{j=0}^{v_i-1} (Z_{n,i}-\nicefrac{j}{T_n})}{\prod_{j=0}^{m-1}(1-\nicefrac{j}{T_n})}-\prod_{i=1}^d Z_{n,i}^{v_i}\right).\label{eq:def_eps}
\end{align}
To conclude the proof, we show that, almost surely as $n\to+\infty$,
\begin{equation}\label{eq:UB_eps}
\|\varepsilon_{n+1}\|_1 = \mathcal O(\nicefrac1{T_n}).
\end{equation}
This claim is obvious in the with-replacement case; we thus assume that we are in the without-replacement case.
Note that
\[\|\varepsilon_{n+1}\|_1
\leq 2S\sum_{v\in\Sigma_m^{(d)}} \binom m{v_1, \ldots, v_d} \prod_{i=1}^d Z_{n,i}^{v_i}
\left|\frac{\prod_{i=1}^d\prod_{j=0}^{v_i-1}(1-\nicefrac j{U_{n,i}})}{\prod_{i=0}^{m-1} (1-\nicefrac i{T_n})} - 1\right|,\]
and, for all $v\in \Sigma_m^{(d)}$,
\[\frac{\prod_{i=1}^d\prod_{j=0}^{v_i-1}(1-\nicefrac j{U_{n,i}})}{\prod_{i=0}^{m-1} (1-\nicefrac i{T_n})} - 1
\leq \frac{1}{(1-\frac{m-1}{T_n})^m} - 1 = \mathcal O(\nicefrac1{T_n}),\]
as $n\to+\infty$ (recall that, by Assumption \texttt{(B)}, $T_n = T_0 + nS\to+\infty$).
Moreover, for all $v\in \Sigma_m^{(d)}$ such that $v_i\leq U_{n,i}$ for all $1\leq i\leq d$,
\[1-\frac{\prod_{i=1}^d\prod_{j=0}^{v_i-1}(1-\nicefrac j{U_{n,i}})}{\prod_{i=1}^m (1-\nicefrac i{T_n})}
\leq \sum_{i=1}^d\sum_{j=0}^{v_i-1} \frac j{U_{n,i}}
\leq \sum_{i=1}^d \frac{v_i^2}{U_{n,i}} = \frac1{T_n} \sum_{i=1}^d \frac{v_i^2}{Z_{n,i}},\]
because, 
for all integers $a\geq 1$, for all $(x_1, \ldots, x_a)\in [0,1]^a$, 
$\prod_{i=1}^a (1-x_i)\geq 1-\sum_{i=1}^a x_i$
(this can be proved straightforwardly by induction on $a$).
For all integers $n$, we let $\Sigma(n)$ denote the subset of $\Sigma_m^{\sss (d)}$ such that
$v_i\leq U_{n,i}$ for all $1\leq i\leq d$; with this notation, we have, almost surely as $n\to+\infty$
\begin{align*}
&\sum_{v\in\Sigma(n)} 
\binom{m}{v_1, \ldots, v_d} \prod_{i=1}^d Z_{n,i}^{v_i}
\left|\frac{\prod_{i=1}^d\prod_{j=0}^{v_i-1}(1-\nicefrac j{U_{n,i}})}{\prod_{i=0}^{m-1} (1-\nicefrac i{T_n})} - 1\right|\\
&\hspace{2cm}\leq \mathcal O\Big(\frac1{T_n}\Big) \sum_{v\in\Sigma(n)} 
\binom m{v_1, \ldots, v_d} \prod_{i=1}^d Z_{n,i}^{v_i}
\Big(1+\sum_{i=1}^d \frac{v_i^2}{Z_{n,i}}\Big),\\
&\hspace{2cm}= \mathcal O\Big(\frac1{T_n}\Big)\left(1 +  \sum_{v\in\Sigma_m^{(d)}} \binom{m}{v_1, \ldots, v_d}\sum_{i=1}^d v_i^2 \prod_{k=1}^d Z_{n,k}^{v_k-\bs 1_{k=i}}\right)\\
&\hspace{2cm}\leq (1+ m^2 d^m)\mathcal O\Big(\frac1{T_n}\Big)  = \mathcal O\Big(\frac1{T_n}\Big).
\end{align*}
Indeed, if $v_i = 0$, then $v_i^2 \prod_{k=1}^d Z_{n,k}^{v_k-\bs 1_{k=i}} = 0$, otherwise, $v_i^2 \prod_{k=1}^d Z_{n,k}^{v_k-\bs 1_{k=i}}\leq v_i^2\leq m^2$.
For $v\in \Sigma_m^{(d)}$ such that there exists $1\leq i\leq d$ satisfying $v_i>U_{n,i}$, we have
\[1-\frac{\prod_{i=1}^d\prod_{j=0}^{v_i-1}(1-\nicefrac j{U_{n,i}})}{\prod_{i=1}^m (1-\nicefrac i{T_n})}=1,\]
and thus
\begin{align*}
&\sum_{v\in\Sigma(n)^c} 
\binom m{v_1, \ldots, v_d} \prod_{i=1}^d Z_{n,i}^{v_i}
\left|\frac{\prod_{i=1}^d\prod_{j=0}^{v_i-1}(1-\nicefrac j{U_{n,i}})}{\prod_{i=0}^{m-1} (1-\nicefrac i{T_n})} - 1\right|\\
&\hspace{2cm}= \sum_{v\in\Sigma(n)^c} 
\binom m{v_1, \ldots, v_d} \left(\prod_{i=1, v_i\leq U_{n,i}}^d Z_{n,i}^{v_i}\right)\left(\prod_{i=1, v_i> U_{n,i}}^d Z_{n,i}^{v_i}\right)\\
&\hspace{2cm}\leq \sum_{v\in\Sigma(n)^c}\binom m{v_1, \ldots, v_d} \prod_{i =1, v_i>U_{n,i}}^d \left(\frac{v_i\bs 1_{v_i>U_{n,i}}}{T_n}\right)^{\!\!v_i}
\leq \frac{(m d)^m}{T_n}.
\end{align*}
In total, we thus get that $\|\varepsilon_{n+1}\|_1 = \mathcal O(\nicefrac1{T_n}) = \mathcal O(\nicefrac 1n)$ almost surely as $n\to+\infty$, which concludes the proof.
\end{proof}

\subsection{Strong law of large numbers (Proof of Theorem~\ref{th:main}$(a)$)}
To prove Theorem~1$(a)$, we use the following result:
\begin{thm}[See, e.g., {\cite[Cor. 2.15]{Pemantle}}]\label{th:pemantle}
If $(Z_n)_{n\geq 0}$ is a sequence of random variables satisfying Equation~\eqref{eq:algo_sto} and if, in addition, $h$ is a Lipschitz function, $\sup_{n\geq 1}\|\Delta M_n\|<+\infty$, and $\sum_{n\geq 1}\|\varepsilon_n\|/n<+\infty$ almost surely, then the limit set $L(Z)$ is connected. Furthermore, the flow of $\dot y = h(y)$ restricted to $L(Z)$ has no other attractor except $L(Z)$ itself.
\end{thm}

We check that the assumptions of Theorem~\ref{th:pemantle} are satisfied in our case:
The function $h$ is Lipschitz and the noise is bounded: for all $i\geq 1$,
\[\|\Delta M_i\| \leq \|Y_i\| + \mathbb E[\|Y_i\||\mathcal F_{i-1}]
\leq 2\max_{v\in \Sigma_m^{\sss (d)}} \|R(v)\|,\]
by definition of $Y_i$.
Furthermore, by~\eqref{eq:UB_eps}, and by Assumption~\texttt{(B)}, $\|\varepsilon_n\|/n = \mathcal O(\nicefrac1{n^2})$ almost surely as $n\to+\infty$, which implies
$\sum_{n\geq 1}\|\varepsilon_n\|/n<+\infty$.
Therefore, Theorem~\ref{th:pemantle} implies that $L(Z)$ is connected
and that the  flow of the ODE $\dot y = h(y)$ restricted to $L(Z)$ 
has no attractor except $L(Z)$ itself. 
By Assumption (A1), $\theta$ is an attractor of the ODE. 
Therefore, if $\theta\in L(Z)$, then $L(Z) = \{\theta\}$.
We now assume that $\theta\notin L(Z)$, and show that this implies that $L(Z) = \{x\}$ for some $x\in\mathcal Z(h)\setminus\{\theta\}$.
Our argument is in 2 steps: 
\begin{itemize}
\item[(1)] We first use (A1) to prove that there exists $x$, a zero of $h$ that belongs to $L(Z)$: if $\theta$ is the only zero of $F$, this is a contradiction, which implies $L(Z) = \{\theta\}$ as claimed.
\item[(2)] In the case when $\theta$ is not the only zero of $h$, 
we use (A1) again to prove that $L(Z) =\{x\}$ for some $x\in \mathcal Z(h)\setminus\{\theta\}$.
\end{itemize}

(1) Because $L(Z)$ is closed by definition, assuming that $\theta\notin L(Z)$ implies that
there exists $x\in L(Z)$ such that $\min_{u\in L(Z)} \|u-\theta\| = \|x-\theta\|$.
If $x\notin \mathcal Z(h)$, then, by Assumption (A1), $\langle x-\theta, h(x)\rangle <0$. 
By definition of the flow, for all $t\geq 0$,
\[\partial_t\|\Phi_t(x)-\theta\|^2 = 2\langle \partial_t \Phi_t(x), \Phi_t(x)-\theta\rangle = 2\langle h(\Phi_t(x)), \Phi_t(x)-\theta\rangle,\]
and in particular, at $t=0$,
\[\partial_t\|\Phi_t(x)-\theta\|^2_{|t=0} = 2\langle h(x), x-\theta\rangle <0.\]
Therefore, for $t$ small enough, $\|\Phi_t(x)-\theta\|<\|x-\theta\|$ and thus $\Phi_t(x)\notin L(Z)$, 
which is impossible because $L(Z)$ is invariant by the flow of $\dot y = h(y)$.
Therefore, $x\in \mathcal Z(h)$. 
Note that if $\mathcal Z(h) = \{\theta\}$, this is impossible (since, by assumption $\theta\notin L(Z)$), and we can conclude by contradiction that $\theta\in L(Z)$, and thus $L(Z) = \{\theta\}$. (In that case, skip Step (2) below.)

(2) If $\mathcal Z(h)\neq\{\theta\}$, we have shown that, if $\theta\notin L(Z)$, then there exists $x\neq \theta$ a zero of $h$ that belongs to $L(Z)$, and $\|x-\theta\| = \min_{u\in L(Z)}\|u-\theta\|$. We now aim to show that $L(Z) = \{x\}$: to do, so we reason by contradiction and assume that $L(Z)\neq \{x\}$.
Since the zeros of $h$ are isolated by assumption, we can choose $\varepsilon>0$ such that 
\[\mathcal U_\varepsilon:=L(Z)\cap\bar{\mathcal B}(\theta, (1+2\varepsilon)\|x-\theta\|)\] 
contains no zero of~$h$ except~$x$ itself ($\bar{\mathcal B}(z,r)$ denotes the closed ball centred at $z$ and of radius $r$). See Figure~\ref{fig} for a representation of the different sets used in this part of the proof.
By choosing $\varepsilon$ small enough, we can also make sure that $L(Z)$ is not included in
\[\mathcal A_{\varepsilon}:= L(Z)\cap \bar{\mathcal B}(\theta, (1+\varepsilon)\|x-\theta\|).\] 
\begin{figure}
\begin{center}\includegraphics[width=5cm]{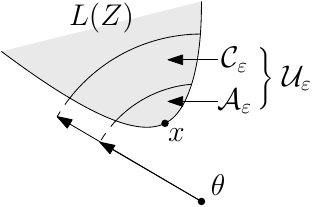}\end{center}
\caption{The sets $\mathcal A_{\varepsilon}, \mathcal C_{\varepsilon}$, and $\mathcal U_{\varepsilon}$ as used in the proof of Theorem 1.1$(a)$. The radius of the small circle is $(1+\varepsilon)\|x-\theta\|$, the radius of the large circle is $(1+2\varepsilon)\|x-\theta\|$.}
\label{fig}
\end{figure}
We let 
\[\mathcal C_{\varepsilon}=L(Z)\cap \big(\bar{\mathcal B}(\theta, (1+2\varepsilon)\|x-\theta\|)\setminus\mathcal B(\theta, (1+\varepsilon)\|x-\theta\|)\big),\]
where ${\mathcal B}(z,r)$ denotes the open ball centred at $z$ and of radius $r$.
The set $\mathcal C_{\varepsilon}$ is by definition closed (and bounded), and for all $u\in\mathcal C_{\varepsilon}$, by Assumption (A1), $\langle h(u), u-\theta\rangle <0$.
By continuity of $h$, this implies that
\[\kappa:=\sup_{u\in \mathcal C_{\varepsilon}}\langle h(u), u-\theta\rangle <0.\]
From this, we deduce that $\mathcal A_\varepsilon$ is an attractor for the ODE $\dot y = h(y)$. 
Indeed, for all $u\in\mathcal C_{\varepsilon}$, 
for all $t\geq 0$, if $\Phi_t(u)\in \mathcal C_\varepsilon$,
\[\partial_t\|\Phi_t(u)-\theta\|=2\langle h(\Phi_t(u)), u-\theta\rangle\leq \kappa<0,\]
And thus for all $t$ large enough, $\Phi_t(u)\in \mathcal A_{\varepsilon}$ (recall that $\Phi_t(u)\in L(Z)$ because $L(Z)$ is stable by the flow, by Theorem~\ref{th:pemantle}).
Similarly, $\partial_t\|\Phi_t(u)-\theta\|\leq 0$ as long as $\Phi_t(u)\in\mathcal A_{\varepsilon}$.
Therefore, if $u\in\mathcal A_{\varepsilon}$, then the solution started at $u$ stays in $\mathcal A_{\varepsilon}$ for all times.
Since, by Theorem~1, 
the flow of the ODE restricted to $L(Z)$ admits no other attractor except $L(Z)$ itself, 
this means that
\[L(Z) = \mathcal A_\varepsilon,\]
which is impossible since we have chosen $\varepsilon$ such that $L(Z)$ is not included in $\mathcal A_\varepsilon$. 

We have thus shown that, under (A1), $Z_n\to x$ almost surely for some $x\in\mathcal Z(h)$. If $\mathcal Z(h) = \{\theta\}$, we can conclude that $Z_n \to \theta$. Otherwise, it remains to show that, under (A1-2), $Z_n\to\theta$ almost surely as $n\to+\infty$.

We reason by contradiction and assume that $Z_n \to x\in\mathcal Z(h)\setminus\{\theta\}$. 
By Equation~\eqref{eq:algo_sto}, we get that, for all $n\geq n_0$,
\[\|Z_{n+1}-x\|^2
= \|Z_n-x + \gamma_n(h(Z_n)+\Delta M_{n+1}+ \varepsilon_{n+1})\|^2
\geq \|Z_n-x\|^2 + 2\gamma_n \langle Z_n-x, h(Z_n) + \Delta M_{n+1}+ \varepsilon_{n+1}\rangle,
\]
where we have set $\gamma_n = 1/T_{n+1}$ for all $n\geq 0$.
This implies
\begin{equation}\label{eq:norm-decr}
\mathbb E_n\|Z_{n+1}-x\|^2
\geq \mathbb E\|Z_n-x\|^2 + 2\gamma_n \langle Z_n-x, h(Z_n) \varepsilon_{n+1}\rangle,
\end{equation}
because $\Delta M_{n+1}$ is a martingale increment.
In the with-replacement case, $\varepsilon_{n+1} = 0$ and, by Assumption (A2-i), almost surely,
there exists~$n_0$ such that, for all $n\geq n_0$, $\langle h(Z_n), Z_n-z\rangle\geq 0$.
Therefore, for all $n\geq n_0$,
\[\mathbb E\|Z_{n+1}-x\|^2\geq \mathbb E\|Z_n-x\|^2\geq \mathbb E\|Z_{n_0}-x\|^2>0,\]
because by Assumption (A2-ii), $Z_n \neq x$ for all $n\geq 0$.
This is a contradiction, and we thus get that $Z_n\to \theta$ as claimed.

In the without-replacement case, 
we show that, on the event $Z_n \to x$, almost surely as $n\to+\infty$,
\begin{equation}\label{eq:eps_negl}
\langle Z_n-x, \varepsilon_{n+1}\rangle  = o\big(\langle Z_n-x, h(Z_n)\rangle\big).
\end{equation}
Note that, on the event that $Z_n\to x$, for all $n$ large enough, $Z_n$ is in a neighbourhood of $x$ that contains no other zero of the function $h$ (since by Assumption (A1), the zeros of $h$ are isolated). Also, by Assumption (A2-ii), $Z_n\neq x$ for all $n\geq 0$. Thus, by Assumption (A2-i), for all $n$ large enough, $\langle Z_n-x, h(Z_n)\rangle$ is non-null for all $n$ large enough, and~\eqref{eq:eps_negl} makes sense.
Before proving Equation~\eqref{eq:eps_negl}, 
we show how it allows us to conclude the proof of Theorem 1$(a)$:
Equation~\eqref{eq:eps_negl} implies 
that for all $\varepsilon\in(0,1)$, there exists $n_1\geq n_0$ such that, for all $n\geq n_1$,
\[\langle Z_n-x, \varepsilon_{n+1}\rangle\geq -\varepsilon\langle Z_n-x, h(Z_n)\rangle,\]
which, together with Equation~\eqref{eq:norm-decr}, gives
\[\mathbb E_n\|Z_{n+1}-x\|^2
\geq\mathbb E\|Z_n-x\|^2 + (1-\varepsilon)\gamma_n\langle Z_n-x, h(Z_n)\rangle
\geq \mathbb E\|Z_n-x\|^2,
\]
Therefore,
$\mathbb E\|Z_{n_1}-x\|>0$ implies that $\liminf\mathbb E\|Z_n-x\|>0$ for all $n\geq n_1$, 
which is impossible since we have assumed that $Z_n\to x$.
Therefore, $\mathbb E\|Z_{n_1}-x\|=0$, and thus $Z_{n_1} = x$ almost surely,
which contradicts the fact that, by Assumption (A2-ii), 
$Z_n\notin \mathcal Z(h)\setminus\{\theta\}$ almost surely for all $n\geq 0$.

To conclude the proof, it thus only remains to prove Equation~\eqref{eq:eps_negl} in the without-replacement case.
We use Equation~\eqref{eq:def_eps}:
on the event that $Z_n \to x\in\mathcal Z(h)$, we have
\begin{align*}
\varepsilon_{n+1}
&= \sum_{v\in\Sigma_m^{(d)}}\binom{m}{v_1, \ldots, v_d} \prod_{i=1}^d Z_{n,i}^{v_i} \big(R(v)-r(v) Z_n\big) 
\left(\frac{\prod_{i=1}^d (1-\nicefrac1{U_{n,i}}) \cdots (1-\nicefrac{(v_i-1)}{U_{n,i}})}{(1-\nicefrac1{T_n})\cdots(1-\nicefrac{(m-1)}{T_n})} - 1\right).
\end{align*}
As in the proof of Lemma~\ref{lem:algo_sto2}, we let $\Sigma(n)$ denote the subset of $\Sigma_m^{\sss (d)}$ such that $v_i\leq U_{n,i}$ for all $1\leq i\leq d$. 
Since, by Assumption (A2-iii), $U_{n,i}\to+\infty$ almost surely for all $1\leq i\leq d$, we have that $\Sigma(n) = \varnothing$ for all $n$ large enough.
Thus
\begin{align*}
\varepsilon_{n+1}&
=\sum_{v\in\Sigma(n)^c}
\binom{m}{v_1, \ldots, v_d} \prod_{i=1}^d Z_{n,i}^{v_i} \big(R(v)-r(v) Z_n\big) 
\left(\frac{\prod_{i=1}^d (1-\nicefrac1{U_{n,i}}) \cdots (1-\nicefrac{(v_i-1)}{U_{n,i}})}{(1-\nicefrac1{T_n})\cdots(1-\nicefrac{(m-1)}{T_n})} - 1\right)\\
&=\sum_{v\in\Sigma(n)^c}
\binom{m}{v_1, \ldots, v_d} \prod_{i=1}^d Z_{n,i}^{v_i} \big(R(v)-r(v) Z_n\big) 
\left(\mathcal O(\nicefrac1{T_n}) + \sum_{i=1}^d \mathcal O(\nicefrac1{U_{n,i}})\right),
\end{align*}
which implies
\[\langle Z_n-x, \varepsilon_{n+1}\rangle
= \left(\mathcal O(\nicefrac1{T_n}) + \sum_{i=1}^d \mathcal O(\nicefrac1{U_{n,i}})\right)
\langle Z_n -x, h(Z_n)\rangle= o\big(\langle Z_n -x, h(Z_n)\rangle\big),
\]
since, by Assumption (A2-iii), $U_{n,i}\to+\infty$ almost surely for all $1\leq i\leq d$.
This concludes the proof of~\eqref{eq:eps_negl} and thus of Theorem 1$(a)$.

\subsection{Central limit theorem (Proof of Theorem~\ref{th:main}$(b)$)}
To prove Theorem~\ref{th:main}$(b)$, the idea is again to apply standard theorems from the stochastic algorithms literature.
We state here a weak version of a result by Zhang~\cite{Zhang16}.
We also refer the reader to Laruelle \& Pag\`es~\cite[Appendix Theorem A.2]{LP}\footnote{There are different versions of this paper. We refer the reader to the ArXiV version \texttt{ArXiV:1101.2786}, which has been regularly updated by the authors.}.
\begin{thm}\label{CLT-Pages}
Assume that $\theta_n$ satisfies the recursion
\begin{equation}\label{eq:algosto}\forall n\geq n_0;\quad
\theta_{n+1}=\theta_n+\frac1{n+1}f(\theta_n)+\frac1{n+1}(\Delta \hat M_{n+1}+\hat \varepsilon_{n+1}),
\end{equation} where 
$f:\mathbb R^d \to \mathbb R^d$ is a differentiable non-null function, 
$\theta_0$ is a deterministic vector, 
for all $n\geq n_0$, $\Delta \hat M_n$ is an $\mathcal F_n$-increment martingale 
and $\hat \varepsilon_n$ is an $\mathcal F_n$-adapted remainder term.
Assume in addition that there exists $\theta\in \mathbb R^d$ a stable zero of $f$ such that $\theta_n$ converges to $\theta$ with positive probability.
Also assume that, for some $\delta>0$,
\[\sup_{n\geq 0}\mathbb E(\|\Delta \hat M_{n+1}\|^{2+\delta}|\mathcal F_n)< +\infty,\quad \text{and}\quad
\mathbb E(\Delta \hat M_{n+1}\Delta \hat M_{n+1}^t|\mathcal F_n)\stackrel{n\to \infty}{\longrightarrow}\hat \Gamma\quad \text{almost surely},\]
where $\hat \Gamma$ is a deterministic symmetric positive semi-definite matrix and
for some $\eta >0$
\begin{equation}\label{eq:32}
n^{\nicefrac32}\,\mathbb E\big[\|\hat \varepsilon_{n+1}\|^2\indi_{\|\theta_n-\theta\|\leq\eta} | \mathcal F_n\big] \stackrel{n\to \infty}{\longrightarrow} 0.
\end{equation}
Let $\hat \Lambda$ be the eigenvalue of $-\nabla f(\theta)$ with the largest real part. 
If we assume that $\theta_n$ converges almost surely to some deterministic limit $\theta$, then:
\begin{itemize}
\item If $\mathtt{Re}(\hat \Lambda)>\nicefrac{1}{2}$, then
$\sqrt{n}(\theta_n-\theta) \stackrel{n\to \infty}{\longrightarrow} \mathcal N(0,\hat \Sigma)$ in distribution,
where \[\hat \Sigma=\displaystyle\int_{0}^{+\infty}\Big(\mathtt e^{(\nabla f(\theta)+\frac{\mathtt{Id}}{2})u}\Big)^{\!\!t}\hat \Gamma\,
\mathtt e^{(\nabla f(\theta)+\frac{\mathtt{Id}}{2})u}\, du.\]
\end{itemize}
Assume additionally that $f$ is twice differentiable, and that all Jordan blocks of $\nabla f(\theta)$ associated to $\hat \Lambda$ have size~1. Then:
\begin{itemize}
\item If $\mathtt{Re}(\hat\Lambda)=\nicefrac{1}{2}$,
then $\displaystyle \sqrt{\frac{n}{\log n}}(\theta_n-\theta)\stackrel{n\to \infty}{\longrightarrow} \mathcal N(0,\hat \Sigma)$, in distribution,
where 
\[\hat \Sigma = \lim_{n\to\infty} \frac1{\log n} \int_0^{\log n} 
\Big(\mathtt e^{(\nabla f(\theta)+\frac{\mathtt{Id}}{2})u}\Big)^{\!\!t}\hat \Gamma\,
\mathtt e^{(\nabla f(\theta)+\frac{\mathtt{Id}}{2})u}\, du.\]
\item If $\mathtt{Re}(\hat \Lambda)<\nicefrac{1}{2}$,
then $n^{\mathtt{Re}(\hat \Lambda)}(\theta_n-\theta)$ converges almost surely to a finite random variable.
\end{itemize}
\end{thm}

{\bf Remark: }Note that Assumption~\eqref{eq:32} is not stated as such in~\cite{LP} and~\cite{Zhang16}, in which the assumption depends on the values of $\mathtt{Re}(\hat\Lambda)$. One can check that~\eqref{eq:32} is stronger than the assumptions of~\cite{LP} and~\cite{Zhang16}, and since it holds in our particular case, we only state this weaker version.

Applying this result to our framework gives the proof of the second claim of Theorem~\ref{th:main}:
\begin{proof}[Proof of Theorem~\ref{th:main}$(b)$]
Recall that (see Lemma~\ref{lem:algo_sto} for details) 
\[Z_{n+1}-Z_n = \frac1{T_{n+1}}\big(h(Z_n) + \Delta M_{n+1} + \varepsilon_{n+1}\big).\]
We also have $T_n = T_0 + n S$ (by Assumption~$\mathtt{(B)}$), which gives
\begin{align*}
Z_{n+1}-Z_n 
&= \frac1{(n+1)S} \big(h(Z_n) + \Delta M_{n+1} + \varepsilon_{n+1}\big) + 
\frac1{(n+1)S}\left(\frac{(n+1)S}{T_0+(n+1)S} - 1\right)\big(h(Z_n) + \Delta M_{n+1} + \varepsilon_{n+1}\big)\\
&= \frac1{n+1} \big(f(Z_n) + \Delta \hat M_{n+1} + \hat \varepsilon_{n+1}\big),
\end{align*}
where $f = \nicefrac h S$ (note that this function is infinitely differentiable), 
$\Delta \hat M_{n+1} = \nicefrac{\Delta M_{n+1}}{S}$ and
\[\hat \varepsilon_{n+1} 
= \frac{\varepsilon_{n+1}}{S} + \bigg(\frac{1}{1+\frac{T_0}{(n+1)S}}-1\bigg)\big(f(Z_n) + \Delta \hat M_{n+1} + \nicefrac{\varepsilon_{n+1}}{S}\big).\]
This last equality implies that, almost surely when $n$ tends to infinity,
\[\|\hat \varepsilon_{n+1}\| \leq \frac{\|\varepsilon_{n+1}\|}{S} + \mathcal O(\nicefrac1n).\]
Recall that in the {\it with}-replacement case, $\varepsilon_n = 0$ for all integers $n$. 
In the {\it without}-replacement case, we have already proved that $\|\varepsilon_n\| = \mathcal O(\nicefrac1{T_n}) = \mathcal O(\nicefrac1n)$
(see Equation~\eqref{eq:UB_eps}),
implying that 
$\|\hat \varepsilon_n\| = \mathcal O(\nicefrac1n)$,
and 
\begin{equation*}
n^{\nicefrac32}\,\mathbb E[\|\hat \varepsilon_n\|^2|\indi_{|\theta_n-\theta|\leq \eta}|\mathcal F_{n-1}]
= \mathcal O(n^{-\nicefrac12}) \to 0,
\end{equation*}
almost surely when $n$ tends to infinity, for all $\eta>0$.

We already mentioned that $\Delta M_{n+1}$ is almost surely bounded by $2(\|R\|+S)$, and thus $\sup_{n\geq 0}\mathbb E[\|\hat \Delta M_{n+1}\|^{2+\delta}|\mathcal F_n]<+\infty$ almost surely for all $\delta>0$.

Finally, note that
\begin{align*}
&\mathbb E[\Delta M_{n+1}\Delta M_{n+1}^t|\mathcal F_n]\\
&\hspace{2cm}= \mathbb E[Y_{n+1}Y_{n+1}^t|\mathcal F_n] - \mathbb E[Y_{n+1}|\mathcal F_n]\mathbb E[Y_{n+1}^t|\mathcal F_n]\\
&\hspace{2cm}= \sum_{v\in\Sigma_m^{(d)}} P_n(v) (R(v)-SZ_n)(R(v)-SZ_n)^t 
- \sum_{v,w\in\Sigma_m^{(d)}} P_n(v)P_n(w) (R(v)-SZ_n)(R(w)-SZ_n)^t. 
\end{align*}
Since $Z_n\to \theta$ almost surely, we have, in both the with- and the without-replacement cases that
$P_n(v) \to \binom m {v_1, \ldots, v_d} \prod_{i=1}^d \theta_i^{v_i}$ almost surely, implying that
\[\mathbb E[\Delta \hat M_{n+1}\Delta \hat M_{n+1}^t|\mathcal F_n]
= \frac1{S^2}\, \mathbb E[\Delta M_{n+1}\Delta M_{n+1}^t|\mathcal F_n] 
\stackrel{a.s.}{\longrightarrow} \Gamma,\]
where
\begin{align*}
S^2\,\Gamma 
=& \sum_{v\in\Sigma_m^{(d)}} \binom m{v_1, \ldots, v_d} \left(\prod_{i=1}^d \theta_i^{v_i}\right) (R(v)-S\theta)(R(v)-S\theta)^t\\
&- \sum_{v,w\in\Sigma_m^{(d)}} \binom m{v_1, \ldots, v_d}
\binom m{w_1, \ldots, w_d} \left(\prod_{i=1}^d \theta_i^{v_i+w_i} \right)
(R(v)-S\theta)(R(w)-S\theta)^t\\
=&  \sum_{v\in\Sigma_m^{(d)}} \binom m{v_1, \ldots, v_d} \left(\prod_{i=1}^d \theta_i^{v_i}\right) (R(v)-S\theta)(R(v)-S\theta)^t
- h(\theta) h(\theta)^t\\
=& \sum_{v\in\Sigma_m^{(d)}} \binom m{v_1, \ldots, v_d} \left(\prod_{i=1}^d \theta_i^{v_i}\right) (R(v)-S\theta)(R(v)-S\theta)^t,
\end{align*}
since $h(\theta)=0$. Note that $\Gamma$ is the limit of a sequence of symmetric positive, semi-definite matrices, and, as such, it is also symmetric positive and semi-definite. Therefore, Theorem~\ref{CLT-Pages} applies, which concludes the proof.
\end{proof}

{\bf Remark: }The result obtained in~\cite{Zhang16} also permits to treat the cases $\Re(\hat\Lambda) \leq \nicefrac S2$ when the Jordan blocks associated $\hat\Lambda$ are not all of size one. The asymptotic renormalisation then depends on the size of the largest blocks of $\nabla f(\theta)$ associated to $\hat\Lambda$. Applying this stronger version to our framework would give a generalisation of Theorem~\ref{th:main} to these cases. Since this generalisation is quite technical to state and since most of the examples fall under Theorem~\ref{th:main} as it is, we do not give more details.

\subsection{The diagonal case (proof of Theorem~\ref{th:diag})}\label{sec:diagonal}
Before proving Theorem~\ref{th:diag}, we give a simple characterisation of the diagonal balanced urn schemes:
\begin{lem}\label{lem:diagonal}
Under assumption~$\mathtt{(B)}$,
the function $h$ is identically null on $\Sigma^{(d)}$ if, and only if, 
there exists an integer $\sigma$ such that $S=m\sigma$ and
\begin{equation}\label{eq:truc}
R(v) = \sigma v \quad \text{ for all }\quad v\in\Sigma_m^{(d)}.
\end{equation}
\end{lem}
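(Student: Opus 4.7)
The plan is to treat the two directions separately, with the forward (``if'') direction being a straightforward multinomial computation and the converse relying on turning the vanishing of $h$ on the simplex $\Sigma^{(d)}$ into a polynomial identity on all of $\mathbb R^d$.

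For the easy direction, I will substitute $R(v) = \sigma v$ and $r(v) = S = m\sigma$ into the definition of $h$, factor out $\sigma$, and use the standard identity
\[
\sum_{v\in\Sigma_m^{(d)}} \binom{m}{v_1,\ldots,v_d}\,v_j\,\prod_{i=1}^d x_i^{v_i} = m\,x_j\,(x_1+\cdots+x_d)^{m-1},
\]
so that on $\Sigma^{(d)}$ (where $\sum_i x_i = 1$) the first sum equals $m\sigma x = Sx$, exactly cancelling the second term $Sx$.

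For the converse, assume $h\equiv 0$ on $\Sigma^{(d)}$. For each coordinate $j$, set
\[
P_j(x) := \sum_{v\in\Sigma_m^{(d)}} \binom{m}{v_1,\ldots,v_d}\left(\prod_{i=1}^d x_i^{v_i}\right) R_j(v),
\]
which is a homogeneous polynomial of degree $m$ in $x$. Using $\sum_v \binom{m}{v_1,\ldots,v_d} \prod_i x_i^{v_i} = (x_1+\cdots+x_d)^m$, the assumption $h\equiv 0$ on $\Sigma^{(d)}$ reads $P_j(x) = S x_j$ on $\Sigma^{(d)}$. The homogeneous polynomial $Q_j(x) := P_j(x) - S x_j (x_1+\cdots+x_d)^{m-1}$ then vanishes on $\Sigma^{(d)}$, hence on the whole positive cone $\mathbb R_{+}^d$ by homogeneity, and therefore identically on $\mathbb R^d$ (as a polynomial vanishing on an open set).

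Matching monomial coefficients in the resulting identity $P_j(x) = S x_j (x_1+\cdots+x_d)^{m-1}$, and using the relation $\binom{m-1}{v_1,\ldots,v_j-1,\ldots,v_d} = \frac{v_j}{m} \binom{m}{v_1,\ldots,v_d}$, yields $R_j(v) = \frac{S}{m}\,v_j$ for every $j$ and every $v\in\Sigma_m^{(d)}$. The main (and only subtle) obstacle is arguing that $S/m$ is actually an integer: I will specialise $v$ so that some coordinate $v_j$ equals~$1$ (for instance $v = \boldsymbol e_1 + (m-1)\boldsymbol e_2$ when $m\geq 2$, and the trivial case $m=1$ is immediate), and invoke the fact that $R$ takes values in $\mathbb Z^d$ to conclude that $\sigma := S/m\in\mathbb N$, which gives $R(v) = \sigma v$ and $S = m\sigma$ as required.
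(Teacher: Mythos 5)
Your proposal is correct, and it takes a genuinely different (and leaner) route than the paper. The paper proves the converse by induction on the number of non-zero coordinates of $v$: it first uses the tenability condition (non-negativity of the $R_j(v)$, via Lemma~\ref{lem:tenability}) to force $R_j(v)=0$ on coordinates outside the support of $v$, and then extracts the remaining coefficients through the one-variable substitution $x_p=t$, $x_i=(1-t)/k$, solving a triangular system in the powers of $t$. You instead observe that under \texttt{(B)} the identity $h\equiv 0$ on $\Sigma^{(d)}$ is exactly $P_j(x)=Sx_j$ there, homogenise both sides by multiplying the right-hand side by $(x_1+\cdots+x_d)^{m-1}$, conclude that the degree-$m$ homogeneous polynomial $Q_j$ vanishes on the cone over the simplex and hence identically, and read off $R_j(v)=\tfrac{S}{m}v_j$ by matching multinomial coefficients via $\binom{m-1}{v_1,\ldots,v_j-1,\ldots,v_d}=\tfrac{v_j}{m}\binom{m}{v_1,\ldots,v_d}$; the integrality step (specialising to $v=\boldsymbol e_1+(m-1)\boldsymbol e_2$, with the trivial case $m=1$ aside) is the same as the paper's. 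Your argument buys two things: it replaces the induction and the $t$-substitution by a single coefficient-identification, and it uses only the assumption that $R$ is $\mathbb Z^d$-valued, not tenability — which matches the lemma's statement (only \texttt{(B)} is assumed) more faithfully than the paper's own proof, which invokes \texttt{(T)}. The only cosmetic remark is that $S/m$ is a priori an integer rather than a natural number; positivity of $S$ in \texttt{(B)} gives $\sigma\in\mathbb N$ if desired, but the lemma only asks for an integer.
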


\begin{proof}
First note that, straightforwardly, the fact that there exists an integer $\sigma$ such that $S=m\sigma$ and such that Equation~\eqref{eq:truc} holds implies that $h\equiv 0$. 

The reverse implication is less straightforward.
First note that, for all $1\leq i\leq d$, 
$h(\bs e_i) = R(m\bs e_i) - S\bs e_i = 0$, 
implying that $R(m\bs e_i) = S\bs e_i$ for all $1\leq i\leq d$.

We reason by induction and prove that there exists $\sigma$ such that $R(v)=\sigma v$ for all vectors $v\in\Sigma_m^{(d)}$ having at most $k$ non-null coordinates. Note that if this result is true, it implies that $\sigma = \nicefrac S m$ and thus that $m$ divides $S$ (take $v = \bs e_1 + (m-1)\bs e_2$ and recall that $R_1(v) = \sigma$ is an integer).

Assume that this is true for some integer $k<m$, $R(v) = \sigma v$ for all vector $v\in\Sigma_m^{(d)}$ having at most $k$ non-null coordinates.
Let us prove that this property extends to vectors with at most $k+1$ non-zero coordinates. 
Without loss of generality, we can focus on vectors $v\in\Sigma_m^{(d)}$ such that the last $d-k-1$ coordinates are equal to zero, 
which is actually the set $\Sigma_m^{(k+1)}$ (with a slight abuse of notation since these vectors are still $d$-dimensional). 
Let $x = x_1 \bs e_1 + \ldots + x_{k+1} \bs e_{k+1}\in\Sigma^{(d)}$, we have, by assumption,
\begin{equation}\label{eq:induction}
\sum_{v\in\Sigma_m^{(k+1)}} \binom{m}{v_1, \ldots, v_{k+1}} \left(\prod_{i=1}^{k+1} x_i^{v_i}\right) R(v) = Sx.
\end{equation}
First note that for all $k+2\leq j\leq d$, we have
\[\sum_{v\in\Sigma_m^{(k+1)}} \binom{m}{v_1, \ldots, v_{k+1}} \left(\prod_{i=1}^{k+1} x_i^{v_i}\right) R_j(v) = 0.\]
Note that the tenability assumption implies that $R_j(v)\geq 0$ for all $v\notin \{m\bs e_1, \ldots, m\bs e_d\}$ (see Lemma~\ref{lem:tenability}).
Moreover, we have proved that for all $1\leq i\leq d$, $R(m\bs e_i) = S\bs e_i$, 
implying in particular that $R_j(m\bs e_i)\geq 0$ for all $1\leq j\leq d$. 
Therefore, the sum in the above display is a sum of non-negative terms. 
The fact that it is equal to zero thus implies that all its terms are null, 
and thus that $R_j(v) = 0~(= \sigma v_j)$, for all $k+2\leq j\leq d$. 
It remains to prove that the same equality, namely $R_j(v) = \sigma v_j$ holds for all $1\leq j\leq k+1$.

Note that
\[\sum_{v\in\Sigma_m^{(k+1)}} \binom{m}{v_1, \ldots, v_{k+1}} \left(\prod_{i=1}^{k+1} x_i^{v_i}\right) \big(R(v)-\sigma v\big) 
= \sum_{v\in\Sigma_m^{(k+1)}} \binom{m}{v_1, \ldots, v_{k+1}} \left(\prod_{i=1}^{k+1} x_i^{v_i}\indi_{v_i\neq 0}\right) 
\big(R(v)-\sigma v\big),\]
since all $v\in \Sigma_m^{(k+1)}$ having at least one null coordinate satisfy $R(v) = \sigma v$ by the induction hypothesis.
Therefore, Equation~\eqref{eq:induction} implies that
\begin{equation}\label{eq:Sx}
Sx 
= \sum_{v\in\Sigma_m^{(k+1)}} \binom{m}{v_1, \ldots, v_{k+1}} 
\left(\prod_{i=1}^{k+1} x_i^{v_i}\,\indi_{v_i\neq 0}\right) 
\left(R(v)-\sigma v\right) 
+ \sigma \sum_{v\in\Sigma_m^{(k+1)}} \binom{m}{v_1, \ldots, v_{k+1}} \left(\prod_{i=1}^{k+1} x_i^{v_i}\right) v.
\end{equation}
Note that, for all $1\leq j\leq k+1$,
\[x_j \frac{\partial}{\partial x_j} \left(x_1+\cdots+x_{k+1}\right)^m
=x_j \frac{\partial}{\partial x_j} 
\left(\sum_{v\in\Sigma_m^{(k+1)}} \binom{m}{v_1, \ldots, v_{k+1}} \left(\prod_{i=1}^{k+1} x_i^{v_i}\right)\right)
= \sum_{v\in\Sigma_m^{(k+1)}} \binom{m}{v_1, \ldots, v_{k+1}} \left(\prod_{i=1}^{k+1} x_i^{v_i}\right) v_j,\]
implying that
\[m x = \sum_{v\in\Sigma_m^{(k+1)}} \binom{m}{v_1, \ldots, v_{k+1}} \left(\prod_{i=1}^{k+1} x_i^{v_i}\right) v_j.\]
Therefore, Equation~\eqref{eq:Sx} implies
\[
Sx = \sum_{v\in\Sigma_m^{(k+1)}} \binom{m}{v_1, \ldots, v_{k+1}} 
\left(\prod_{i=1}^{k+1} x_i^{v_i}\,\indi_{v_i\neq 0}\right) \left(R(v)-\sigma v\right) + \sigma m x,
\]
and thus, for all $x = x_1 \bs e_1 +  \cdots + x_{k+1}\bs e_{k+1}\in\Sigma^{(d)}$,
\[\sum_{v\in\Sigma_m^{(k+1)}} \binom{m}{v_1, \ldots, v_{k+1}} 
\left(\prod_{i=1}^{k+1} x_i^{v_i}\,\indi_{v_i\neq 0}\right) \left(R(v)-\sigma v\right)=0.\]
Take $x_p = t\in(0,1)$ and $x_i= (1-t)/k$ for all $i\in\{1, \ldots, k+1\}\setminus\{p\}$.
To all $v\in\Sigma_m^{\sss (k+1)}$, we associate bijectively the couple $(v_p, v^*_p)$, 
where $v_p$ is the $p$-th coordinate of $v$ and where 
$v^*_p\in\Sigma_{m-v_p}^{\sss (k)}$ is the vector whose~$k$ coordinates are~$v_1, \ldots, v_{p-1}, v_{p+1}, \ldots, v_{k+1}$. 
With these notations, we have 
\begin{align*}
0
&=\sum_{v\in\Sigma_m^{(k+1)}} \binom{m}{v_1, \ldots, v_{k+1}} (\nicefrac t k)^{v_p} (1-t)^{m-v_p} \left(R_p(v)-\sigma v_p\right)\\
&=\sum_{v_p=1}^m \sum_{v^*_p\in\Sigma^{(k)}_{m-v_p}} \binom m {v_1, \ldots, v_{k+1}}  (\nicefrac t k)^{v_p} \left(R_p(v)-\sigma v_p\right) \sum_{\ell=0}^{m-v_p} \binom{m-v_p}{\ell}(-t)^{\ell}\\
&=\sum_{u=1}^{m} t^u \sum_{v_p=1}^{u} \sum_{v^*_p\in\Sigma^{(k)}_{m-v_p}} \binom m {v_1, \ldots, v_{k+1}} \binom{m-v_p}{u-v_p}  k^{-v_p} \left(R_p(v)-\sigma v_p\right)  (-1)^{u-v_p},
\end{align*}
implying that, for all $1\leq u\leq m$,
\[\sum_{v_p=1}^u \binom{m-v_p}{u-v_p} k^{-v_p} (-1)^{u-v_p}
\sum_{v^*_p\in\Sigma^{(k)}_{m-v_p}} \binom m {v_1, \ldots, v_{k+1}} \left(R_p(v)-\sigma v_p\right) = 0.\] 
This equation for $u=1$ gives that $R_p(v) = \sigma$ for all vector $v$ such that $v_p=1$. 
Using the above equation for $u=2$, one can then induce that $R_p(v) = 2\sigma$ for all vector $v$ such that $v_p = 2$, and, inductively, 
prove that $R_p(v)=\sigma v_p$ for all $1\leq p\leq k+1$.

In total, for all $v\in\Sigma_m^{(k+1)}$, we have $R(v) = \sigma v$, which concludes the induction argument.
\end{proof}

\begin{proof}[Proof of Theorem~\ref{th:diag}]
We have that (since we assume that the urn is balanced)
\begin{equation}\label{eq:diag}
Z_{n+1} = Z_n + \frac1{T_0 + (n+1)S} \big(\Delta M_{n+1} + \varepsilon_{n+1}\big),
\end{equation}
where we recall that $\Delta M_{n+1}$ is the increment of a martingale 
and $\|\varepsilon_{n+1}\| = \mathcal O(\nicefrac1n)$ 
almost surely when $n$ tends to infinity.
We infer that, for all $1\leq k\leq d$,
\begin{equation}\label{eq:martingale}
Z_{n,k} = Z_{0,k} + \sum_{i=0}^{n-1} \frac{\Delta M_{i+1,k}}{T_0+ (i+1)S} + \sum_{i=0}^{n-1} \frac{\varepsilon_{i+1,k}}{T_0+(i+1)S}.
\end{equation}
Note that the last sum of this last equation converges almost surely since either $\varepsilon_i = 0$ for all $i$, 
or $|\varepsilon_{i,k}|\leq \|\varepsilon_i\|=\mathcal O(\nicefrac1i)$ almost surely.
The first sum in Equation~\eqref{eq:martingale} is a martingale and its quadratic variation is given by
\[\mathtt{Hook}_n := \sum_{i=0}^{n-1} \frac{\mathbb E[\Delta M_{i+1,k}^2 | \mathcal F_i]}{(T_0+ (i+1)S)^2}.\]
Recall that $\Delta M_{i+1,k}^2$ is almost surely bounded by $4(\|R\|+S)^2$ implying that $\mathtt{Hook}_n$ is almost surely convergent when $n$ goes to infinity. 
Therefore, the martingale itself, i.e. the first sum in Equation~\eqref{eq:martingale} converges almost surely to a finite random variable.
In total, $Z_n$ converges almost surely to a random vector $Z_{\infty}$.
\end{proof}

\section{The two-colour particular case (proof of Corollary~\ref{th:2col})}\label{sec:two_colour}
In the two-colour balanced case, the renormalised composition vector $Z_n$ of the urn at time $n$ is actually characterised by its first coordinate since $Z_{n,1}+Z_{n,2}=1$. Thus, it is enough to study this first coordinate, and the problem becomes unidimensional and thus slightly easier. Although Corollary~\ref{th:2col} could be deduced from Theorem~\ref{th:main}, we give here a stand-alone and much simpler proof:
we apply directly a result about one-dimensional stochastic algorithms (see~\cite[Corollary 2.7 and Theorem~2.9]{Pemantle}).
Let
\[g(x):= \sum_{k=0}^m \binom{m}{k} x^k (1-x)^{m-k} (a_{m-k}-S x),\]
then
\[X_{n+1}-X_n = \frac1{n}\left(\frac{g(X_n)}{S} + \Delta M_{n+1} + \varepsilon_{n+1}\right),\]
where
\[\Delta M_{n+1} = \frac1S\big(Y_{n+1}- \mathbb E [Y_{n+1}|\mathcal F_n]\big),\]
with
$Y_{n+1} := a_{m-\zeta_{n+1}} - S Z_n$,
and where
$\varepsilon_{n+1}=0$ in the {\it with}-replacement case
and $\varepsilon_{n+1}$ satisfies $|\varepsilon_n| = \mathcal O(\nicefrac1n)$ almost surely when $n$ tends to infinity
in the {\it without}-replacement case.

Since $g(x)$ is a polynomial, either it is identically null, or it has isolated zeros.
The case $g\equiv 0$ is called the {\it diagonal} case and is treated separately.
If $g\not\equiv 0$, then $g$ has at most $m$ zeros on $[0,1]$.
In view of Corollary~2.7 and Theorem~2.9 in~\cite{Pemantle}, we can conclude that
$X_n$ converges almost surely to one of the zeros of $g$ where $g'$ is non-positive,
depending on the initial composition of the urn.
This proves Corollary~\ref{th:2col}$(a)$.

If we assume that $X_n$ converges almost surely to some stable zero $\theta_\star$ of $g$, 
then we can apply Theorem~\ref{CLT-Pages}. We have
\[\mathbb E[(\Delta M_{n+1})^2 | \mathcal F_n] \to 
\Gamma := \frac1{S^2} \sum_{k=0}^m \binom mk \theta_\star^k (1-\theta_\star)^{m-k} (a_{m-k}-S\theta_\star)^2.\]
Finally let $\Lambda = -\nicefrac{g'(\theta_\star)}{S}$, and the following central limit theorem holds:
\begin{itemize}
\item If $\Lambda >\nicefrac 12$, then $\sqrt n(X_n - \theta_\star) \stackrel{n\to \infty}{\longrightarrow} \mathcal N\big(0,\frac{\Gamma}{2\Lambda-1}\big)$, in distribution.
\item If $\Lambda = \nicefrac 12$, then $\sqrt{\nicefrac{n}{\log n}} (X_n-\theta_\star) \stackrel{n\to \infty}{\longrightarrow} \mathcal N(0,\Gamma)$, in distribution.
\item If $\Lambda <\nicefrac 12$, then $n^{\Lambda}(X_n - \theta_\star)$ converges almost almost surely to a random variable.
\end{itemize}
This concludes the proof of Corollary~\ref{th:2col}.

{\bf Diagonal case. }
In the two-colour case, assuming both that the urn is balanced and that $g\equiv 0$ implies that
there exists an integer $q$ such that
\[R=
\begin{pmatrix}
mq & 0\\
(m-1)q & q\\
\vdots & \vdots\\
0 & mq\\
\end{pmatrix}.
\]
This case is treated by Kuba and Mahmoud~\cite{KM} 
and Kuba and Sulzbach~\cite{KS}.

\section{Examples}\label{sec:examples}

\subsection{Two-colour examples}
The first four examples are two-colour examples: we thus apply Corollary~\ref{th:2col} to them.
We choose to focus on non-affine examples to which the results of Kuba \& Mahmoud~\cite{KM} do not apply;
note that affine models correspond to the case when $g$ is linear.

\vspace{\baselineskip}
{\bf Example 4.1.1:}
Take
\[R =
\begin{pmatrix}
1 & 2\\
2 & 1\\
1 & 2
\end{pmatrix}.\]
In that case, $g(x)=(1+x)(1-2x)$, whose single zero in $[0,1]$ is $\nicefrac12$.
Thus, almost surely when $n$ goes to infinity, 
the proportion of white balls in the urn at time $n$ converges 
to $\nicefrac12$.
In this particular case, $S=3$, and $\Lambda=-g'(\nicefrac12)/S = 1 > \nicefrac12$.
One can check that $\Gamma=\nicefrac1{36}$, and since $2\Lambda-1 = 1$, we get, by Corollary~\ref{th:2col}$(b)$,
\[\sqrt n(X_n - \nicefrac12) \to \mathcal N(1,\nicefrac1{36}),\]
in distribution when $n$ tends to infinity.

\vspace{\baselineskip}
{\bf Example 4.1.2:}
Take
\[R =
\begin{pmatrix}
4 & 0\\
1 & 3\\
1 & 3
\end{pmatrix}.\]
In that case, $g(x)=(1-x)(1-3x)$, whose two zeros in $[0,1]$ are $1$ and $\nicefrac13$.
Note that $g'(1)=2$ and $g'(\nicefrac13)=-2$, thus, almost surely when $n$ tends to infinity,
the proportion of white balls in the urn converges to $\nicefrac13$.
Note that $\Lambda=-\nicefrac{g'(\nicefrac13)}{S} = \nicefrac12$.
Also, one can check that $\Gamma=\nicefrac1{18}$, 
which implies, by Corollary~\ref{th:2col}$(b)$,
\[\sqrt{\nicefrac n{\log n}}(X_n - \nicefrac13) \to \mathcal N(1,\nicefrac1{18}),\]
in distribution when $n$ tends to infinity.

{\bf Example 4.1.3:} Take
\[R =
\begin{pmatrix}
7 & 1\\
3 & 5\\
1 & 7
\end{pmatrix}.\]
Then $g(x) = 2x^2-4x+1$ and this polynomial
has a unique root in $[0,1]$, given by $\theta_\star=1-\nicefrac{\sqrt 2}2$.
Applying Theorem~\ref{th:main}$(a)$, we get that the proportion of white balls $X_n$ converges almost
surely to $1-\nicefrac{\sqrt 2}2$. 
Moreover, $\Lambda = -g'(\nicefrac12)/8 = \nicefrac{\sqrt2}4 < \nicefrac 12$.
Thus, by Corollary~\ref{th:2col}$(b)$, there exists a random variable $\Psi$ such that
\[n^{\nicefrac{\sqrt 2}4}(X_n-\nicefrac12)\to \Psi,\]
almost surely when $n$ tends to infinity.

\vspace{\baselineskip}
{\bf Example 4.1.4:}
Take
\[R =
\begin{pmatrix}
6 & 0\\
3 & 3\\
1 & 5
\end{pmatrix}.\]
In this case, $g(x)=(x-1)^2$, implying that $1$ is the unique zero of $g$ in $[0,1]$.
Thus, the proportion of white balls in the urn converges almost surely to $1$.
We have $g'(1)=0$, which makes us unable to apply Corollary~\ref{th:2col}$(b)$: 
we have no information about the speed of convergence of the proportion of white balls to its limit.

\vspace{\baselineskip}
\begin{figure}
\begin{center}
\includegraphics[width=7cm]{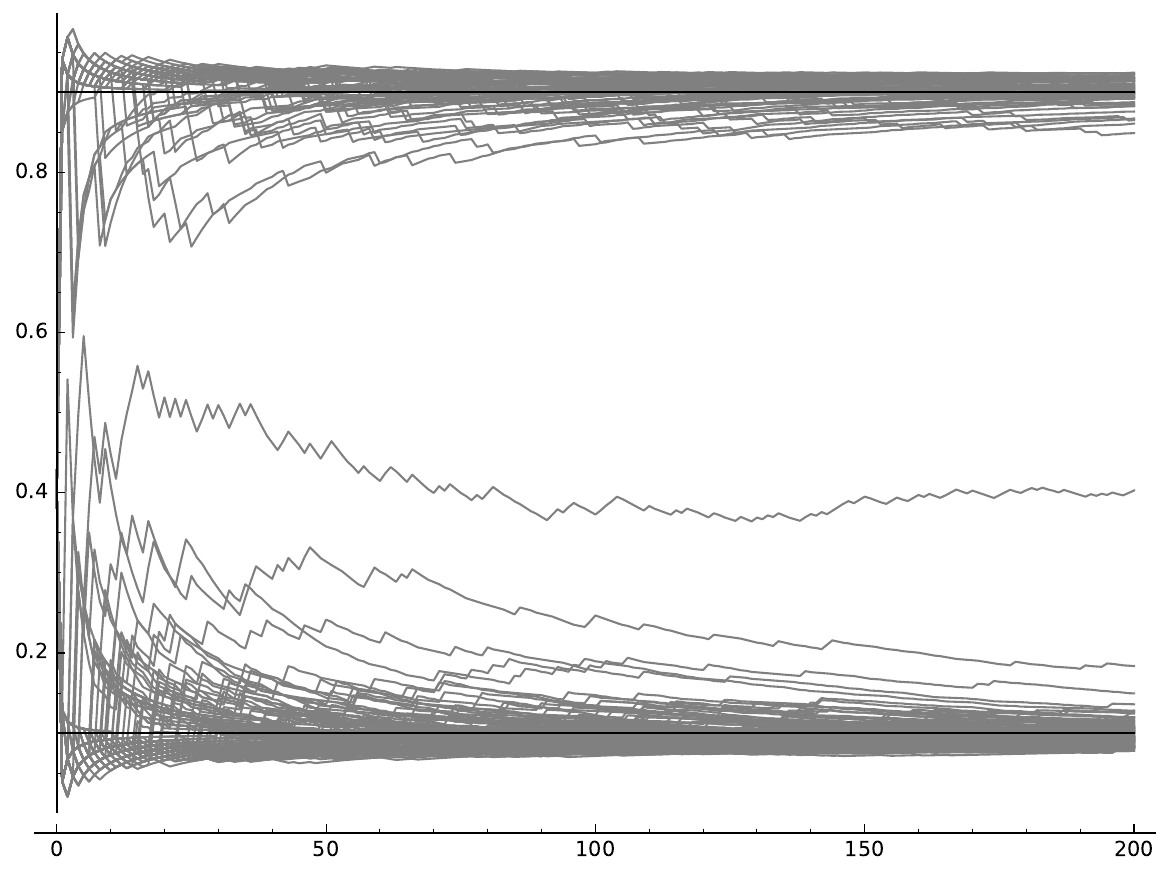}
\caption{A hundred realisations of the urn process of Example 4.1.5, all starting at $(4,6)$ and run for 200 steps. 
One can see that each trajectory converges to one of the two stable zeros of $g$, namely $\nicefrac1{10}$ and $\nicefrac9{10}$.}
\label{fig:46}
\end{center}
\end{figure}
{\bf Example 4.1.5:} 
Let us consider an example for which $m=3$:
\[R = 
\begin{pmatrix}
82 & 9\\
91 & 0\\
0 & 91\\
9 & 82
\end{pmatrix}.
\]
In that case, $S=91$, and $g(x) = -200(x-\nicefrac1{10})(x-\nicefrac12)(x-\nicefrac{9}{10})$ admits three zeros on $[0,1]$, 
namely $\nicefrac1{10}, \nicefrac12$, and $\nicefrac9{10}$.
Note that $g'(\nicefrac12)>0$ while $\nicefrac1{10}$ and $\nicefrac{9}{10}$ are stable zeros of $g$.
Therefore, the proportion of white balls in the urn converges to some random variable $\theta_\star$, 
and $\theta_\star\in\{\nicefrac1{10}, \nicefrac9{10}\}$ almost surely (see Figure~\ref{fig:46}).
Note that $g'(\nicefrac13)=g'(\nicefrac23)=-64$ implying that $\Lambda=\nicefrac{64}{91}>\nicefrac12$.
Therefore, one can check that
$\sqrt n(X_n - \theta_\star)$ converges in distribution to a centred normal distribution of variance $\nicefrac{4131}{67340}$.

Note that although we know that $X_n$ goes almost surely to a random variable $X_{\infty}$ whose support is $\{\nicefrac1{10}, \nicefrac9{10}\}$, it is an open problem to determine the distribution of $X_{\infty}$, and it is expected that this distribution depends on the initial composition of the urn $X_0$.

\subsection{Three-colour examples}
For all three-colour examples, we use a computer algebra system to help with the calculations.
Note that it is in practise harder to apply Theorem~\ref{th:main}$(a)$ than Theorem~\ref{th:main}$(b)$;
the strategy is to first find the zeros of the function $h$, then calculate the Jacobian of $h$ at these zeros, 
extract their spectrum and isolate the zeros whose Jacobians only have negative eigenvalues. 
In all the examples below, there is at most one such zero and
it is our candidate for the limit of the normalised composition vector $Z_n$;
if there is no such zero, we cannot say more about the considered urn (see example 4.2.5) 
except that its limit set is a compact connected set stable by the flow of~$\dot{x} = h(x)$.

Once we have a candidate $\theta$ to be the limit of $Z_n$, 
we need to check (A1) and (A2). 
All the examples below are given for $m=2$, 
since it implies that $\langle h(z), z-x\rangle$ for $x\in\mathcal Z(h)$
is a polynomial of order at most~$3$ and the set where it is negative (for (A1)) or positive (for (A2-i)) can be calculated exactly (we do it with a computer algebra system, but this can be done by hand on simple examples). Checking (A2-ii) is usually straightforward, and checking (A2-iii) is done using Borel-Cantelli's lemma.

{\bf Example~4.2.1:} We consider the three-colour urn scheme defined by the following replacement rule:
\[\begin{matrix}
R(2,0,0)=(1,0,0); & & R(0,1,1)=(1,0,0);\\
R(0,2,0)=(0,1,0); & & R(1,0,1)=(0,1,0);\\
R(0,0,2)=(0,0,1); & & R(1,1,0)=(0,0,1).\\
\end{matrix}\]
In that case, 
\[h(x) = \begin{pmatrix}
x_1^2 + 2x_2 x_3 - x_1\\
x_2^2 + 2x_1 x_3 - x_2\\
x_3^2 + 2x_2 x_3 - x_3
\end{pmatrix}.\]
The function $h(x)$
admits four zeros on the simplex $\Sigma_2^{\scriptscriptstyle (3)}$ 
being given by $(1,0,0)$, $(0,1,0)$, $(0,0,1)$ and $(\nicefrac13,\nicefrac13,\nicefrac13)$.

We recall that the function $h$ is a function from $\Sigma^{\sss (d)}$ onto $\{(x_1, x_2, x_3)\in\mathbb R^d \colon x_1+x_2+x_3 = 0\}$, two spaces of dimension~2, therefore, the Jacobian matrix of $h$ at any point of $\Sigma^{\sss (d)}$ is a $2\times 2$ matrix.
The eigenvalues of the Jacobian matrix of $h$ at the first three zeros mentioned above are $1$ and $-3$. 
The eigenvalues of $\nabla h(\nicefrac13,\nicefrac13,\nicefrac13)$ are $-1$ (with multiplicity $2$), thus this zero is stable and is our candidate to be the almost sure limit of $Z_n$. We actually have $\nabla h(\nicefrac13,\nicefrac13,\nicefrac13)=-\mathtt{Id}_2$. 

One can check that
the only solutions of $\langle h(x), x-\theta \rangle = 0$ on $\Sigma^{\scriptscriptstyle (3)}$, 
where $\theta = (\nicefrac13,\nicefrac13,\nicefrac13)$, are the four zeros of~$h$.
Since $\theta$ is a stable zero of $h$, $\langle h(x), x-\theta \rangle<0$  in a neighbourhood of $\theta$, 
and thus, by continuity, $\langle h(x), x-\theta \rangle < 0$ for all $x$ in $\Sigma^{\scriptscriptstyle (3)}$ 
such that $h(x)\neq 0$.

One can check that
the only solutions of $\langle h(x), x-\theta \rangle = 0$ on $\Sigma^{\scriptscriptstyle (3)}$, 
where $\theta = (\nicefrac13,\nicefrac13,\nicefrac13)$, are the four zeros of~$h$.
This implies (A1).

To prove (A2-i), first note that, by symmetry, it is enough to prove that $\langle h(u), u-(1,0,0)\rangle\geq 0$ in a neighbourhood of $(1,0,1)$ in $\Sigma^{\sss (d)}$.
A straightforward calculation gives
\[\langle h(u), u-(1,0,0)\rangle = 3u_2u_3(2-3(u_2+u_3)),\]
and this quantity is non-negative as soon as $u_2+u_3\leq \nicefrac23$,
which implies Assumption (A2-i).

For (A2-ii), note that, as long as $Z_0\notin \mathcal Z(h)\setminus\{\theta\}$, i.e. as long as the urn contains balls of more then one colour at time zero, we have that
$Z_n\notin \mathcal Z(h)\setminus\{\theta\}$ for all $n\geq 0$.

Finally, in the without-replacement case, we use Borel-Cantelli lemma to show that $U_{n,i}\to+\infty$ almost surely as $n\to+\infty$. 
Indeed, let $\mathcal E_{i,n}$ the event that at least one ball of colour $i$ as added to the urn at time~$n+1$. For $\mathcal E_{i,n}$ to occur, the handful drawn at time $n$ needs to be $(2, 0, 0)$ or $(0,1,1)$, which implies
\[\mathbb P(\mathcal E_{1,n}\mid \mathcal F_n) 
= \frac{U_{n,1}(U_{n,1}-1)+2U_{n,2}U_{n,3}}{T_n(T_n-1)}.\]
Note that $T_n= T_0+2n\geq 2n$, which implies $\min(U_{n,1}, U_{n,2}, U_{n,3})\geq \nicefrac{2n}3$.
Furthermore, under the assumption that at time zero, there is at least one ball of each colour, we have $U_{n,i}\geq 1$ for all $n\geq 0$, for all $1\leq i\leq 3$, and thus
$\mathbb P(\mathcal E_{1,n}) \geq (\nicefrac13+o(1))/n$ as $n\to+\infty$.
Therefore, by Borel-Cantelli lemma, $U_{n,1}\to+\infty$ almost surely when $n\to+\infty$.
By symmetry, this is also the case for $U_{n,2}$ and $U_{n,3}$, and (A2-iii) holds.

Thus, if at time zero there is at least one ball of each colour in the urn, then all assumptions of Theorem~1$(a)$ are satisfied and $Z_n \to (\nicefrac13, \nicefrac13, \nicefrac13)$ almost surely when $n\to+\infty$ in both the with- and the without-replacement cases. In the with-replacement case, one can replace the assumption that $\min(U_{n,1}, U_{n,2}, U_{n,3})\geq 1$ by $\max(U_{0,2}, U_{0,3})\geq 1$ (because (A2-iii) is only required in the without-replacement case).

We now apply Theorem~\ref{th:main}$(b)$.
One can check that
\begin{equation}\label{eq:Gamma_ex1}
\Gamma=\frac19 \begin{pmatrix}
2 & -1 & -1\\
-1 & 2 & -1\\
-1 & -1 & 2\\
\end{pmatrix}
\end{equation}
and
\[\Sigma= \int_0^{+\infty} (\mathtt e^{-\nicefrac u2} \mathtt{Id})^t\,\Gamma (\mathtt e^{-\nicefrac u2}\mathtt{Id})\,du = \Gamma.\]
Finally note that the eigenvalue of $-\nabla h(\theta)$ with the smallest real part is $\Lambda=1>\nicefrac S2=\nicefrac12$. We have that $2\mathtt{Re}(\Lambda)/S-1 = 1$, implying that,
in distribution when $n$ tends to infinity,
\[\sqrt n(Z_n - \nicefrac{\boldsymbol 1}{3}) \to \mathcal N(0,\Gamma),\]
where $\boldsymbol 1 = (1,1,1)$ and $\Gamma$ given by Equation~\eqref{eq:Gamma_ex1}.

Since $Z_n$ and $\nicefrac{\boldsymbol 1}{3}$ are in the simplex $\Sigma^{\sss (d)}$,
we know that $\Sigma = \Gamma$ are of rank at most~2, and that, in particular,
\[\Sigma\cdot (1,1,1)^t = \Gamma\cdot (1,1,1)^t = 0,\]
which can indeed be checked using~\eqref{eq:Gamma_ex1}.

\vspace{\baselineskip}
{\bf Example~4.2.2:} Consider the three-colour urn scheme defined by the following replacement rule:
\[\begin{matrix}
R(2,0,0)=(2,0,0); & & R(0,1,1)=(0,1,1);\\
R(0,2,0)=(1,0,1); & & R(1,0,1)=(0,2,0);\\
R(0,0,2)=(1,1,0); & & R(1,1,0)=(0,0,2).\\
\end{matrix}\]
In that case, 
\[h(x) = \begin{pmatrix}
2x_1^2 + x_2^2 + x_3^3 - 2x_1\\
x_3(4x_1+2x_2+x_3) -2x_2\\
x_2(4x_1+x_2+2x_3) - 2x_3
\end{pmatrix}.\]
One can check that $h(x)$
admits two zeros on the simplex $\Sigma_2^{(3)}$ 
being given by $(1,0,0)$, $(\nicefrac15,\nicefrac25,\nicefrac25)$.
The eigenvalues of the Jacobian matrix of $h$ at $(1,0,0)$ are $2$ and $-6$.
The eigenvalues of $\nabla h(\nicefrac15,\nicefrac25,\nicefrac25)$
are $-2$ and $-\nicefrac{18}5$, 
thus this zero is stable and is our candidate to be the almost sure limit of~$Z_n$.

Using a computer algebra software, we can also check that the only solutions 
of $\langle h(x), x-\theta\rangle = 0$ are $(1,0,0)$ and $\theta = (\nicefrac15, \nicefrac25, \nicefrac25)$.
Since $\theta$ is a stable zero, $\langle h(x), x-\theta \rangle$ is negative in a neighbourhood of $\theta$, 
and, by continuity, it is negative on $\Sigma^{\scriptscriptstyle (3)}\setminus\{(1,0,0), \theta\}$.

Unfortunately, (A2-i) does not hold for this example, and we thus cannot apply Theorem~1$(a)$.
However, simulations (see Figure~\ref{fig:1}) seem to indicate that, indeed, $Z_n\to\theta$.

\begin{figure}
\vspace{10pt}
\centering
\includegraphics[trim= 0 0 0 1.5cm, width=7cm]{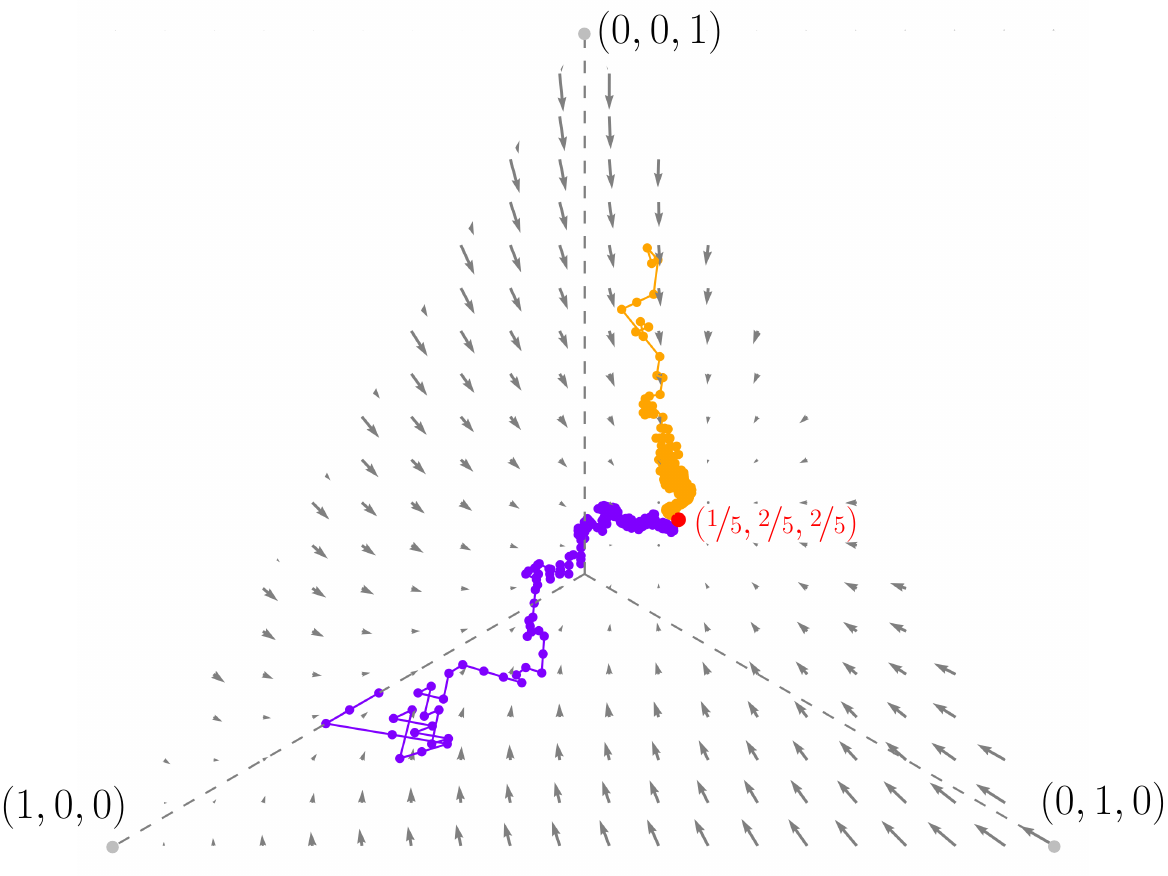}
\caption{Two realisations of the urn of Example~4.2.2 starting respectively from $(10, 3, 3)$ and $(2, 6, 20)$, and run for 400 steps. 
The gray arrows are the vector field associated to~$h$ on the simplex $\Sigma^{\sss (3)}$; one can in particular see that $(1,0,0)$ is indeed an unstable zero while $(\nicefrac15, \nicefrac25, \nicefrac25)$ is stable.}
\label{fig:1}
\end{figure}

We now apply Theorem~\ref{th:main}$(b)$ on the event that $Z_n\to \theta$.
One can calculate that
\begin{equation}
\label{eq:Sigma_ex2}
\Gamma=\frac1{25} \begin{pmatrix}
2 & -1 & -1\\
-1 & 3 & -2\\
-1 & -2 & 3\\
\end{pmatrix},\quad
\text{ and }\quad
\Sigma= \frac1{25}
\begin{pmatrix}
2 & -1 & -1\\
-1 & 19 & -6\\
-1 & -6 & 19\\
\end{pmatrix}.
\end{equation}
Finally note that the eigenvalue of $-\nabla h(\theta)$ with the smallest real part is $\Lambda:=2>\nicefrac S2=1$, implying that, on the event $Z_n\to\theta$,
in distribution when $n$ tends to infinity, since $\nicefrac{2\mathtt{Re}(\Lambda)}S - 1 = 1$,
\[\sqrt n(Z_n - (\nicefrac15,\nicefrac25,\nicefrac25)^t) \to \mathcal N(0, \Sigma),\]
where $\Sigma$ is given by Equation~\eqref{eq:Sigma_ex2}. 
(Again, one can check that, as expected, $\Sigma\cdot (1,1,1)^t = 0$.)

\vspace{\baselineskip}
{\bf Example~4.2.3:}
Consider the three-colour urn scheme defined by the following replacement rule:
\[\begin{matrix}
R(2,0,0)=(3,0,0); & & R(0,1,1)=(3,0,0);\\
R(0,2,0)=(0,3,0); & & R(1,0,1)=(1,1,1);\\
R(0,0,2)=(0,0,3); & & R(1,1,0)=(1,1,1).\\
\end{matrix}\]
\begin{figure}[!b]
\begin{center}
\includegraphics[width=8cm]{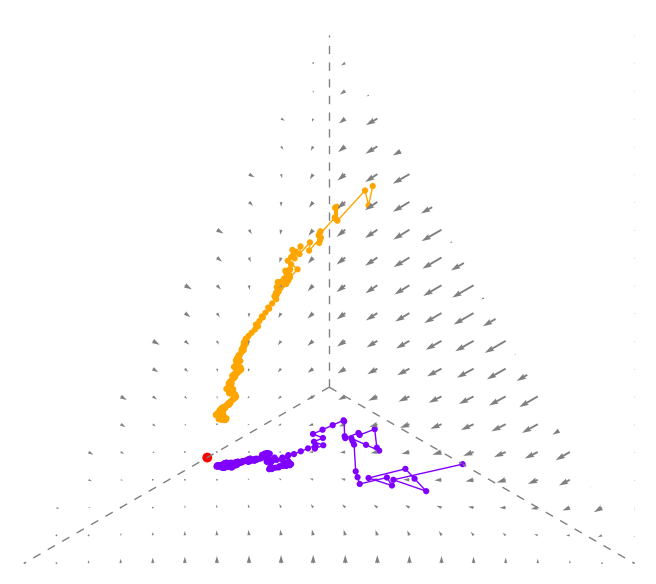}
\includegraphics[width=8cm]{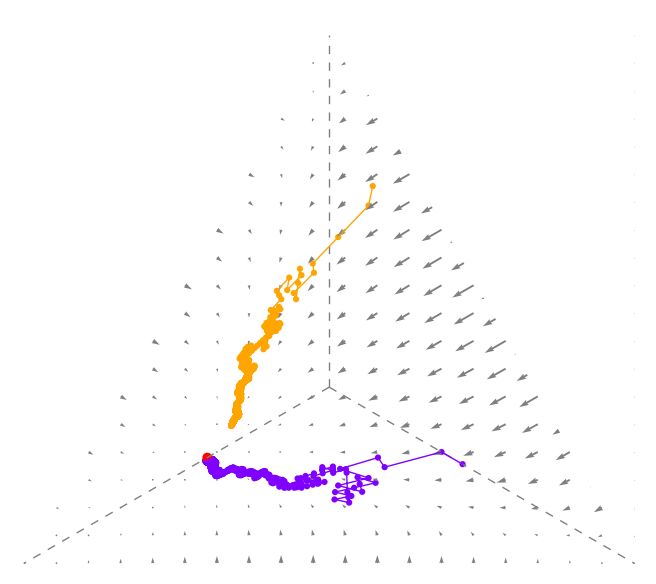}
\end{center}
\caption{Two realisations of the urn of Example 4.2.3 starting respectively from $(3, 10, 3)$ and $(2, 5, 20)$, 
run for 400 steps (left) and 4000 steps (right). Both realisations converge to the stable zero $(\nicefrac15,\nicefrac25,\nicefrac25)$;
one can compare the speed of convergence with the faster Example 4.2.2 displayed in Figure~\ref{fig:1}.}
\end{figure}
In that case, 
\[h(x)=\begin{pmatrix}
3 x_1^2 + 2x_1 x_2 + 2 x_1 x_3 + 6 x_2 x_3 - 3x_1\\
3 x_2^2 + 2 x_1 x_2 + 2 x_1 x_3 - 3 x_2\\
3 x_3^2 + 2 x_1 x_2 + 2 x_1 x_3 - 3 x_3\\
\end{pmatrix}
\]
one can check that $h(x)$ admits four zeros on the simplex $\Sigma_2^{(3)}$, 
given by $(1,0,0)$, $(0,1,0)$, $(0,0,1)$ and $(\nicefrac35, \nicefrac15, \nicefrac15)$. 
The eigenvalues of $\nabla h$ at the first three zeros are $-2-\sqrt{13}$ and $-2+\sqrt{13}$.
The eigenvalues of $\nabla h(\nicefrac35, \nicefrac15, \nicefrac15)$ are $-1$ and $-\nicefrac95$. 
Thus $\theta = (\nicefrac35, \nicefrac15, \nicefrac15)$ is a stable zero of $h$.

One can also check that the only solutions of $\langle h(x), x- \theta \rangle = 0$ 
on $\Sigma^{\scriptscriptstyle (3)}$ are the zeros of $h$. 
Using the fact that $\langle h(x), x- \theta \rangle<0$ on a neighbourhood of $\theta$, thus on the whole domain except at the zeros of $h$, we get that (A1) is satisfied.

One can check that for all $u\in\Sigma^{\sss (d)}$,
\[\langle h(u), u-(1,0,0)\rangle = 3u_2u_3(2-5(1-u_1))\geq 0\]
in a neighbourhood of $(1,0,0)$.
We also have
\[\langle h(u), u-(0,1,0)\rangle =  3u_1u_3(4-5(1-u_2))\geq 0\]
in a neighbourhood of $(0,1,0)$; 
by symmetry, the analogue statement holds for $(0,0,1)$,
which concludes the proof of (A2-i).

If at time zero, there is at least one ball of colour 2 or 3 in the urn, 
then $Z_n\notin\mathcal Z(h)\setminus\{\theta\}$, and thus (A2-ii) holds.

For (A2-iii), we use Borel-Cantelli's lemma: let $\mathcal E_{n,i}$ be the probability that at least one ball of colour $i$ is added in the urn at time $n+1$. In the without-replacement case,
\[\mathbb P(\mathcal E_{1,n}\mid \mathcal F_n)
= \frac{U_{n,1}(U_{n,1}-1)+U_{n,1}U_{n,2} + U_{n,1}U_{n,3}+U_{n,2}U_{n,3}}{T_n(T_n-1)}.\]
Because the urn is balanced, 
we have $T_n = T_0+ 3n\geq 3n$, implying that $\min(U_{n,1}, U_{n,2}, U_{n,3})\geq n$.
If we assume that, at time zero, there is at least one ball of colour 2 and one ball of colour 3, then
$\mathbb P(\mathcal E_{1,n}\mid \mathcal F_n)\geq (\nicefrac19+o(1))/n$, which, by Borel-Cantelli's lemma, implies that $U_{n,1}\to+\infty$ almost surely as $n\to+\infty$.
Similar arguments implie that $U_{n,2}$ and $U_{n,3}$ also diverge almost surely as long as $\min(U_{0,2}, U_{0,3})\geq 1$.

Thus Theorem 1$(a)$ implies and that, 
if $\min(U_{0,2}, U_{0,3})\geq 1$, then almost surely when $n\to+\infty$,
$Z_n \to (\nicefrac35, \nicefrac15, \nicefrac15)$ in both the with- and the without-replacement cases.
In the with-replacement case, one can replace the assumption $\min(U_{0,2}, U_{0,3})\geq 1$ by $\max(U_{0,2}, U_{0,3})\geq 1$ (because (A2-iii) is trivial in this case).

We can now apply Theorem~\ref{th:main}$(b)$. 
In this case, $\Lambda = 1<\nicefrac S2 = \nicefrac32$, 
and the Jordan block of $\nabla h(\theta)$ associated to~1 has size~1 
(since $\nabla h$ has two distinct eigenvalues given by $-1$ and $-\nicefrac95$),
implying that $n^{\nicefrac13}(Z_n-(\nicefrac35, \nicefrac15, \nicefrac15)^t)$ converges almost surely to a random variable $\Psi$.

\vspace{\baselineskip}
{\bf Example 4.2.4:}
Consider the three colour urn scheme defined by the following replacement rule:
\[\begin{matrix}
R(2,0,0)=(0,0,2); & & R(0,1,1)=(0,1,1);\\
R(0,2,0)=(0,0,2); & & R(1,0,1)=(1,0,1);\\
R(0,0,2)=(0,0,2); & & R(1,1,0)=(1,1,0).\\
\end{matrix}\]
We have
\[h(x)=2\begin{pmatrix}
x_1x_2 + x_1x_3 - x_1\\
x_1x_2 + x_2 x_3 - x_2\\
x_1^2 + x_2^2 + x_3^2 + x_1x_3 + x_2 x_3 - x_3
\end{pmatrix}.\]
The function $h$ admits a unique zero on the simplex $\Sigma_2^{(3)}$ given by $(0,0,1)$.
The eigenvalues of $\nabla h(0,0,1)$ are $0$ (with multiplicity 2).

We can solve explicitly $\langle h(x), x-(0,0,1)\rangle$ on $\Sigma^{\scriptscriptstyle (3)}$ and check that $(0,0,1)$ is the only solution. Also, $\langle h(1,0,0), (1,0,0)-(0,0,1)\rangle = -4$. 
By continuity, $\langle  h(x), x-(0,0,1)\rangle <0$ for all $x\in\Sigma^{\scriptscriptstyle (3)}\setminus\{(0,0,1)\}$, meaning that (A1) is satisfied. (A2) holds trivially since $\mathcal Z(h)\setminus\{\theta\} = \varnothing$. Thus Theorem~\ref{th:main}$(a)$ applies, and 
the renormalised composition vector $Z_n$ converges almost surely to $(0,0,1)$.

Nothing can be said about the rate of convergence since $\nabla h(0,0,1)$ admits zero as an eigenvalue (and Theorem~\ref{th:main}$(b)$ thus does not apply).

\vspace{\baselineskip}
{\bf Example 4.2.5: Rock-paper-scissor game~\cite{LL}.}
Consider the three colour urn scheme defined by the following replacement rule:
\[\begin{matrix}
R(2,0,0)=(1,0,0); & & R(0,1,1)=(0,1,0);\\
R(0,2,0)=(0,1,0); & & R(1,0,1)=(0,0,1);\\
R(0,0,2)=(0,0,1); & & R(1,1,0)=(1,0,0).\\
\end{matrix}\]
This urn scheme is studied in Laslier \& Laslier~\cite{LL} as a model for the famous {\it rock-paper-scissors} game;
the first colour represents {\it scissors}, the second {\it paper} and the third {\it rock},
and the replacement rule encodes the standard rules of the game 
(scissors wins against paper, paper wins against rock and rock wins against scissors).
We have
\[h(x) = \begin{pmatrix}
x_1(x_1 + 2x_2-1)\\
x_2(x_2 + 2x_3-1)\\
x_3(x_3 + 2x_1 -1)
\end{pmatrix}.\]
The function $h$ admits four zeros, being $(1,0,0)$, $(0,1,0)$, $(0,0,1)$ and $(\nicefrac13, \nicefrac13, \nicefrac13)$, but none of them is stable.

In~\cite[Theorem~7]{LL} it is proved that the composition vector does not converge when $n$ goes to infinity, but concentrate on a cycle instead. Therefore, our Theorem~\ref{th:main} does not apply further than saying that the limit set of $(Z_n)_{n\geq 0}$ is a compact connected set.

The authors also study what the call a {\it three-alternative} scheme, in which a sample of three balls is sampled at every time step, and the replacement rule depends on the order of sampling of those balls: if one samples {\it (scissors, rock, paper)}, we get 
\begin{center}(\textit{scissors} against (\textit{rock} against \textit{paper})) = \textit{scissors} against \textit{paper} = \textit{scissors},\end{center}
while the sample {\it (paper, scissors, rock)} gives
\begin{center}(\textit{paper} against (\textit{scissors} against \textit{rock})) = \textit{paper} against \textit{rock} = \textit{paper}.\end{center}
This rock-paper-scissor game shows that an interesting extension of our model is to sample an ordered set of balls (or rather a sequence of balls) at each step and make the replacement rule depend on this sequence of $m$ randomly sampled balls.

\section{The two-colour non-balanced case (proof of Theorem~\ref{th:non-balanced})}\label{sec:non_balanced}
We have shown in this article how stochastic algorithms can be used as a powerful toolbox to study multi-drawing P\'olya urns.
Our main achievement is to remove the {\it affinity} assumption that was previously made in the literature (see~\cite{KM})
and thus considerably increase the number of urn scheme to which the strong law of large numbers and the central limit theorem apply.

The main open question is to generalise these results to the non-balanced case: 
we show in this section how to prove Theorem~\ref{th:non-balanced}.

There are two main difficulties occurring when treating the unbalanced case. Firstly,
there are only very few results in the literature on stochastic algorithms that allow the sequence $(\gamma_n)_{n\geq 0}$ to be random
(which is the case when the urn is non-balanced since $\gamma_n = \nicefrac1{T_n}$).
Secondly, we need to control the speed of convergence of the total number of balls dividing by $n$, i.e. $\nicefrac{T_n}{n}$ to its limit.

To prove Theorem~\ref{th:non-balanced}, we apply the work of Renlund~\cite{Renlund}.
In this work, a strong law of large numbers and a central limit theorem are proved, but only
in the case when the stochastic algorithm $(\theta_n)_{n\geq 0}$ is one-dimensional, 
which is why this result can only be applied to two-colour P\'olya urn schemes.

\subsection{Estimates for the total number of balls in the urn}

\begin{prop}\label{lem:cv_Tn}
If $X_n$ converges almost surely to a limit $\theta_\star$, 
then the sequence $\nicefrac{T_n}{n}$ converges almost surely to
\[\omega:=\sum_{k=0}^m \binom m k \theta_\star^k (1-\theta_\star)^{m-k} (a_{m-k}+b_{m-k}).\]
\end{prop}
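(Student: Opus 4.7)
The strategy is to decompose the increments of $T_n$ into a predictable drift with limit $\omega$ plus a bounded martingale, then apply Cesàro's lemma and the martingale strong law of large numbers.

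Let $\eta_k := T_k - T_{k-1}$ denote the (non-negative, almost surely bounded) number of balls added at step $k$; explicitly $\eta_k = a_{m-\zeta_k} + b_{m-\zeta_k} = c_{m-\zeta_k}$, where $\zeta_k$ is the number of white balls drawn at step $k$. A computation entirely parallel to the one in the proof of Lemma~\ref{lem:algo_sto} gives
\[\mathbb E[\eta_k \mid \mathcal F_{k-1}] = \phi(X_{k-1}) + \rho_{k-1}, \qquad \phi(x) := \sum_{j=0}^{m} \binom{m}{j} x^{j}(1-x)^{m-j} c_{m-j},\]
where $\rho_{k-1}$ vanishes identically in the with-replacement case, and in the without-replacement case $\rho_{k-1} = \mathcal O(1/T_{k-1})$ almost surely, via the same Taylor expansion already carried out for $\varepsilon_n$ in~\eqref{eq:r_n}. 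Setting $\Delta N_k := \eta_k - \mathbb E[\eta_k \mid \mathcal F_{k-1}]$, the process $N_n := \sum_{k=1}^{n}\Delta N_k$ is an $\mathcal F_n$-martingale and one obtains the decomposition
\[\frac{T_n - T_0}{n} = \frac{1}{n}\sum_{k=1}^{n}\phi(X_{k-1}) + \frac{1}{n}\sum_{k=1}^{n}\rho_{k-1} + \frac{N_n}{n}.\]

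Each of the three terms on the right is handled separately. Since $\phi$ is a polynomial, the hypothesis $X_n \to \theta_\star$ almost surely gives $\phi(X_{n-1}) \to \phi(\theta_\star) = \omega$ almost surely, so by Cesàro's lemma the first term converges to $\omega$. For the second, the standing hypothesis $\liminf_{n} T_n/n > 0$ implies that on an almost sure event $|\rho_k| \le C/k$ for some random constant $C$, hence $\frac{1}{n}\sum_{k=1}^{n}|\rho_{k-1}| = \mathcal O(\log n/n) \to 0$. Finally, $\eta_k \in [0, \max_{0\le i\le m} c_i]$, so $|\Delta N_k|$ is uniformly bounded by a deterministic constant; in particular $\mathbb E[\Delta N_k^2]$ is uniformly bounded, so $\sum_{k \ge 1}\Delta N_k/k$ is an $L^2$-bounded martingale and thus converges almost surely, and Kronecker's lemma yields $N_n/n \to 0$ almost surely. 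Combining these three limits gives $T_n/n \to \omega$ almost surely.

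The only genuinely delicate step is the control of $\rho_{k-1}$ in the without-replacement case, but this is essentially a direct transcription of~\eqref{eq:r_n} (with $R(v)-r(v)Z_n$ replaced by $r(v)$, which is uniformly bounded), combined with the lower-bound hypothesis on $T_k$ which turns $\mathcal O(1/T_k)$ into a summable (after dividing by $n$) error. Once this is in place, the rest is standard martingale bookkeeping.
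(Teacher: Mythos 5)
Your proof is correct and follows essentially the same route as the paper: decompose $T_n$ into its predictable drift plus a bounded martingale, apply Ces\`aro to the drift (which converges to $\omega$ since $X_n\to\theta_\star$), and kill the martingale term by a strong law of large numbers. The only differences are cosmetic --- you handle the martingale via an $L^2$-bounded series and Kronecker's lemma where the paper invokes the quadratic-variation SLLN from Duflo, and you isolate the without-replacement correction $\rho_{k-1}$ explicitly, which the paper leaves implicit.
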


\begin{proof}
In this proof, we use the notation $\mathbb E_i$ for the expectation conditionally on the canonical filtration~$\mathcal F_i$.
Write
\begin{equation}\label{eq:T_n}
\frac{T_{n+1}}{n+1} 
= \frac{T_n}{n+1} + \frac{c_{m-\zeta_{n+1}}}{n+1}
= \frac{T_0}{n+1} 
+ \frac1{n+1}\sum_{i=0}^{n} (c_{m-\zeta_{i+1}}-\mathbb E_i c_{m-\zeta_{i+1}})  
+ \frac1{n+1}\sum_{i=0}^{n} \mathbb E_i c_{m-\zeta_{i+1}},
\end{equation}
where we recall that $\zeta_{i+1}$ is the number of white balls sampled at time~$i+1$.
Note that 
\[\mathbb E_i c_{m-\zeta_{i+1}} 
= \sum_{k=0}^m \binom k i X_i^k (1-X_i)^{m-k} c_{m-k} 
\stackrel{i\to \infty}{\longrightarrow} \sum_{k=0}^m\binom k i \theta_\star^k (1-\theta_\star)^{m-k} c_{m-k} = \omega,\]
and thus, by Ces{\'a}ro's lemma, almost surely,
\[\frac1{n+1}\sum_{i=0}^{n} \mathbb E_i c_{m-\zeta_{i+1}} \stackrel{n\to \infty}{\longrightarrow} \omega.\]
Moreover, $\Delta D_{i+1} = c_{m-\zeta_{i+1}}-\mathbb E_i c_{m-\zeta_{i+1}}$ is a martingale difference sequence
such that
\[\frac{\langle D\rangle_n}{n} 
= \frac1n \sum_{i=0}^n \mathbb E_i(\Delta D_{i+1})^2 
= \frac1n \sum_{i=0}^n \Big[\mathbb E_i c^2_{m-\zeta_{i+1}}- (\mathbb E_i c_{m-\zeta_{i+1}})^2\Big].\]
Note that
\begin{align*}
\mathbb E_i c^2_{m-\zeta_{i+1}}- (\mathbb E_i c_{m-\zeta_{i+1}})^2
&= \sum_{k=0}^m \binom m k c^2_{m-k} X_i^k (1-X_i)^{m-k} - \bigg(\sum_{k=0}^m \binom m k c_{m-k} X_i^k (1-X_i)^{m-k}\bigg)^2\\
&\stackrel{i\to \infty}{\longrightarrow} \sum_{k=0}^m \binom m k c^2_{m-k} \theta_\star^k (1-\theta_\star)^{m-k} 
- \bigg(\sum_{k=0}^m \binom m k c_{m-k} \theta_\star^k (1-\theta_\star)^{m-k}\bigg)^2,
\end{align*}
thus, by Ces\'aro's lemma, $\nicefrac{\langle D\rangle_n}{n}$ converges to the same limit, and, by~\cite[Theorem~1.3.17]{Duflo}, 
we get that $\nicefrac{D_n}{n} \to 0$ almost surely when $n$ tends to infinity.
Therefore, by Equation~\eqref{eq:T_n}, $\nicefrac{T_n}{n}$ converges almost surely to $\omega$.
\end{proof}

\subsection{The associated stochastic algorithm}
As already mentioned in Section~\ref{sec:two_colour}, in the two-colour case, 
we can reduce the study to the first colour since $B_n + W_n = T_n$, and
the associated stochastic algorithm is thus one-dimensional.
\begin{lem}\label{lem:algo_sto2}
The proportion of white balls in the urn at time $n$, 
denoted by $X_n = \nicefrac{W_n}{T_n}$ satisfies the following recursion:
\[X_{n+1} = X_n + \frac1{T_{n+1}} \big(\tilde g(X_n) + \Delta M_{n+1} +\varepsilon_{n+1}\big),\]
where $\varepsilon_{n+1}$ is $\mathcal F_n$-measurable and goes almost surely to zero as $n$ goes to infinity,
\[\tilde g(x):= (1-x)\sum_{k=0}^m \binom{m}{k} a_{m-k} x^k (1-x)^{m-k}
- x \sum_{k=0}^m \binom{m}{k} b_{m-k} x^k (1-x)^{m-k},\]
and
\[\Delta M_{n+1} = Y_{n+1}- \mathbb E [Y_{n+1}|\mathcal F_n],\]
with
\[Y_{n+1} := a_{m-\zeta_{n+1}} - Z_n (a_{m-\zeta_{n+1}} + b_{m-\zeta_{n+1}}).\]
\end{lem}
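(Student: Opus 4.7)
The plan is to adapt the proof of Lemma~\ref{lem:algo_sto} to the one-dimensional, non-balanced setting. The starting point is the pair of elementary identities $W_{n+1} = W_n + a_{m-\zeta_{n+1}}$ and $T_{n+1} = T_n + c_{m-\zeta_{n+1}}$, with $c_k := a_k + b_k$. A direct computation then gives
\[
X_{n+1} - X_n
= \frac{W_n + a_{m-\zeta_{n+1}}}{T_{n+1}} - \frac{W_n}{T_n}
= \frac{1}{T_{n+1}}\big(a_{m-\zeta_{n+1}} - c_{m-\zeta_{n+1}} X_n\big)
= \frac{Y_{n+1}}{T_{n+1}},
\]
with $Y_{n+1}$ as defined in the statement. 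Writing $Y_{n+1} = \mathbb E[Y_{n+1}\mid \mathcal F_n] + \Delta M_{n+1}$ defines $\Delta M_{n+1}$ as a martingale increment by construction, so it only remains to identify the conditional expectation and to pin down $\varepsilon_{n+1}$.

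In the \emph{with}-replacement case, $\zeta_{n+1}$ given $\mathcal F_n$ is $\mathrm{Bin}(m, X_n)$-distributed, so
\[
\mathbb E[Y_{n+1}\mid\mathcal F_n]
= \sum_{k=0}^m \binom{m}{k} X_n^k (1-X_n)^{m-k} \big((1-X_n)a_{m-k} - X_n b_{m-k}\big)
= \tilde g(X_n),
\]
and we set $\varepsilon_{n+1}\equiv 0$. In the \emph{without}-replacement case, the law of $\zeta_{n+1}$ given $\mathcal F_n$ is hypergeometric, and the idea is to use the uniform expansion
\[
\frac{\binom{W_n}{k}\binom{T_n - W_n}{m-k}}{\binom{T_n}{m}} = \binom{m}{k} X_n^k (1-X_n)^{m-k} + \mathcal O(1/T_n)
\qquad (0\le k\le m),
\]
exactly as was done after Equation~\eqref{eq:r_n} in the $d$-colour proof, and to define $\varepsilon_{n+1}$ as the difference between $\mathbb E[Y_{n+1}\mid \mathcal F_n]$ and $\tilde g(X_n)$. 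Since the $a_{m-k}$ and $b_{m-k}$ are finitely many fixed integers and $|X_n|\le 1$, the displayed expansion immediately yields $|\varepsilon_{n+1}| = \mathcal O(1/T_n)$ almost surely, and in particular $\varepsilon_{n+1}$ is $\mathcal F_n$-measurable.

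The only remaining point is to show that $\varepsilon_{n+1}\to 0$ almost surely, which is where the non-balanced character of the urn could, in principle, cause trouble (this is the main ``obstacle'', but a mild one): here we invoke the standing assumption $\liminf_{n\to\infty} T_n/n > 0$ of Theorem~\ref{th:non-balanced}, which forces $T_n\to\infty$ a.s.\ and hence $\varepsilon_{n+1}\to 0$ a.s. Everything else in the argument is routine algebraic rearrangement, so apart from the hypergeometric-to-binomial expansion (already worked out in the $d$-colour case), no new idea is required beyond those already in the proof of Lemma~\ref{lem:algo_sto}.
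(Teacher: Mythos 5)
Your proposal is correct and follows exactly the route the paper intends: it is the one-dimensional adaptation of the proof of Lemma~\ref{lem:algo_sto} (the paper explicitly skips the proof of Lemma~\ref{lem:algo_sto2} for this reason), with the same decomposition $X_{n+1}-X_n = \frac{1}{T_{n+1}}(a_{m-\zeta_{n+1}}-c_{m-\zeta_{n+1}}X_n)$, the same martingale/remainder split, and the same hypergeometric-to-binomial expansion giving $|\varepsilon_{n+1}|=\mathcal O(\nicefrac1{T_n})$. Your observation that one must invoke $\liminf_{n\to\infty}\nicefrac{T_n}{n}>0$ (or at least $T_n\to\infty$) to conclude $\varepsilon_{n+1}\to 0$ in the non-balanced setting is a correct and worthwhile point of care.
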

The proof of this lemma is skipped since it is very similar to the proof of Lemma~\ref{lem:algo_sto}.

\subsection{Law of large numbers}
In view of Lemma~\ref{lem:algo_sto2}, one can apply the following result by Renlund~\cite{Renlund}:
\begin{thm}\label{th:renlund}
Assume that the sequence $(X_n)_{n\geq 0}$ satisfies
\begin{equation}\label{eq:algo_sto}
X_{n+1}-X_n = \gamma_{n+1}\big(f(X_n)+V_{n+1}\big),
\end{equation}
where $(\gamma_n)_{n\geq 1}$ and $(V_n)_{n\geq 1}$ are two $\mathcal F_n$-measurable sequences of random variables 
and $f$ is a continuous function from $[0,1]$ onto $\mathbb R$ such that $f(0)\geq 0$, $f(1)\leq 1$ and $f\not\equiv 0$.
Assume that, almost surely,
\[
\nicefrac{c_1}{n}\leq \gamma_n \leq \nicefrac{c_2}{n},
\qquad |V_n|\leq K_v \qquad |f(X_n)|\leq K_f, \quad \text{ and } \quad
|\mathbb E[\gamma_{n+1} V_{n+1}| \mathcal F_n]| \leq K_e \gamma_n^2,
\]
where the constants $c_1, c_2, K_v, K_f, $ and $K_e$ are positive real numbers.
Then, $\theta_\star:= \lim_{n\to +\infty} X_n$ exists almost surely, $f(\theta_\star)=0$ and $f'(\theta_\star)\leq 0$.
\end{thm}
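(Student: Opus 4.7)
The plan is a three-stage argument in the classical spirit of one-dimensional stochastic approximation: handle the noise, establish almost sure convergence of $X_n$, then identify the limit as a ``stable'' zero of $f$.

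\textbf{Stage 1 (noise decomposition).} Write $\gamma_{n+1} V_{n+1} = \Delta \mathcal M_{n+1} + b_{n+1}$ with $b_{n+1} := \mathbb E[\gamma_{n+1} V_{n+1} \mid \mathcal F_n]$ and $\Delta \mathcal M_{n+1}$ the corresponding martingale difference. The key assumption gives $|b_{n+1}| \leq K_e \gamma_n^2 = O(1/n^2)$, hence $\sum_n b_n$ converges absolutely. On the other hand, $|\Delta \mathcal M_{n+1}| \leq 2 K_v \gamma_{n+1} = O(1/n)$, so the martingale $\mathcal M_n = \sum_{k \leq n} \Delta \mathcal M_k$ has summable quadratic variation and converges almost surely.

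\textbf{Stage 2 (convergence to $\{f=0\}$).} Introduce the Lyapunov function $F(x) := -\int_0^x f(u)\, du$, which is $C^1$ on $[0,1]$ with $F'(x) f(x) = -f(x)^2 \leq 0$. Expanding
\begin{equation*}
F(X_{n+1}) - F(X_n) = -f(X_n)(X_{n+1} - X_n) + \rho_{n+1},
\end{equation*}
where $\rho_{n+1} = \int_{X_n}^{X_{n+1}} (f(X_n) - f(u))\, du$ satisfies $|\rho_{n+1}| = o(\gamma_{n+1})$ by uniform continuity of $f$ on $[0,1]$ and $|X_{n+1}-X_n| = O(\gamma_{n+1})$. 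Substituting the decomposition of Stage 1 yields
\begin{equation*}
F(X_{n+1}) - F(X_n) = -\gamma_{n+1} f(X_n)^2 - f(X_n)(\Delta \mathcal M_{n+1} + b_{n+1}) + \rho_{n+1}.
\end{equation*}
A Robbins--Siegmund-type argument then shows that $F(X_n)$ converges a.s.\ and $\sum_n \gamma_{n+1} f(X_n)^2 < \infty$. Combined with $\sum \gamma_n = \infty$ (from $\gamma_n \geq c_1/n$) and $|X_{n+1} - X_n| \to 0$, this forces $X_n$ to converge almost surely to some $\theta_\star$ with $f(\theta_\star) = 0$ (small increments prevent oscillation across distinct components of the zero set of $f$).

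\textbf{Stage 3 (stability of the limit).} I expect this to be the main obstacle. The intuitive statement is that if $f'(\theta_\star) > 0$, the deterministic drift near $\theta_\star$ pushes $X_n$ \emph{away} from $\theta_\star$: a direct computation of $\mathbb E[(X_{n+1}-\theta_\star)^2 \mid \mathcal F_n]$ shows that in a small neighbourhood of $\theta_\star$,
\begin{equation*}
\mathbb E[(X_{n+1}-\theta_\star)^2 \mid \mathcal F_n] \geq (1 + c\, \gamma_{n+1})(X_n-\theta_\star)^2 - O(\gamma_n^2),
\end{equation*}
which, via a standard ``non-convergence to unstable equilibria'' argument in the spirit of Pemantle~\cite{Pemantle}, is incompatible with convergence to $\theta_\star$. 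The subtlety is that this last step typically requires a mild non-degeneracy condition on the noise near $\theta_\star$; in the urn application this is automatic because the explicit form of $V_{n+1}$ guarantees that the martingale increments are not identically zero in any neighbourhood of a candidate limit.
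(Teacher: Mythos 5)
A preliminary remark: the paper gives no proof of Theorem~\ref{th:renlund} at all --- it is a statement quoted from Renlund~\cite{Renlund} and then applied --- so your sketch can only be assessed on its own terms. Stage 1 is correct and is the right preliminary step: it gives that $\sum_n \gamma_n V_n$ is an a.s.\ convergent series. Stage 2, however, has a real gap as written: since $f$ is only assumed continuous, the remainder satisfies $|\rho_{n+1}|\leq C\gamma_{n+1}\,\omega_f(C\gamma_{n+1})$ with $\omega_f$ the modulus of continuity, which is $o(1/n)$ but need not be summable (take $\omega_f(\delta)\sim 1/\log(1/\delta)$), so the Robbins--Siegmund step you invoke does not apply without further work (it would if $f$ were Lipschitz or H\"older). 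Moreover, even granting that $F(X_n)$ converges and $\sum_n\gamma_{n+1}f(X_n)^2<\infty$, your parenthetical reason for convergence (``small increments prevent oscillation across distinct components of the zero set'') is insufficient when $\{f=0\}$ contains an interval: the limit set of $X_n$ is then a subinterval on which $f\equiv 0$, and to exclude a non-degenerate such interval you must use the a.s.\ convergence of the noise series from Stage 1, because inside a flat piece the motion of $X_n$ is exactly a tail of that series. In fact, a direct crossing argument based solely on Stage 1 yields both the a.s.\ existence of $\theta_\star$ and $f(\theta_\star)=0$ without any Lyapunov function: if $\liminf X_n<\limsup X_n$, every intermediate point is a limit point; a subinterval on which $f$ has a strict sign would have to be crossed monotonically against the drift infinitely often, which is impossible once the noise tail is small; hence $f\equiv0$ on the interval of limit points, and crossing it infinitely often then contradicts the convergence of $\sum_n\gamma_nV_n$.

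Stage 3 is where the proposal genuinely fails relative to the statement being proved. The hypotheses as listed allow completely degenerate noise: take $V_n\equiv 0$, $\gamma_n=1/n$, $f(x)=x(1-x)(x-1/2)$ and $X_0=1/2$; every assumption holds, yet $X_n\equiv 1/2$ converges to a zero with $f'(1/2)>0$. Consequently no non-convergence-to-unstable-equilibria argument, Pemantle-style or otherwise, can deliver $f'(\theta_\star)\leq 0$ from the stated hypotheses --- such arguments require exactly the noise non-degeneracy you mention, and your appeal to it being ``automatic in the urn application'' steps outside the theorem you set out to prove (note also that $f'(\theta_\star)$ need not even exist for a merely continuous $f$). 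This is partly an imprecision in the way the paper transcribes Renlund's result, whose non-convergence statement carries additional conditions on the noise near an unstable zero; but as a proof of the statement as written, Stage 3 has a gap that cannot be closed: you must either import an explicit non-degeneracy hypothesis (and then carry out the exclusion argument with the merely random $\gamma_n$, using the two-sided bounds $c_1/n\leq\gamma_n\leq c_2/n$, in the spirit of~\cite{Pemantle}), or weaken the final claim.
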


This result allows us to prove Theorem~\ref{th:non-balanced}$(a)$:
\begin{proof}[Proof of Theorem~\ref{th:non-balanced}$(a)$]
In view of Lemma~\ref{lem:algo_sto2}, the proportion $X_n$ of white balls in the urn satisfies
\[X_{n+1} = X_n + \frac1{T_{n+1}}\big(\tilde g(X_n) + \Delta M_{n+1} + \varepsilon_{n+1}\big),\]
which is the same as Equation~\eqref{eq:algo_sto} with $f=\tilde g$,
$V_{n+1} = \Delta M_{n+1} + \varepsilon_{n+1}$, and $\gamma_{n+1} = \nicefrac1{T_{n+1}}$.
Note that, by the tenability assumption (see Lemma~\ref{lem:tenability}), 
$\tilde g(0) = a_m\geq 0$ and $\tilde g(1) = -b_0 \leq 0$ as requested in Lemma~\ref{lem:algo_sto2}.

By assumption, we have that $\liminf_{n\to \infty} \nicefrac{T_n}{n} > 0$, implying that there exists a constant $c$ such that $T_n \geq c n$ for all $n\geq 0$. If we let $C=\max_{1\leq k\leq m} c_k$, then we have almost surely
$ c n \leq T_n \leq T_0 + Cn$,
implying that there exists two constants $c_1$ and $c_2$ such that $\nicefrac{c_1}{n}\leq \gamma_n \leq \nicefrac{c_2}{n}$ 
almost surely for all $n\geq 0$.

Recall that $\Delta M_{n+1} = Y_{n+1} - \mathbb E[Y_{n+1}|\mathcal F_n]$, where $Y_{n+1} = \tilde g(X_n)$.
Note that the function $\tilde g$ is a continuous and thus bounded on $[0,1]$, 
implying that $|\Delta M_{n+1}|\leq 2 \max_{[0,1]} \tilde g$.
Also, $\varepsilon_{n+1} \to 0$ almost surely when $n$ tend to infinity, implying that this sequence is bounded. 
Thus there exists $K_v$ such that $|V_n| = |\Delta M_n + \varepsilon_n|\leq K_v$ for all $n\geq 1$.

For all $n\geq 0$, we have $|\tilde g(X_n)|\leq \max_{[0,1]} \tilde g$, and finally, 
since $T_{n+1}\geq c(n+1)$ almost surely, we have
\[|\mathbb E[\gamma_{n+1}V_{n+1}|\mathcal F_n]|
=\left|\frac1{T_n}\mathbb E\left[\Big(1-\frac{c_{\xi_{n+1}}}{T_{n+1}}\Big)\,V_{n+1}\Big|\mathcal F_n\right]\right|
=\frac{\mathbb E[|\varepsilon_{n+1}|\big|\mathcal F_n]}{T_n}
+ \frac1{T_n} \mathbb E\left[\Big|\frac{c_{\xi_{n+1}}V_{n+1}}{T_{n+1}}\Big|\,\bigg|\mathcal F_n\right],\]
where we have used that $\mathbb E [\Delta M_{n+1}|\mathcal F_n] = 0$. 
Recall that $\varepsilon_n = 0$ in the with-replacement case, 
and that $|\varepsilon_n| = \mathcal O(\nicefrac1{T_n})$ in the without-replacement case.
Also recall that, by assumption, $T_0 + cn\leq T_n\leq T_0+Cn$, implying that
there exists a constant $K_v$ such that $|\mathbb E[\gamma_{n+1}V_{n+1}|\mathcal F_n]| \leq K_v \gamma_n^2$ as requested in Lemma~\ref{lem:algo_sto2}.

We can thus apply Lemma~\ref{lem:algo_sto2}, which proves Theorem~\ref{th:non-balanced}$(a)$.
\end{proof}

\subsection{Central limit theorem}
Our aim in this section is to apply Renlund's central limit theorem for stochastic algorithms.
This result is expressed as follows: 
\begin{thm}[\cite{Renlund}]\label{th:Renlund_CLT}
Let $(X_n)_{n\geq 0}$ satisfying Equation~\eqref{eq:algo_sto} and $\theta_\star$ a stable zero of~$f$.
Conditionally on the event $\{X_n\to\theta_\star\}$,
let $\hat \gamma_n:= n\gamma_n \hat f(X_{n-1})$ where $\hat f(x) = \nicefrac{-f(x)}{x-\theta_\star}$,
and assume that $\hat \gamma_n$ converges almost surely to some deterministic limit $\hat \gamma$.
Then:
\begin{enumerate}[(i)]
\item If $\hat\gamma > \nicefrac12$ and if $\mathbb E[(n\gamma_n V_n)^2|\mathcal F_{n-1}] \to \sigma^2 > 0$,
then $\sqrt n (X_n -\theta_\star) \stackrel{n\to \infty}{\longrightarrow} \mathcal N\Big(0, \frac{\sigma^2}{2\hat\gamma -1}\Big)$, in distribution.
\item If $\hat\gamma = \nicefrac12$, $(\ln n)(\nicefrac 12 - \hat\gamma_n)\to 0$, 
and $\mathbb E[(n\gamma_n V_n)^2|\mathcal F_{n-1}] \stackrel{a.s.}{\longrightarrow} \sigma^2 > 0$ when $n\to\infty$, then
\[\sqrt{\frac n{\ln n}}(X_n -\theta_\star) \stackrel{n\to \infty}{\longrightarrow} \mathcal N(0, \sigma^2), \text{ in distribution}.\]
\end{enumerate}
\end{thm}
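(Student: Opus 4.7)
The plan is to linearise the recursion around the stable zero $\theta_\star$, reduce to a product-renormalised martingale sum, and then apply a standard martingale central limit theorem. Working on the event $\{X_n\to\theta_\star\}$, set $\Delta_n := X_n - \theta_\star$. Using $f(x) = -(x-\theta_\star)\,\hat f(x)$, the recursion~\eqref{eq:algo_sto} becomes the inhomogeneous linear recursion
\[\Delta_{n+1} = \Delta_n\Big(1 - \tfrac{\hat\gamma_{n+1}}{n+1}\Big) + \gamma_{n+1} V_{n+1},\]
whose explicit solution is $\Delta_n = \Pi_n \Delta_0 + \Pi_n \sum_{k=1}^{n} \Pi_k^{-1} \gamma_k V_k$, where $\Pi_n := \prod_{k=1}^{n}(1 - \hat\gamma_k/k)$. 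The target will be to prove a CLT for the rescaled quantity $\sqrt{n}\,\Delta_n$ in case~$(i)$, and $\sqrt{n/\ln n}\,\Delta_n$ in case~$(ii)$.

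First I would establish the asymptotics of the product $\Pi_n$. Since $\hat\gamma_k \to \hat\gamma$ almost surely, a Taylor expansion gives $\log(1-\hat\gamma_k/k) = -\hat\gamma_k/k + O(k^{-2})$, hence $\log\Pi_n = -\hat\gamma\log n + O(1)$ and $\Pi_n \sim c_\omega\, n^{-\hat\gamma}$ almost surely for some random $c_\omega>0$. Next I would split the noise as $V_k = \tilde V_k + b_k$ with $\tilde V_k := V_k - \mathbb E[V_k|\mathcal F_{k-1}]$ a martingale difference. The bias contribution is controlled by the hypothesis $|\mathbb E[\gamma_{k+1}V_{k+1}|\mathcal F_k]| \leq K_e\gamma_k^2 = O(k^{-2})$, so that $\Pi_n\sum_{k\leq n}\Pi_k^{-1}\gamma_k b_k = O\!\big(n^{-\hat\gamma}\sum_{k\leq n} k^{\hat\gamma-2}\big) = o(n^{-1/2})$ in case~$(i)$ and is similarly negligible in case~$(ii)$. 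The deterministic term $\Pi_n\Delta_0$ is $O(n^{-\hat\gamma})$ and vanishes after rescaling whenever $\hat\gamma\geq 1/2$.

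The heart of the proof is then a martingale CLT applied to $M_n := \sum_{k=1}^{n} \Pi_k^{-1}\gamma_k \tilde V_k$. Its predictable quadratic variation, rescaled by the target normalisation, gives
\[n\,\Pi_n^2 \sum_{k=1}^{n} \Pi_k^{-2}\gamma_k^2\,\mathbb E[\tilde V_k^2|\mathcal F_{k-1}] \sim n^{1-2\hat\gamma}\,\sigma^2 \sum_{k=1}^{n} k^{2\hat\gamma - 2},\]
using $\Pi_k \sim c_\omega k^{-\hat\gamma}$, the assumption $\mathbb E[(n\gamma_n V_n)^2|\mathcal F_{n-1}]\to\sigma^2$, and elementary estimates. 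In case~$(i)$ the sum is $\sim n^{2\hat\gamma-1}/(2\hat\gamma-1)$, yielding the limit variance $\sigma^2/(2\hat\gamma-1)$; in case~$(ii)$ the sum is $\sim \log n$, which is exactly what forces the $\sqrt{n/\ln n}$ rescaling and gives asymptotic variance~$\sigma^2$. The Lindeberg condition is straightforward from $|\tilde V_k|\leq 2K_v$ and $\gamma_k \leq c_2/k$: the individual terms $\sqrt{n}\,\Pi_n\Pi_k^{-1}\gamma_k\tilde V_k$ are uniformly $O(k^{-1/2})$.

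The main obstacle will be the randomness of the factors $\hat\gamma_k$ in the product $\Pi_n$. Uniform control of the asymptotic $\Pi_n \sim c_\omega n^{-\hat\gamma}$ is needed for the bracket computation above to be valid pathwise; the natural remedy is to localise on the events $\{\sup_{k\geq N}|\hat\gamma_k - \hat\gamma|\leq\varepsilon\}$ (equivalently, to stop the process at the first exit from a small neighbourhood of~$\theta_\star$), and then let $N\to\infty$ and $\varepsilon\to 0$. The critical case $\hat\gamma = 1/2$ is the most delicate precisely because an error of size $1/\log k$ in $\hat\gamma_k$ already perturbs $\log\Pi_n$ at order~$1$ and hence $\Pi_n$ at a non-negligible multiplicative scale: this is exactly why the extra hypothesis $(\ln n)(\tfrac12 - \hat\gamma_n)\to 0$ is imposed, and verifying that it suffices to recover $\Pi_n \sim c_\omega n^{-1/2}$ with the required precision is the technical heart of part~$(ii)$.
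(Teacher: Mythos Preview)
The paper does not prove Theorem~\ref{th:Renlund_CLT}; it is quoted verbatim from Renlund~\cite{Renlund} and used as a black box to deduce Theorem~\ref{th:non-balanced}$(b)$. There is therefore no proof in the paper against which to compare your attempt.

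That said, your sketch is the standard route to results of this type (and is essentially how Renlund proceeds): linearise around the zero, solve the resulting inhomogeneous linear recursion via the product $\Pi_n$, estimate $\Pi_n\sim c\,n^{-\hat\gamma}$ from $\hat\gamma_k\to\hat\gamma$, split off the predictable bias, and apply a martingale CLT to the remaining sum with a bracket computation and Lindeberg check. Your identification of the extra hypothesis $(\ln n)(\tfrac12-\hat\gamma_n)\to 0$ as precisely what is needed to control $\log\Pi_n$ in the critical case is correct. One point to be careful about: the constant $c_\omega$ in $\Pi_n\sim c_\omega n^{-\hat\gamma}$ is random, and you need to check that it cancels in the ratio $\Pi_n^2\Pi_k^{-2}$ so that the limiting variance is deterministic; this works but deserves an explicit sentence. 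Another is that the localisation argument you mention (stopping on exit from a neighbourhood of $\theta_\star$) is indeed required, since the recursion and the estimates on $\hat\gamma_k$ are only controlled conditionally on $\{X_n\to\theta_\star\}$.
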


Renlund also proves the following result:
\begin{thm}[{\cite{Renlund}}]\label{th:renlund+}
Let $(X_n)_{n\geq 0}$ be a sequence of random variables that satisfies Equation~\eqref{eq:algo_sto}, and
$\theta_\star$ a stable zero of~$f$. 
Conditionally on $\{X_n\to\theta_\star\}$, assume that there exists $\hat\gamma\in(0,\nicefrac12)$ 
such that, almost surely when $n$ tends to infinity,
\[\hat\gamma_n - \hat \gamma = \mathcal O(|X_n-\theta_\star|+\nicefrac1n).\]
Then $n^{\hat\gamma}(X_n-\theta_\star)$ converges almost surely to a finite random variable when $n$ tends to infinity.
\end{thm}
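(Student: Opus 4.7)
The plan is to invert the stochastic recursion~\eqref{eq:algo_sto} by means of an integrating factor, then separate deterministic decay from a convergent martingale series. Since $\theta_\star$ is a stable zero of $f$, the function $\hat f(x) := -f(x)/(x-\theta_\star)$ extends continuously at $\theta_\star$ with $\hat f(\theta_\star) = -f'(\theta_\star) > 0$. Using $\hat\gamma_{n+1} = (n+1)\gamma_{n+1}\hat f(X_n)$ from Theorem~\ref{th:Renlund_CLT}, the recursion rewrites as
\[X_{n+1}-\theta_\star = (X_n-\theta_\star)\Bigl(1-\frac{\hat\gamma_{n+1}}{n+1}\Bigr) + \gamma_{n+1}V_{n+1}.\]
Setting $\Pi_n := \prod_{k=1}^n(1-\hat\gamma_k/k)$ with $\Pi_0 = 1$, telescoping this linear recursion yields
\[X_n-\theta_\star = \Pi_n\biggl[(X_0-\theta_\star) + \sum_{k=0}^{n-1}\frac{\gamma_{k+1}V_{k+1}}{\Pi_{k+1}}\biggr].\]
It therefore suffices to show that $n^{\hat\gamma}\Pi_n$ converges almost surely to a positive finite limit and that the bracketed series converges almost surely.

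For the product asymptotics, I would expand $\log\Pi_n = -\sum_{k=1}^n \hat\gamma_k/k + \mathcal O(1)$ (since $\hat\gamma_k$ is eventually bounded and $\sum \hat\gamma_k^2/k^2<\infty$). Separating the constant $\hat\gamma$ and comparing with $-\hat\gamma\log n$ leaves the task of controlling the tail of $\sum_k(\hat\gamma_k-\hat\gamma)/k$. The hypothesis $\hat\gamma_k-\hat\gamma = \mathcal O(|X_k-\theta_\star|+\nicefrac1k)$ then reduces matters to proving that $\sum_k|X_k-\theta_\star|/k<\infty$ almost surely on the event $\{X_n\to\theta_\star\}$.

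This rate estimate is the main obstacle. The plan is to localise on the event $\Omega_{N,\eta} := \{|X_k-\theta_\star|<\eta,\ \forall k\geq N\}$, chosen so that $\hat f$ is bounded below by a positive constant on $[\theta_\star-\eta,\theta_\star+\eta]$, and then to square the recursion and take conditional expectations. Using the uniform bounds $|V_n|\leq K_v$ and $|\mathbb E[\gamma_{n+1}V_{n+1}|\mathcal F_n]|\leq K_e\gamma_n^2$ from Theorem~\ref{th:renlund}, one obtains on $\Omega_{N,\eta}$ a recursive inequality of Lyapunov type
\[\mathbb E\bigl[(X_{n+1}-\theta_\star)^2\mathbf 1_{\Omega_{N,\eta}}\,|\,\mathcal F_n\bigr] \leq \Bigl(1-\tfrac{2\hat\gamma}{n+1}+\mathcal O(n^{-2})\Bigr)(X_n-\theta_\star)^2\mathbf 1_{\Omega_{N,\eta}} + \mathcal O(n^{-2}),\]
whose standard iteration yields $\mathbb E[(X_n-\theta_\star)^2\mathbf 1_{\Omega_{N,\eta}}] = \mathcal O(n^{-2\hat\gamma})$ (precisely because $2\hat\gamma<1$). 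Fubini and Cauchy--Schwarz then give $\mathbb E\sum_k k^{-1}|X_k-\theta_\star|\mathbf 1_{\Omega_{N,\eta}} \leq \sum_k k^{-1-\hat\gamma}<\infty$, so $\sum_k k^{-1}|X_k-\theta_\star|<\infty$ almost surely on each $\Omega_{N,\eta}$. Since the event $\{X_n\to\theta_\star\}$ is covered, up to a null set, by the countable union $\bigcup_{N\in\mathbb N,\,\eta\in\mathbb Q_+}\Omega_{N,\eta}$, the summability holds almost surely on the conditioning event.

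Finally, the bracketed series $\sum_k \gamma_{k+1}V_{k+1}/\Pi_{k+1}$ is split into a martingale plus a summable drift. Writing $V_{k+1} = U_{k+1}+\mathbb E[V_{k+1}|\mathcal F_k]$ with $U_{k+1}$ a bounded martingale increment, the martingale $\sum_k\gamma_{k+1}U_{k+1}/\Pi_{k+1}$ has predictable quadratic variation of order $\sum_k k^{2\hat\gamma-2}<\infty$ (using $\gamma_k\asymp 1/k$ and $\Pi_k\asymp k^{-\hat\gamma}$ from the previous step), so it converges almost surely by the $L^2$ martingale convergence theorem. The drift is dominated termwise by $|\mathbb E[\gamma_{k+1}V_{k+1}|\mathcal F_k]|/|\Pi_{k+1}| = \mathcal O(k^{\hat\gamma-2})$, which is summable. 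Combining the product asymptotics with the almost sure convergence of the bracketed series, $n^{\hat\gamma}(X_n-\theta_\star)$ converges almost surely to the finite random variable $\bigl(\lim_n n^{\hat\gamma}\Pi_n\bigr)\cdot\bigl[(X_0-\theta_\star)+\sum_{k\geq 0}\gamma_{k+1}V_{k+1}/\Pi_{k+1}\bigr]$, as required.
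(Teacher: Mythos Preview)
The paper does not prove this statement at all: Theorem~\ref{th:renlund+} is quoted from Renlund~\cite{Renlund} and used only as a black box (see the sentence immediately following it, and the discussion after the proof of Theorem~\ref{th:non-balanced}$(b)$). There is therefore no ``paper's own proof'' to compare your attempt against.

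Your sketch is the natural route and is broadly correct: linearise via $\hat\gamma_{n+1}$, telescope with the product $\Pi_n$, show $n^{\hat\gamma}\Pi_n$ has a finite positive limit by reducing to $\sum_k|X_k-\theta_\star|/k<\infty$ via an $L^2$/Lyapunov bound, and finish with martingale convergence plus a summable drift. Two technical points deserve care if you write this out in full. First, $\gamma_{k+1}$ and hence $\Pi_{k+1}$ are only $\mathcal F_{k+1}$-measurable, so $\gamma_{k+1}U_{k+1}/\Pi_{k+1}$ is \emph{not} a martingale increment as written; the clean fix is to decompose $\gamma_{k+1}V_{k+1}$ itself (which is exactly what the hypothesis $|\mathbb E[\gamma_{k+1}V_{k+1}\mid\mathcal F_k]|\leq K_e\gamma_k^2$ is designed for), divide by the $\mathcal F_k$-measurable $\Pi_k$, and absorb the $O(1/k)$ discrepancy between $\Pi_k$ and $\Pi_{k+1}$ into the summable remainder. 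Second, the event $\Omega_{N,\eta}$ is not $\mathcal F_n$-measurable, so the conditional-expectation recursion should be run for the stopped process $X_{n\wedge\tau}$ with $\tau=\inf\{k\geq N:|X_k-\theta_\star|\geq\eta\}$ rather than with the indicator $\mathbf 1_{\Omega_{N,\eta}}$ inside. Neither point affects the structure of your argument.
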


Theorem~\ref{th:renlund}$(a)$ applies to our framework and allows us to prove Theorem~\ref{th:non-balanced}$(b)$:
\begin{proof}[Proof of Theorem~\ref{th:non-balanced}$(b)$]
The idea of the proof is to apply Renlund's result to the stochastic algorithm of Lemma~\ref{lem:algo_sto2}.
In that case,
$\hat \gamma_n = \frac{n}{T_n} \frac{\tilde g(X_{n-1})}{\theta_\star-X_{n-1}}$, $f=\tilde g$ and $V_n = \Delta M_n + \varepsilon_n$.
First note that, since $\tilde g$ is a polynomial, it is in particular differentiable in $\theta_\star$, and thus, conditionally on $\{X_n\to\theta_\star\}$, we have
\[\frac{\tilde g(X_{n-1})}{\theta_\star-X_{n-1}} \to -\tilde g'(\theta_\star).\]
Moreover, Lemma~\ref{lem:cv_Tn} gives that $\hat \gamma_n \to \nicefrac{-\tilde g'(\theta_\star)}{\omega}=:\lambda$.
Let us now have look at 
\[\mathbb E[(n \gamma_n V_n)^2 |\mathcal F_{n-1}] = \mathbb E\Big[\Big(\frac{n (\Delta M_n+\varepsilon_n)}{T_n}\Big)^2 \Big|\mathcal F_{n-1}\Big].\]
Note that $\nicefrac {T_n} n \to \omega$ almost surely, $\varepsilon_n \to 0$ almost surely, 
and $\Delta M_n = Y_{n+1}-\mathbb E[Y_{n+1}|\mathcal F_{n-1}]$.
A simple calculation (using $\tilde g(\theta_\star) = 0$) then leads to
\[\mathbb E\Big[\Big(\frac{n (\Delta M_n+\varepsilon_n)}{T_n}\Big)^2 \Big|\mathcal F_{n-1}\Big] \to \frac1{\omega^2} H(\theta_\star),\]
almost surely when $n$ goes to infinity.
We can thus apply Theorem~\ref{th:Renlund_CLT}, which concludes the proof of Theorem~\ref{th:non-balanced}$(b)$.
\end{proof}

As one can now understand, we need information about the speed of convergence of $\nicefrac{T_n}{n}$ to its limit in order to solve the two remaining cases: $\lambda = \nicefrac12$ and $\lambda <\nicefrac12$. And, in view of Theorem~\ref{th:renlund+}, it would be enough to prove that
\[\bigg|\frac{T_n}{n}-\omega\bigg| = \mathcal O\big(|Z_n-\theta_\star| + \nicefrac1n\big).\]
Unfortunately, we are not yet able to prove this statement.

\subsection{Examples}
Theorem~\ref{th:non-balanced} already permits to treat some particular cases of non-balanced P\'olya urn processes.
We give in this section examples to which our result applies, but also some examples that do not fall into our framework, either because $\lambda\leq \nicefrac12$ or because $\sigma^2 = 0$.

Note that in all examples given below, $\min_{0\leq k\leq m} c_k \geq 1$ (recall that $c_k = a_k+b_k$), 
implying in particular that $\liminf_{n\to\infty} \nicefrac{T_n}{n}>0$, as required in Theorem~\ref{th:non-balanced}. 

{\bf Example 5.1:~}
Take
\[R = \begin{pmatrix}
2 & 1\\
1 & 1\\
1 & 2
\end{pmatrix}.\]
In that case, $\tilde g(x) = (1-x)^3 - x^3$, 
and this function admits a unique zero $\theta_\star = \nicefrac12$.
This particular case thus falls under Theorem~\ref{th:non-balanced} and $X_n$ converges almost surely to $\nicefrac12$.
We also have $\omega = \nicefrac52$, and $\tilde g'(\nicefrac12) = -\nicefrac32$, implying that $\lambda = \nicefrac35>\nicefrac12$.
Finally, note that $H(\nicefrac12)=\nicefrac18$,
$\sigma^2= \nicefrac1{50}$, and $2\lambda-1 = \nicefrac15$ meaning that, in distribution,
\[\sqrt n(X_n-\nicefrac12) \stackrel{n\to\infty}{\longrightarrow} \mathcal N(0,\nicefrac1{10}).\]

{\bf Example 5.2:} Let $a$ and $b$ be two distinct integers and let $a_{m-k} = a (m-k)$ and $b_{m-k} = b k$ for all $0\leq k\leq m$. 
This example is studied in~\cite{ALS}, where it is proved that $X_n$ converges almost surely to $\frac{\sqrt{a}}{\sqrt{a}+\sqrt b}$, and that, almost surely,
\begin{equation}\label{eq:als}
\sqrt n\Big(X_n - \frac{\sqrt a}{\sqrt a +\sqrt b}\Big) \stackrel{n\to\infty}{\longrightarrow} 
\mathcal N\Big(0,\frac{\sqrt{ab}}{3m(\sqrt a+\sqrt b)^2}\Big).
\end{equation}
This result can be re-proved as a corollary of our main results.
One can check that
\[\tilde g(x) = m[a(1-x)^2-bx^2],\]
whose unique root in $[0,1]$ is $\theta_\star:=\frac{\sqrt a}{\sqrt a + \sqrt b}$.
Thus theorem~\ref{th:non-balanced}$(a)$ gives the almost sure convergence of $Z_{\infty}$ to $\theta_\star$.
Moreover,
$\tilde g'(\theta_\star) = -2m\sqrt{ab}$,
and
$\omega = m\sqrt{ab}$,
implying that $\lambda = 2$.
Finally,
\[H(\theta_\star) = \frac{m(ab)^{\nicefrac32}}{(\sqrt a+\sqrt b)^2},
\quad\sigma^2 = \frac{\sqrt{ab}}{m(\sqrt a+\sqrt b)^2} 
\quad\text{ and }\quad
2\lambda-1 = 3,\]
which, applying Theorem~\ref{th:non-balanced}$(b)$, gives Equation~\eqref{eq:als}.

\medskip
{\bf Example 5.3:}
Let $a$ and $b$ be two distinct integers and let $a_{m-k} = a k$ and $b_{m-k} = b(m-k)$ for all $0\leq k\leq m$. 
This example is studied in~\cite{ALS}, where it is proved that, if $a<b$ (resp. $a>b$) $X_n$ converges almost surely to $0$ (resp. 1) when $n$ tends to infinity, and that, almost surely
\[n^{-\nicefrac a b} X_n \to \Psi,\]
where $\Psi$ is an absolutely continuous random variable (which first and second moments are calculated).

Let us try to apply our results: one can satisfy that
\[\tilde g(x) = m(a-b)x(1-x).\]
This polynomial admits two zeros, $0$ and $1$.
Note that $\tilde g'(0) = -\tilde g'(1) = m(a-b)$.
Thus, by Theorem~\ref{th:non-balanced}$(a)$, if $a<b$, then $X_n$ converges almost surely to 0, and if $a>b$, $X_n$ converges almost surely to one.
In both cases,
\[\omega = \max (a,b) m,\]
and thus, $\lambda = \frac{|a-b|}{\max (a,b)}$.
Also note that in both cases, $H(\theta_\star) = 0$, which means that $\sigma^2 = 0$.
Even in the cases when $\lambda > \nicefrac12$, 
this example does not fall into our framework since $\sigma^2 = 0$.

{\bf Acknowledgements: }
We are very grateful to Rafik Aguech for making this collaboration possible, and for carefully proof-reading the manuscript.
A preliminary version of Theorem~\ref{th:main}$(a)$ was inaccurate in a previous version of this article; 
we warmly thank Beno{\^i}t Laslier and Jean-Fran{\c c}ois Laslier for noticing it.
CM would also like to thank the EPSRC for support through the grant EP/K016075/1.

\bibliographystyle{apt}
\bibliography{urns.bib}

\begin{thebibliography}{10}

\bibitem{ALS}
{\sc Aguech, R., Lassmar, N. and Selmi, O.}
\newblock A generalized urn model with multiple drawing and random addition.
\newblock (preprint).

\bibitem{AK}
{\sc Athreya, K.~B. and Karlin, S.} (1968).
\newblock Embedding of urn schemes into continuous time {M}arkov branching
  processes and related limit theorems.
\newblock {\em Annals of Mathematical Statistics\/} {\bf 39,} 1801--1817.

\bibitem{BDM}
{\sc Bose, A., Dasgupta, A. and Maulik, K.} (2009).
\newblock Strong laws for balanced triangular urns.
\newblock {\em Journal of Applied Probability\/} {\bf 46,} 571--584.

\bibitem{CK}
{\sc Chen, M.~R. and Kuba, M.} (2013).
\newblock On generalized {P}{\'o}lya urn models.
\newblock {\em Journal of Applied Probabilty Theory\/} {\bf 50,} 909--1216.

\bibitem{CW}
{\sc Chen, M.~R. and Wei, C.~Z.} (2005).
\newblock A new urn model.
\newblock {\em Journal of Applied Probability\/} {\bf 42,}.

\bibitem{Duflo}
{\sc Duflo, M.} (1997).
\newblock {\em Random Iterative Models}.
\newblock Springer-Verlag Berlin.

\bibitem{EP23}
{\sc Eggenberger, F. and P{\'o}lya, G.} (1923).
\newblock {\"U}ber die statistik verketetter vorg{\"a}ge.
\newblock {\em Zeitschrift f{\"u}r Angewandte Mathematik und Mechanik\/} {\bf
  1,} 279--289.

\bibitem{FDP}
{\sc Flajolet, P., Dumas, P. and Puyhaubert, V.} (2006).
\newblock Some exactly solvable models of urn process theory.
\newblock {\em DMTCS Proceedings\/} {\bf AG,} 59--118.

\bibitem{Janson04}
{\sc Janson, S.} (2004).
\newblock Functional limit theorem for multitype branching processes and
  generalized {P}\'olya urns.
\newblock {\em Stochastic Processes and their Applications\/} {\bf 110,}
  177--245.

\bibitem{Janson06}
{\sc Janson, S.} (2006).
\newblock Limit theorems for triangular urn schemes.
\newblock {\em Probability Theory and Related Fields\/} {\bf 134,} 417--452.

\bibitem{KMP}
{\sc Kuba, M., Mahmoud, H. and Panholzer, A.} (2013).
\newblock Analysis of a generalized {F}riedman's urn with multiple drawings.
\newblock {\em Discrete Applied Mathematics\/} {\bf 161,} 2968--2984.

\bibitem{KM}
{\sc Kuba, M. and Mahmoud, H.~M.} (2017).
\newblock Two-colour balanced affine urn models with multiple drawings.
\newblock {\em Advances in Applied Mathematics\/} {\bf 90,} 1--26.

\bibitem{KS}
{\sc Kuba, M. and Sulzbach, H.} (2016).
\newblock On martingale tail sums in affine two-color urn models with multiple
  drawings.
\newblock {\em Journal of Applied Probability\/}.
\newblock to appear.

\bibitem{LP}
{\sc Laruelle, S. and Pag\`es, G.} (2013).
\newblock Randomized urn models revisited using stochastic approximation.
\newblock {\em Annals of Applied Probability\/} {\bf 23,} 1409--1436.

\bibitem{LL}
{\sc Laslier, B. and Laslier, J.-F.} (2013).
\newblock Reinforcement learning from comparisons: Three alternatives is
  enough, two is not.
\newblock \texttt{ArXiV:1301.5734}.

\bibitem{Basile}
{\sc Morcrette, B.} (2012).
\newblock Fully analyzing an algebraic {P}\'olya urn model.
\newblock In {\em Latin American Symposium on Theoretical Informatics (LATIN)}.
\newblock vol.~7256.
\newblock LNCS.
\newblock pp.~568--581.

\bibitem{MM}
{\sc Morcrette, B. and Mahmoud, H.} (2012).
\newblock Exactly solvable balanced tenable urns with random entries via the
  analytic methodology.
\newblock In {\em 23rd International Meeting on Probabilistic, Combinatorial,
  and Asymptotic Methods for the Analysis of Algorithms (AofA)}.
\newblock vol.~AQ.
\newblock DMTCS.
\newblock pp.~219--232.

\bibitem{Pemantle}
{\sc Pemantle, R.} (2007).
\newblock A survey of random processes with reinforcement.
\newblock {\em Probability surveys\/} {\bf 4,} 25.

\bibitem{Renlund}
{\sc Renlund, H.} (2011).
\newblock Limit theorems for stochastic approximation algorithms.
\newblock \texttt{ArXiV:1102.4741v1}.

\bibitem{Zhang16}
{\sc Zhang, L.-X.} (2016).
\newblock Central limit theorems of a recursive stochastic algorithm with
  applications to adaptive design.
\newblock {\em Annals of Applied Probability\/} {\bf 26,} 3630--3658.

\end{thebibliography}
\end{document}